\newcommand{\ds}{\displaystyle}
\newcommand{\pa}{\partial}
\newcommand{\Om}{\Omega}
\newcommand{\om}{\omega}
\newcommand{\la}{\lambda}
\newcommand{\La}{\Lambda}
\newcommand{\va}{\varepsilon}
\newcommand{\be}{\beta}
\newcommand{\al}{\alpha}
\theoremstyle{plain}
\newtheorem{theorem}{Theorem}[section]
\newtheorem{lemma}[theorem]{Lemma}
\newtheorem{proposition}[theorem]{Proposition}
\newtheorem{corollary}[theorem]{Corollary}
\newif \ifLastSection \LastSectionfalse
\numberwithin{equation}{section}
\newcommand{\R}{{\mathbb R}}
\newcommand{\D}{D^{1,2}(\R^N)}
\newcommand{\N}{\R^N}
\newcommand{\s}{\frac{2(N-1)}{N-2}}
\newcommand{\U}{U_{\eta^1,\la_1}}
\newcommand{\V}{U_{\eta^2,\la_2}}
\newcommand{\Ui}{U_{\eta^i,\la_i}}
\newcommand{\Uj}{U_{\eta^j,\la_j}}
\newcommand{\bz}{\bar{z}}
\newcommand{\bx}{\bar{x}}
\title{ {\bf On the Webster Scalar Curvature Problem on the CR Sphere with a Cylindrical-type Symmetry}}
\author{Daomin Cao \\
\small Institute of Applied Mathematics, AMSS\\
 \small  Chinese Academy of Sciences\\
 \small Beijing, 100080, P. R. China\\
 \small  dmcao@amt.ac.cn
\and
Shuangjie Peng\\
 \small School of Mathematics and Statistics \\
 \small Central China Normal University\\
 \small Wuhan, 430079, P. R. China\\
 \small  sjpeng@mail.ccnu.edu.cn\\
 \and
Shusen Yan\\
 \small School of Mathematics, Statistics and Computer
Science,\\
\small The University of New England, Armidale, NSW 2351,
Australia\\
 \small syan@turing.une.edu.au}
\date{}
\begin{document}
\baselineskip=20pt

\maketitle
\begin{abstract} By  variational methods, for a kind of Webster scalar curvature
problems on the CR sphere with  cylindrically symmetric curvature,
we construct some multi-peak solutions  as the parameter is
sufficiently small under certain assumptions. We also obtain the
asymptotic behaviors of the solutions.
\end{abstract}

{\bf Keywords:} \,\, Webster scalar curvature;  variational method;
critical point; concentrating solutions.

{\bf Mathematics Subject Classification:}\,\, 35J20, 35H20, 35J60,
43A80

\section{Introduction and main results}
The Webster scalar curvature problem on the CR sphere can be
briefly discussed below. Let $\theta_0$ be the standard contact
form of the CR manifold $\mathbb{S}^{2n+1}$. Given a smooth
function $\bar{\Phi}$ on $\mathbb{S}^{2n+1}$, the Webster scalar
curvature problem on $\mathbb{S}^{2n+1}$ consists in finding a
contact form $\theta$ conformal to $\theta_0$ such that the
corresponding Webster scalar curvature is $\bar{\Phi}$. This
problem is equivalent to solve the following equation
\begin{equation}\label{W}
b_n\Delta_{\theta_0}v({\vartheta})+c_nv({\vartheta})=\bar{\Phi}(\vartheta)v(\vartheta)^{b_n-1},\,\,\,\,\vartheta\in
 \mathbb{S}^{2n+1},
\end{equation}
where $b_n=2+2/n$, $\Delta_{\theta_0}$ is the sub-Laplacian on $(
\mathbb{S}^{2n+1}, \theta_0)$ and $c_n=n(n+1)/2$ is the Webster
scalar curvature of $( \mathbb{S}^{2n+1}, \theta_0)$. If $v>0$
solves \eqref{W}, then $(\mathbb{S}^{2n+1}, v^{2/n}\theta_0)$ has
the Webster scalar curvature $\bar{\Phi}$. We refer to \cite{Je}
for a more detailed presentation for this problem.

Using the Heisenberg group $\mathbb{H}^n$ and the  CR equivalence
$F:\,\,\mathbb{S}^{2n+1}\setminus\{0,\cdot\cdot\cdot,-1\}\to
\mathbb{H}^n$,
\begin{equation}\label{H}
F(\vartheta_1,\cdot\cdot\cdot,\vartheta_{n+1})=\Bigl(\frac{\vartheta_1}{1+\vartheta_{n+1}},
\cdot\cdot\cdot,\frac{\vartheta_n}{1+\vartheta_{n+1}},
Re\bigl(i\frac{1-\vartheta_{n+1}}{1+\vartheta_{n+1}}\Bigl)\Bigl),
\end{equation}
Equation \eqref{W} becomes, up to an uninfluent constant,
\begin{equation}\label{P}
-\Delta_{\mathbb{H}^n}u(\zeta)=\Phi(\zeta)u(\zeta)^{\frac{Q+2}{Q-2}},\,\,\,\zeta\in
\mathbb{H}^n.
\end{equation}
Here $\Delta_{\mathbb{H}^n}$ is the Heisenberg sub-Laplacian,
$Q=2n+2$ is the homogeneous dimension of $\mathbb{H}^n$ and $\Phi$
corresponds to $\bar{\Phi}$ in the equivalence $F$.

In this paper, we shall give some existence results for the
concentration solutions to problem \eqref{W}  or \eqref{P}, under
suitable assumption on the prescribed  curvatures. In particular,
we shall mainly assume that the prescribed curvature $\Phi$ has a
natural symmetry, namely a cylindrical-type symmetry.

The Yamabe problem on CR manifolds has been extensively investigated
and  many interesting  results have been obtained, we can refer to
\cite{Ga1,Ga2,Gar, Je2,Je3}. On the contrary, concerning the Webster
scalar curvature problem, there are very few results established. In
recent years, there has been a growing interest on equations of the
same kind of \eqref{W} or \eqref{P} and various existence and
non-existence results inspired by this topic have been established
by several authors, for example, we can refer to
\cite{Bi1,Bi2,Ci,Gar,La,Ug} and the references therein. However,
these results are quite different in nature from the results we
shall prove in this paper and do not apply directly to the Webster
scalar curvature problem. Recently, in \cite{Ma}, Malchiodi and
Uguzzoni obtained an interesting result for problem \eqref{P} in the
perturbative case, i.e., when $\Phi$ is assumed to be a small
perturbation of a constant.

The aim of this paper is to study a natural case that problem
\eqref{P} has cylindrical curvatures $\Phi(Z,t)=\Phi(|Z|,t)$,
which correspond on $\mathbb{S}^{2n+1}$ to curvatures $\bar{\Phi}$
depending only on the last complex variable of
$\mathbb{S}^{2n+1}\subset \mathbb{C}^{n+1}$. Concerning this case,
in \cite{Fe} and \cite{Ca}, via the abstract Ambrosetti-Badiale
finite dimensional reduction method \cite{Am1,Am2}, some results
analogous to those found in \cite{Am3} in the Riemannian setting
were verified in the CR setting. However, we should point out
that, by  Corollary 1.3 below, the solutions found in \cite{Fe}
and \cite{Ca} are closed to the manifold
$\{V_{s,\la}(Z,t):\,\,\la>0,\,s\in\R\}$ (for the definition, see
\eqref{M}) and do not have the concentration properties. In the
present paper, we will construct some solutions which concentrate
on some maximum points of the prescribed curvature as some
parameter varies. In particular, our restriction on the prescribed
curvature is totally different from that in \cite{Fe} or
\cite{Ca}, more precisely, we only need to impose some kind of
flatness condition on each local maximum point of the prescribed
curvature.

To state our main results, we first give some notations.

Let us denote a point in
$\mathbb{H}^n=\mathbb{C}^n\times\R=\R^{2n}\times\R$ by
$\zeta=(Z,t),\,Z=x+iy$, and by
$$
\rho(\zeta)=(|Z|^4+t^2)^{\frac{1}{4}}
$$
the homogeneous norm in $\mathbb{H}^n$. In the sequel, we shall
always suppose that $\Phi$ is continuous and bounded on
$\mathbb{H}^n$, and $\Phi$ has cylindrical symmetry, i.e.
$\Phi(Z,t)=\Phi(|Z|,t)$. Define the space of cylindrically
symmetric functions of Folland-Stein Sobolev space
$S_0^1(\mathbb{H}^n)$, namely
$$
S^1_{cy}(\mathbb{H}^n)=\{u\in
S_0^1(\mathbb{H}^n):\,\,u(Z,t)=u(|Z|,t)\},
$$
where $S_0^1(\mathbb{H}^n)$ is defined as the completion of
$C_0^\infty(\mathbb{H}^n)$ with respect to the norm
$$
\|u\|^2_{S_0^1(\mathbb{H}^n)}=\int_{\mathbb{H}^n}|\nabla_{\mathbb{H}^n}u|^2dZdt.
$$
Let us observe that $S^1_{cy}(\mathbb{H}^n)$ is a Hilbert space
endowed with the scalar product $\langle u,v
\rangle=\int_{\mathbb{H}^n}\nabla_{\mathbb{H}^n}u\cdot
\nabla_{\mathbb{H}^n}v dV_{\mathbb{H}^n}$. It is known (see
\cite{Je3}) that all the positive cylindrical symmetric solutions in
$S_0^1(\mathbb{H}^n)$ to the problem
\begin{equation}\label{L1}
-\Delta_{\mathbb{H}^n}V=V^{\frac{Q+2}{Q-2}},\,\,\,V\in
S^1_0(\mathbb{H}^n)
\end{equation}
are of the form
\begin{equation}\label{M}
V_{s,\la}(Z,t)=c_0\la^{\frac{Q-2}{2}}V_0\Bigl(\la
|Z|,\la^2(t-s)\Bigl),
\end{equation}
where $\la>0$, $s\in\R$, $c_0$ is a suitable positive constant, and
$$
V_0(|Z|,t)=\Bigl(\frac{1}{(1+|Z|^2)^2+t^2}\Bigl)^{\frac{Q-2}{4}}.
$$

We first deal with problem \eqref{P} in the case in which $\Phi$ is
closed to a constant, namely, the perturbation problem
\begin{equation}\label{P1}
-\Delta_{\mathbb{H}^n}u(Z,t)=(1+\va
K(Z,t))u(Z,t)^{\frac{Q+2}{Q-2}},\,\,\,(Z,t)\in \mathbb{H}^n,
\end{equation}
where $\va>0$ is a small parameter, $K(Z,t)=K(|Z|,t)$ is a bounded
cylindrical function on $\mathbb{H}^n$ and satisfies that for some
$\delta>0$
\begin{equation}\label{K1}
\begin{array}{ll}
K(Z,t)=&K(0,\bar{t})+\xi|Z|^{2\gamma}+a|t-\bar{t}|^{\gamma}
+O((|Z|^2,t)-(0,\bar{t})|^{\gamma+\sigma}),\vspace{0.2cm}\\
&(Z,t)\in\{(Z,t)
\mathbb{H}^n\,:\,\rho((Z,t)-(0,\bar{t}))<\delta\,\},
\end{array}
\end{equation}
where $\xi,\,a$, $\gamma$ and $\sigma$ are some constants
depending on $\bar{t}$, $\xi<0,\,a<0$, $\gamma\in (1,\,n)$ and
$\sigma\in (0,\,1)$.

On \eqref{P1}, we have
\begin{theorem}Suppose that $n>1$ and  $K(Z,t)$ satisfies \eqref{K1} in the neighborhood of $(0,\bar{t}^1),\,(0,\bar{t}^2)$, $(\bar{t}^1\neq
\bar{t}^2)$. Then there exists  $\va_0>0$ such that for
 $\va\in(0,\va_0)$, \eqref{P1} has a solution in $S^1_{cy}(\mathbb{H}^n)$ of the form
$$
u_\varepsilon(Z,t)=\sum\limits_{j=1}^2
V_{s^j_\va,\lambda_{\va,j}}(Z,t)+v_\varepsilon(Z,t)
$$
with $\lambda_{\va,j} \to +\infty$, $s^j_\va\to \bar{t}^j$
\,($j=1,2$) and $v_{\va}\in S^1_{cy}(\mathbb{H}^n)$,
$\|v_\va\|_{S_0^1(\mathbb{H}^n)}\to 0$ as $\va\to 0$.
\end{theorem}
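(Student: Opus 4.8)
The plan is to use a Lyapunov--Schmidt finite-dimensional reduction in the Hilbert space $S^1_{cy}(\mathbb{H}^n)$, built around a two-bubble ansatz. First I would fix the approximate solution $W_{\va}=V_{s^1,\la_1}+V_{s^2,\la_2}$, where the four parameters $(s^1,\la_1,s^2,\la_2)$ range over a suitable region: $s^j$ close to $\bar t^j$, and $\la_j$ large, say $\la_j=\mu_j\va^{-1/(\gamma-1)}$ (or a comparable power dictated by \eqref{K1}) with $\mu_j$ in a compact subset of $(0,\infty)$. Because $\bar t^1\neq\bar t^2$, the two bubbles are concentrated at different points of the $t$-axis, so their interaction is of lower order; this is what allows the energy expansion to decouple into a sum of single-bubble contributions plus a small cross term. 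The energy functional is
$$
I_\va(u)=\frac12\int_{\mathbb{H}^n}|\nabla_{\mathbb{H}^n}u|^2\,dZ\,dt-\frac{Q-2}{2Q}\int_{\mathbb{H}^n}(1+\va K)\,|u|^{\frac{2Q}{Q-2}}\,dZ\,dt,
$$
restricted to $S^1_{cy}(\mathbb{H}^n)$, and its critical points in that space are, by the principle of symmetric criticality, genuine cylindrically symmetric solutions of \eqref{P1}.

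Next I would carry out the reduction. Let $E_{s,\la}$ denote the subspace of $S^1_{cy}$ spanned by the derivatives of $W_\va$ with respect to the four parameters $(s^1,\la_1,s^2,\la_2)$, and let $E_{s,\la}^{\perp}$ be its orthogonal complement. The key analytic step is to show that the linearized operator of $I_\va$ at $W_\va$, projected onto $E_{s,\la}^\perp$, is invertible with inverse bounded uniformly in the parameter range and in $\va$; this rests on the nondegeneracy of the model bubbles $V_{s,\la}$ as solutions of \eqref{L1} within the cylindrically symmetric class (the only zero modes being the parameter derivatives, which we have projected out). With this in hand, a contraction mapping / implicit function argument produces, for each admissible $(s^1,\la_1,s^2,\la_2)$, a unique small $v_\va=v_\va(s,\la)\in E_{s,\la}^\perp$ with $\|v_\va\|_{S^1_0}\to 0$ such that
$$
I_\va'\bigl(W_\va+v_\va\bigr)\in E_{s,\la},
$$
and $v_\va$ depends smoothly on the parameters. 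One then shows that $u=W_\va+v_\va$ is a genuine critical point of $I_\va$ if and only if $(s,\la)$ is a critical point of the reduced functional $\mathcal{J}_\va(s,\la):=I_\va\bigl(W_\va+v_\va(s,\la)\bigr)$.

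Then I would expand $\mathcal{J}_\va$. Using \eqref{K1} at each $\bar t^j$ together with the explicit form \eqref{M} of $V_{s,\la}$, a direct (if lengthy) computation gives, for a suitable normalization of the powers of $\va$ and $\la_j$,
$$
\mathcal{J}_\va(s,\la)=2\,A_0+\sum_{j=1}^2\Bigl(-B_0\,\va\,\xi_j\,\la_j^{-2\gamma}\,|Z|\text{-term}-B_1\,\va\,a_j\,\la_j^{-2\gamma}-\dots\Bigr)+o(\text{leading}),
$$
where $A_0,B_0,B_1>0$ are universal constants coming from integrals of $V_0$, the ellipsis collects the $s^j$-dependence and lower-order errors, and the scaling is chosen so that the two terms in each summand balance. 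The conditions $\xi_j<0$, $a_j<0$ and $\gamma\in(1,n)$, $\sigma\in(0,1)$ are exactly what make this leading-order function have a strict local maximum in $(\mu_1,\mu_2)$ and in $(s^1,s^2)$ near $(\bar t^1,\bar t^2)$, which is stable under the $o(\cdot)$ perturbation; hence $\mathcal{J}_\va$ has a critical point in the interior of the parameter region. Tracking back, this yields $\la_{\va,j}\to+\infty$ and $s^j_\va\to\bar t^j$, completing the proof.

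\medskip
\noindent\emph{Main obstacle.} The delicate point is the uniform invertibility of the projected linearized operator as $\la_j\to\infty$: one must rule out small eigenvalues by a concentration-compactness / blow-up argument adapted to the Heisenberg setting, carefully using the cylindrical symmetry to eliminate the horizontal-translation and extra dilation zero modes that would otherwise obstruct invertibility, and controlling the bubble interaction term — which, although exponentially or polynomially small because $\bar t^1\neq \bar t^2$, must be shown not to interfere with the energy expansion at the scale at which the curvature terms appear.
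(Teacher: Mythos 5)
Your overall framework (two-bubble ansatz, orthogonality constraints, uniform invertibility of the projected linearization via nondegeneracy of the bubble, reduction to a four-parameter functional) is the same as the paper's, up to one difference of route: the paper does not work directly on $\mathbb{H}^n$ but first converts \eqref{P1}, through $v(|Z|,t)=u(\sqrt{|Z|},t)$, into a Hardy--Sobolev equation $-\Delta v=\phi\,v^{N/(N-2)}/|y|$ on $\R^{n+1}\times\R$ (Proposition 2.1) and does all estimates there. That is harmless. The genuine gap is in your last step, where you locate the critical point of the reduced functional.

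First, your displayed expansion of $\mathcal{J}_\va$ omits the bubble-interaction term, which is not a negligible cross term but the term that creates the critical point. To leading order the reduced energy is
$$
\mathcal{J}_\va(\eta,\la)\;\approx\;\mathrm{const}+c_1\va\la_1^{-\gamma}+c_2\va\la_2^{-\gamma}-D\,\va_{12},\qquad \va_{12}=\bigl(\la_1\la_2|\eta^1-\eta^2|^2\bigr)^{-(N-2)/2},
$$
with $c_j>0$ (because $\xi,a<0$ and $\pi_1,\pi_2<0$) and $D>0$. Without $-D\va_{12}$ each summand is strictly monotone in $\la_j$ and there is no interior critical point at all; this is precisely the paper's Corollary 1.3 on the nonexistence of single-peaked solutions. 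Balancing the two terms is what fixes the scale $\la_j\sim\va^{-1/(N-2-\gamma)}$ (not $\va^{-1/(\gamma-1)}$), and this only makes sense because $\gamma<n=N-2$. Second, even after the interaction is included, the critical point is not a local maximum: along the diagonal $\la_1=\la_2$ the function above has a strict maximum, but on a slice $\la_1\la_2=P$ (where $\va_{12}$ is frozen) the map $\la_1\mapsto c_1\va\la_1^{-\gamma}+c_2\va P^{-\gamma}\la_1^{\gamma}$ is strictly convex, so the critical point is a saddle, and maximizing $\mathcal{J}_\va$ over your parameter box drives $(\la_1,\la_2)$ to the boundary. This is exactly why the paper does not use an extremal characterization here: it solves the reduced system $\partial\mathcal{J}/\partial\la_j=\partial\mathcal{J}/\partial\eta^j=0$ by a Brouwer degree computation ($\deg=-1$; Lemmas 4.1--4.3 and the proof of Theorem 2.3), and reserves the direct energy (minimization) argument for Theorem 2.4, where $\gamma_j\in(N-2,N)$ reverses the competition between the two terms. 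To repair your argument you must either compute the degree of the reduced gradient map or set up an honest min--max adapted to the saddle geometry; the local-maximum claim as stated is false.
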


We also consider the non-perturbation problem \eqref{P}. In the
following result, we can find some solutions to \eqref{P}
concentrating at exact two points and the distance between these
two points can be very large. Moreover, we construct infinitely
many solutions for \eqref{P} or \eqref{W} under the condition that
$\Phi(\zeta)$ has a sequence of strictly local maximum points
moving to infinity.
\begin{theorem}
Assume that $\Phi(Z,t)=\Phi(|Z|,t)$ is bounded and continuous in
$\mathbb{H}^n\,(n\geq 1)$ and satisfies:

$\Phi(Z,t)$ has a sequence of strictly local maximum point
$(0,\bar{t}^j)\in\mathbb{H}^n$ such that $\rho((0,\bar{t}^j))\to
+\infty$ and in a small neighbourhood of each $(0,\bar{t}^j)$,
there are constants $K_j>0$ and $\gamma_j\in (n,\,n+2)$ such that
\begin{equation}\label{V1}
\Phi(Z,t)=K_j-Q_j((|Z|,t)-(0,\bar{t}^j))+R_j((|Z|,t)-(0,\bar{t}^j)),\,\,\rho((Z,t),(0,\bar{t}^j))<\nu,
\end{equation}
where $K_j$ satisfies $0<c_1\leq K_j\leq c_2<\infty$,
$j=1,\cdot\cdot\cdot,$ $Q_j$ satisfies
$$
a_0(|Z|^4+t^2)^{\frac{\gamma_j}{2}}\leq Q_j(|Z|,t)\leq
a_1(|Z|^4+t^2)^{\frac{\gamma_j}{2}},\,\,\,j=1,\cdot\cdot\cdot,
$$
for some constants $0<a_0\leq a_1<\infty$ independent of $j$, and
$R_j(|Z|,t)$ satisfies
$R_j(|Z|,t)=O((|Z|^4+t^2)^{\frac{\gamma_j+\sigma}{2}})$ for some
$\sigma>0$ independent of $j$.

Then for each small $\mu>0$ and $\bar{t}^{j_1}$, we can find
another strictly local maximum point $\bar{t}^{j_2}$, such that
\eqref{P} has a solution in $S_0^1(\mathbb{H}^n)$ of the form
$$
u=\sum\limits_{l=1}^2
[\Phi(0,\bar{t}^{j_l})]^{-n/2}V_{t^{j_l},\lambda_{j_l}}(Z,t)+v(Z,t),
$$
where
$$
\|v\|_{S_0^1(\mathbb{H}^n)}\leq\mu,\,\,\,|t^{j_l}-\bar{t}^{j_l}|\leq
\mu,\,\,\,|t^{j_1}-t^{j_2}|\geq \frac{1}{\mu},\,\,\,\la_{j_l}\geq
\frac{1}{\mu}.
$$
\end{theorem}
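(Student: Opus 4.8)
The plan is to carry out a finite-dimensional Lyapunov--Schmidt reduction, working throughout in $S^1_{cy}(\mathbb{H}^n)$ so that, by the principle of symmetric criticality, a critical point there of the energy
\[
I(u)=\frac12\int_{\mathbb{H}^n}|\nabla_{\mathbb{H}^n}u|^2\,dZ\,dt-\frac{Q-2}{2Q}\int_{\mathbb{H}^n}\Phi\,u_+^{\frac{2Q}{Q-2}}\,dZ\,dt
\]
is a nonnegative solution of \eqref{P}. Fix a small $\mu>0$ and an index $j_1$. Since $\rho((0,\bar t^{\,j}))\to+\infty$, i.e. $|\bar t^{\,j}|\to+\infty$, one may pick $j_2$ with $|\bar t^{\,j_1}-\bar t^{\,j_2}|$ as large as needed (how large is fixed only at the end). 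As an approximate solution take
\[
W=W_{t,\la}=\sum_{l=1}^{2}[\Phi(0,\bar t^{\,j_l})]^{-n/2}\,V_{t^{j_l},\la_{j_l}},
\]
with the four parameters in the compact set $D_\mu=\{(t^{j_1},t^{j_2},\la_{j_1},\la_{j_2}):|t^{j_l}-\bar t^{\,j_l}|\le\mu,\ \tfrac1\mu\le\la_{j_l}\le L\}$, $L$ a large constant to be fixed. Since $n/2=(Q-2)/4$, the weights $[\Phi(0,\bar t^{\,j_l})]^{-n/2}$ are precisely those making the leading Sobolev energies of the two bubbles combine correctly with the factor $\Phi$.

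For the reduction, recall from \cite{Je3} the classification of positive cylindrically symmetric solutions of \eqref{L1}, together with the associated nondegeneracy (available, e.g., from the linear theory of \cite{Ma}): the kernel of $-\Delta_{\mathbb{H}^n}-\tfrac{Q+2}{Q-2}V_{s,\la}^{4/(Q-2)}$ on $S^1_{cy}(\mathbb{H}^n)$ is $\opspan\{\partial_s V_{s,\la},\partial_\la V_{s,\la}\}$. Let $E=E_{t,\la}$ be the $4$-dimensional span of these functions for the two bubbles, and decompose $u=W+v$ with $v\in E^{\perp}$. Since the two bubbles are widely separated, $I''(W)$ restricted to $E^{\perp}$ is invertible with inverse bounded uniformly over $D_\mu$; a contraction-mapping argument then solves the auxiliary equation $P_{E^{\perp}}I'(W+v)=0$ for a unique $v=v(t,\la)\in E^{\perp}$, of class $C^1$ in the parameters, satisfying $\|v(t,\la)\|_{S_0^1(\mathbb{H}^n)}\le C\,\|P_{E^{\perp}}I'(W)\|$ --- a negative power of $\la$ produced by the flatness of $\Phi$ near the $(0,\bar t^{\,j_l})$ and by the bubble interaction --- and one checks this bound is $\le\mu$ once $1/\mu$ and $|\bar t^{\,j_1}-\bar t^{\,j_2}|$ are large.

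It then remains to find a critical point in $\opint D_\mu$ of the reduced functional $J(t,\la):=I(W_{t,\la}+v(t,\la))$; such a critical point is automatically a critical point of $I$. Because $\|v\|^2$ is of strictly lower order, the expansion has the shape
\[
\begin{aligned}
J(t,\la)={}&\frac{A_0}{Q}\sum_{l=1}^{2}[\Phi(0,\bar t^{\,j_l})]^{-n}
+\frac{Q-2}{2Q}\sum_{l=1}^{2}[\Phi(0,\bar t^{\,j_l})]^{-Q/2}\int_{\mathbb{H}^n}Q_{j_l}\bigl((|Z|,t)-(0,\bar t^{\,j_l})\bigr)V_{t^{j_l},\la_{j_l}}^{\frac{2Q}{Q-2}}\\
&-c_0\,\varepsilon_{12}+(\text{h.o.t.}),
\end{aligned}
\]
where $A_0=\|V_{s,\la}\|_{S_0^1}^2$, $\varepsilon_{12}=\langle V_{t^{j_1},\la_{j_1}},V_{t^{j_2},\la_{j_2}}\rangle>0$, $c_0>0$. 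Three effects compete: the flatness term is nonnegative (as $Q_{j_l}\ge0$), comparable to $\la_{j_l}^{-2\gamma_{j_l}}$ with constants governed by $a_0,a_1$, strictly decreasing in $\la_{j_l}$, and minimal over $t^{j_l}$ at $t^{j_l}=\bar t^{\,j_l}$; the interaction $-c_0\varepsilon_{12}\asymp-(\la_{j_1}\la_{j_2})^{-(Q-2)/2}|t^{j_1}-t^{j_2}|^{-(Q-2)/2}$ is negative and strictly increasing toward $0$ in the $\la$'s. Since $\gamma_{j_l}>n=(Q-2)/2$, the flatness term decays in $\la_{j_l}$ strictly faster than the interaction, so with $|\bar t^{\,j_1}-\bar t^{\,j_2}|$ fixed and large these two balance at $\la^{\ast}_{j_l}\to+\infty$ (as $|\bar t^{\,j_1}-\bar t^{\,j_2}|\to+\infty$), where $J$ has an interior minimum in $(\la_{j_1},\la_{j_2})$, with $J(t^{\ast},\la^{\ast})$ strictly below the value $\frac{A_0}{Q}\sum_{l}[\Phi(0,\bar t^{\,j_l})]^{-n}$ obtained as $\la_{j_1},\la_{j_2}\to\infty$. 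I would therefore \emph{minimize} $J$ over the compact set $D_\mu$ and argue that a minimizer $(t^{\ast},\la^{\ast})$ lies in $\opint D_\mu$: on $\{\la_{j_l}=1/\mu\}$ the flatness term is large and positive; on $\{\la_{j_l}=L\}$, for $L$ large, $J$ has not yet fallen below $J(t^{\ast},\la^{\ast})$; and on $\{|t^{j_l}-\bar t^{\,j_l}|=\mu\}$ the flatness term jumps to order $\mu^{2\gamma_{j_l}}$, which strictly exceeds its interior value $\sim(\la^{\ast}_{j_l})^{-2\gamma_{j_l}}$ once $|\bar t^{\,j_1}-\bar t^{\,j_2}|$ is taken so large that $\la^{\ast}_{j_l}\gg1/\mu$. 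The bound $|t^{\ast,j_1}-t^{\ast,j_2}|\ge1/\mu$ is then automatic since $|\bar t^{\,j_1}-\bar t^{\,j_2}|$ is huge. Positivity of $u=W_{t^{\ast},\la^{\ast}}+v(t^{\ast},\la^{\ast})$ follows from the maximum principle (the truncation $u_+^{(Q+2)/(Q-2)}$ having been used in $I$) as $W_{t^{\ast},\la^{\ast}}>0$ and $\|v\|$ is small, and $u\in S^1_{cy}(\mathbb{H}^n)\subset S_0^1(\mathbb{H}^n)$ solves \eqref{P} by symmetric criticality. Finally, letting $\mu\downarrow0$ yields solutions with $\la^{\ast}_{j_l}\to+\infty$ and inter-peak distance $\to+\infty$, hence infinitely many distinct ones.

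The step I expect to be the main obstacle is exactly this a priori localization of the minimizer --- equivalently, proving $\la^{\ast}_{j_l}\to+\infty$ and $t^{\ast,j_l}\to\bar t^{\,j_l}$. It rests on sharp two-sided estimates of the flatness integral $\int_{\mathbb{H}^n}Q_{j_l}\,V_{t^{j_l},\la_{j_l}}^{2Q/(Q-2)}$ (including the delicate regime where this integral just fails to converge and has to be cut off at $\rho((Z,t),(0,\bar t^{\,j_l}))\sim\nu$) and of the interaction $\varepsilon_{12}$, and on using both ends of the hypothesis $\gamma_j\in(n,n+2)$: the lower bound $\gamma_j>n$ makes the flatness term dominate the interaction, forcing $\la^{\ast}\to+\infty$; the upper bound $\gamma_j<n+2$ keeps the remainders --- the $R_{j_l}$ contribution, the part of $\int_{\mathbb{H}^n}\Phi\,W^{2Q/(Q-2)}$ coming from $\rho((Z,t),(0,\bar t^{\,j_l}))\ge\nu$, and $\|v\|^2$ --- of strictly lower order than $\la^{-2\gamma_{j_l}}$.
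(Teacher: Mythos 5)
Your strategy --- Lyapunov--Schmidt reduction around a two-bubble ansatz followed by \emph{minimization} of the reduced functional over a compact parameter box, with the minimizer pushed off the boundary by comparing energies against a well-chosen test configuration --- is exactly the mechanism the paper uses (following \cite{Ya}): the paper likewise first localizes $\tilde\eta^j$ near $\bar\eta^j$ via the flatness term and then excludes the $\la$-faces using $\gamma_j>N-2$, and your identification of the roles of the two ends of $\gamma_j\in(n,n+2)$ and of the borderline divergence of the flatness integral matches the paper's computations. The genuine difference is \emph{where} you carry this out: you work directly on $\mathbb{H}^n$ with the bubbles $V_{s,\la}$ (invoking nondegeneracy from the linear theory of \cite{Ma}), whereas the paper first converts \eqref{P}, via the Grushin reduction and the substitution $r\mapsto\sqrt r$, into the Hardy--Sobolev equation \eqref{F} on $\R^{n+2}\times\R$ and does the entire reduction, the invertibility of the linearization (via the nondegeneracy of $U_{0,1}$ from \cite{Ca}), and all interaction/flatness integrals there; the paper's stated reason is that the Euclidean form makes these estimates sharper and more tractable, while your route avoids the change of variables at the cost of redoing the linear theory and the integral estimates on the Heisenberg group. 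One point in your setup does need repair rather than just elaboration: you take the $\la$-box to be $[1/\mu,L]$ with ``$L$ a large constant to be fixed'', but as you yourself observe the balancing scales $\la^{*}_{j_l}$ diverge as $s=|\bar t^{\,j_1}-\bar t^{\,j_2}|\to\infty$, so $L$ cannot be chosen independently of $s$ --- the paper makes this precise by taking $\la_j\in[\be_1L_j,\be_2L_j]$ with $L_j$ explicit powers of $s$ determined by $\gamma_1,\gamma_2$, and the boundary exclusion on the faces $\la_j=\be_1L_j$ and $\la_j=\be_2L_j$ is then a clean comparison of the two competing powers of $s$. With that adjustment (and fixing the minor slip that the flatness term on the face $|t^{j_l}-\bar t^{\,j_l}|=\mu$ is of order $\mu^{\gamma_{j_l}}$, not $\mu^{2\gamma_{j_l}}$), your argument runs parallel to the paper's.
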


We should point out here that if $\Phi(Z,t)$ is not a constant
identically, our assumption that $\Phi(Z,t)$ has at least at two
points is necessary for the existence of solutions to our
problems. Indeed, if $u$ is a solution to \eqref{P}, then $u$
satisfies the following identity:
$$
\int_{\mathbb{H}^n}\langle (Z,2t),\nabla \Phi(Z,t)\rangle
u^{\frac{2Q}{Q-2}} dZdt=0,
$$
provided the integral is convergent and $K$ is bounded and smooth
(see \cite{Gar}). Hence, $\langle (Z,2t),\nabla \Phi(Z,t)\rangle$
cannot have fixed sign in $\mathbb{H}^n$. As a result, if $\langle
(Z,2t),\nabla \Phi(Z,t)\rangle\geq 0$, then \eqref{K1} or
\eqref{V1} cannot hold in $\mathbb{H}^n$. If $\langle
(Z,2t),\nabla \Phi(Z,t)\rangle\leq 0$, then there are at least two
points such that \eqref{K1} or \eqref{V1} is satisfied.

Solutions obtained in Theorem 1.1 above is two-peaked (that is,
solutions concentrate at exactly two points simultaneously as
$\va\to 0$). However, a direct corollary from our proof of the
theorems is
\begin{corollary}
Under the assumptions of Theorem 1.1 and Theorem 1.2, problem
\eqref{P} does not have single-peaked solution(that is, solutions
concentrate at exactly one point) as $\va\to 0$.
\end{corollary}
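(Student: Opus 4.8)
\begin{altproof}{Corollary 1.3}
The plan is to argue by contradiction, reading off the conclusion from the finite-dimensional reduction that already underlies the proof of Theorems 1.1 and 1.2. Suppose \eqref{P} (resp.\ \eqref{P1}) had a family of solutions $u_\va\in S^1_{cy}(\mathbb{H}^n)$ concentrating at a single point as $\va\to0$. Since $u_\va(Z,t)=u_\va(|Z|,t)$ is constant on each set $\{|Z|=r\}\times\{t=t_0\}$, the blow-up point must lie on the axis $\{Z=0\}$, so the concentration analysis lets one write
$$
u_\va(Z,t)=[\Phi(0,s_\va)]^{-n/2}\,V_{s_\va,\lambda_\va}(Z,t)+v_\va(Z,t),\qquad\lambda_\va\to+\infty,\ \ s_\va\to\bar{t},\ \ \|v_\va\|_{S_0^1(\mathbb{H}^n)}\to0,
$$
with $v_\va$ in the complement of the approximate kernel; the pair $(\lambda_\va,s_\va)$ then solves the reduced equations $\partial_\lambda\Psi=\partial_s\Psi=0$ up to negligible errors, where $\Psi(\lambda,s)$ is the reduced energy obtained by substituting the one-bubble ansatz. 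First I would pin down $\bar{t}$: the equation $\partial_s\Psi=0$, together with the expansion of $\int_{\mathbb{H}^n}\Phi\,V_{s,\lambda}^{\frac{2Q}{Q-2}}$ computed in the proof, forces $\partial_t\Phi(0,s_\va)\to0$ at the appropriate rate, so $s_\va$ approaches a critical point of $t\mapsto\Phi(0,t)$; at such a point covered by \eqref{K1} (resp.\ \eqref{V1}) --- that is, one of the strictly local maximum points $(0,\bar{t}^j)$ --- the rest of the argument applies.

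Next I would use the expansion of $\Psi$ about a single bubble, which is a sub-case of the computation carried out in the proof. Its $\lambda$-dependent part is controlled by $\int_{\mathbb{H}^n}\Phi\,V_{s,\lambda}^{\frac{2Q}{Q-2}}$: under \eqref{K1} this is $\mathrm{const}+\lambda^{-2\gamma}(\xi A+aB)+o(\lambda^{-2\gamma})$ with fixed constants $A,B>0$, so since $\xi,a<0$ the $\lambda$-dependent part of $\Psi$ equals $+c\,\va\,\lambda^{-2\gamma}+o(\va\lambda^{-2\gamma})$ with $c>0$; under \eqref{V1} one gets $\mathrm{const}-\int_{\mathbb{H}^n}Q_j\,V_{s,\lambda}^{\frac{2Q}{Q-2}}+\cdots$, and the two-sided bound on $Q_j$ in \eqref{V1} gives a $\lambda$-dependent part $+c\,\lambda^{-\theta_j}+o(\lambda^{-\theta_j})$ with $c>0$ and $\theta_j>0$. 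In both cases this term is strictly monotone in $\lambda$, and --- this is the crux --- for a single bubble there is \emph{no bubble-interaction term} to balance it, unlike in the two-peak construction of Theorems 1.1 and 1.2. Hence $\partial_\lambda\Psi$ equals a nonzero constant multiple of $\va\lambda^{-2\gamma-1}$ (resp.\ of $\lambda^{-\theta_j-1}$) plus lower-order terms, so it cannot vanish for large $\lambda$, contradicting the reduced equation $\partial_\lambda\Psi=0$. Invariantly, the same contradiction comes from the local Pohozaev identity for $-\Delta_{\mathbb{H}^n}u_\va=\Phi\,u_\va^{\frac{Q+2}{Q-2}}$ on a small ball $B_\delta$ centered at $(0,\bar{t})$, tested against the generator $\langle(Z,2(t-s_\va)),\nabla u_\va\rangle$ of the anisotropic dilations at the peak: the bulk term $\int_{B_\delta}\langle(Z,2(t-s_\va)),\nabla\Phi\rangle\,u_\va^{\frac{2Q}{Q-2}}$ is sign-definite of order $\lambda_\va^{-\theta}$, while the boundary terms, which vanish for the limiting constant-coefficient bubble $[\Phi(0,\bar{t})]^{-n/2}V_{s_\va,\lambda_\va}$, are of strictly smaller order.

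The step I expect to be the main obstacle is making the order bookkeeping rigorous --- namely, that all error terms (from $v_\va$, from the remainders $R_j$ in \eqref{K1}--\eqref{V1}, and from the shift $s_\va-\bar{t}$) are genuinely of smaller order than $\va\lambda^{-2\gamma-1}$ (resp.\ $\lambda^{-\theta_j-1}$) after differentiating $\Psi$ in $\lambda$, equivalently that the Pohozaev boundary terms are $o(\lambda_\va^{-\theta})$. For Theorem 1.1 this is comfortable: $\theta=2\gamma<2n=Q-2$, so $\lambda^{-\theta}$ dominates the natural $O(\lambda^{-(Q-2)})$ boundary scale, and $\xi,a<0$ guarantees the leading coefficient is nonzero. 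For Theorem 1.2, where $\gamma_j\in(n,n+2)$ and hence $\theta_j>Q-2$, the leading curvature term and the surviving errors can be of comparable nominal order (around $\lambda^{-Q}$), and separating them requires the sharp pointwise and energy estimates on $v_\va$ established in the course of proving that theorem. This is precisely why the statement is recorded as a corollary of the proof rather than proved from scratch; once those estimates are in hand, the contradiction --- a strictly sign-definite leading term equal to a quantity of strictly smaller order --- is immediate.
\end{altproof}
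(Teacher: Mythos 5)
Your argument is essentially the paper's own: the authors likewise observe that a single-peaked solution would force $\frac{\pa J_\va}{\pa \la}=0$, while the expansion of Lemma 4.1 with the interaction term $\va_{12}$ deleted leaves only the sign-definite curvature term of order $\va\la^{-\gamma-1}$ (sign fixed by $\xi,a<0$, i.e.\ by \eqref{g}) plus strictly smaller errors, giving the contradiction $\la_\va^{-\gamma}=o(\la_\va^{-\gamma})$. Your Pohozaev-identity reformulation and the order-bookkeeping remarks are consistent supplements, but the core mechanism --- no bubble-interaction term available to balance the monotone $\la$-dependence of the reduced energy --- is exactly the one the paper uses.
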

In fact, if $u_\va$ concentrates exactly at one point, combining
the fact $\frac{\pa J_\va(\eta,\la,v)}{\pa \la}=0$ and Lemma 4.1
(where the interaction vanishes), we obtain a contradiction
$$
\la_\va^{-\gamma}=o(\la_\va^{-\gamma}).
$$

Our techniques consist in the transformation of the problems first
into a special form of   critical Grushin-type equations and then
into a special form of  Hardy-Sobolev-type equations on the
euclidean space, and the reduction of the problems to a study of a
finite-dimensional functional by a type of Lyapunov-Schmidt
reduction. In fact, we will give some  existence results of
concentration solutions  on  more general Hardy-Sobolev-type
equations. We will see later that it is right the transformation
of the problems into a Hardy-Sobolev-type equations on the
euclidean space that helps us to obtain more precise estimates and
furthermore to obtain more precise solutions.

We summarize the rest of the paper. In Section 2, we transform the
problems into  Hardy-Sobolev-type equations on the euclidean space
and give some more general results on the new equations. In
Section 3 we give some notations and the sketch of the proof of
the main results. The Lyapunov-Schmidt reduction is used to reduce
an infinite system to a finite one. Section 4 is devoted to the
proof of our main results with degree argument and energy analysis
method. For complement, all the basic estimates needed are proved
in Section 5.

\section{En equivalent problem}
In this section, we follow the idea in \cite{Ca} to derive an
equivalent problem of problem \eqref{P} in the cylindrical case.

 Consider the problem \eqref{P}. Recall that the Lie algebra of
the left-invariant vector fields on $\mathbb{H}^n$ is generated by
$$
X_j=\frac{\pa}{\pa x_j}+2y_j\frac{\pa}{\pa
t},\,\,\,\,Y_j=\frac{\pa}{\pa y_j}-2x_j\frac{\pa}{\pa
t},\,\,\,\,j=1,\cdot\cdot\cdot,n.
$$
The sub-elliptic gradient on $\mathbb{H}^n$ is given by
$\nabla_{\mathbb{H}^n}=(X_1,\cdot\cdot\cdot,X_n,Y_1,\cdot\cdot\cdot,Y_n)$
and the Kohn Laplacian on $\mathbb{H}^n$ is the degenerate-elliptic
PDO
$$
\Delta_{\mathbb{H}^n}=\sum\limits_{j=1}^n(X^2_j+Y^2_j).
$$
Then by direct computation one can see that
$$
X_i^2u=\frac{\pa^2 u}{\pa x_i^2}+4y_i\frac{\pa^2 u}{\pa x_i\pa
t}+4y_i^2\frac{\pa^2 u}{\pa t^2},\,\,\,\,\,Y_i^2u=\frac{\pa^2 u}{\pa
y_i^2}-4x_i\frac{\pa^2 u}{\pa y_i\pa t}+4x_i^2\frac{\pa^2 u}{\pa
t^2}.
$$
Hence, if $u(Z,t)=u(|Z|,t)>0$ is cylindrical symmetric (this is
natural in the Heisenberg group $\mathbb{H}^n$), then problem
\eqref{P} becomes
\begin{equation}\label{2.1}
-\Delta_Zu-4|Z|^2u_{tt}=\Phi(|Z|,t)u(|Z|,t)^{\frac{Q+2}{Q-2}},\,\,\,\,u>0,\,\,\,\,(Z,t)\in\R^{2n}\times\R,
\end{equation}
where $\Delta_Z$ is the Eculidean laplacian in $\R^{2n}$.

Equation (2.1) is a special form of the following problem related to
the Grushin operator
\begin{equation}\label{G}
\mathbb{G}u\triangleq-\Delta_yu-4|y|^2u_{z}=\Phi(y,z)u(y,z)^{\frac{Q+2}{Q-2}},\,\,\,\,u>0,\,\,\,\,\,(y,z)\in\R^{m_1}\times\R^{m_2},
\end{equation}
where $Q=m_1+2m_2$ is the ``appropriate" dimension and
$\frac{Q+2}{Q-2}$ is the corresponding critical exponent.

If $\Phi=\Phi(|y|,z)$ and $u=\psi(|y|,z)$ satisfy problem
\eqref{G}, then
\begin{equation}\label{2.3}
-\psi_{rr}(r,z)-\frac{m_1-1}{r}\psi_r(r,z)-4r^2\Delta_z\psi(r,z)=\Phi(|y|,z)\psi(r,z)^{\frac{Q+2}{Q-2}},
\end{equation}
where $r=|y|$.

Define
$$
v(r,z)=\psi(\sqrt{r},z).
$$
Then
$$
\psi_r(\sqrt{r},z)=2\sqrt{r}v_r(r,z),\,\,\,\psi_{rr}(\sqrt{r},z)=4rv_{rr}(r,t)+2v_r(r,z).
$$
Hence $v$ satisfies
\begin{equation}\label{2.4}
-v_{rr}(r,z)-\frac{m_1}{2r}v_r(r,z)-\Delta_z\psi(r,z)=\frac{\Phi(\sqrt{r},z)}{4r}v(r,z)^{\frac{Q+2}{Q-2}},
\end{equation}
that is, $v=v(|y|,z)$ solves the following Hardy-Sobolev-type
problem
\begin{equation}\label{2.5}
-\Delta
u(y,z)=\phi(y,z)\frac{u^{\frac{k+h}{k+h-2}}}{|y|},\,\,\,\,(y,z)\in\R^k\times\R^h,
\end{equation}
where $k=\frac{m_1+2}{2}$, $h=m_2$ and
$\phi(y,z)=\phi(|y|,z)=\frac{\Phi(\sqrt{r},z)}{4}$.

As a result, we can summarize the above facts to conclude that
\begin{proposition}
Let $m_1$ be even and $\Phi(y,z)=\Phi(|y|,z)$, then
$u(y,z)=u(|y|,z)$ solves problem \eqref{G} if and only if
$v(y,z)=u(\sqrt{|y|},z)$ solves problem \eqref{2.5} with
$k=\frac{m_1+2}{2}$, $h=m_2$ and
$\phi(y,z)=\phi(|y|,z)=\frac{\Phi(\sqrt{r},z)}{4}$. In particular,
$u(\zeta)=u(|Z|,t)$ solves problem \eqref{P} if and only if
$v(|Z|,t)=u(\sqrt{|Z|},t)$ solves problem \eqref{2.5} with $k=n+1$,
$h=1$. Moreover, there exists $c_{n}>0$ such that
\begin{equation}\label{E}
\int_{\mathbb{H}^n}|\nabla_{\mathbb{H}^n}u|^2dZdt=
c_n\int_{\R^k\times\R}|\nabla v|^2dydt.
\end{equation}
\end{proposition}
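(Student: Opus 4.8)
\quad The plan is to observe that the Proposition merely records the chain of reductions from \eqref{G} through \eqref{2.3} and \eqref{2.4} to \eqref{2.5} carried out above, together with the change-of-variables formula for the Dirichlet energy; the hypothesis that $m_1$ is even enters only to ensure that $k=\frac{m_1+2}{2}$ is a positive integer, so that $\R^k$ is meaningful. First I would fix a cylindrically symmetric $u(y,z)=\psi(|y|,z)$ and pass to polar coordinates in $y$: for such $u$ one has $\Delta_y u=\psi_{rr}+\frac{m_1-1}{r}\psi_r$ and $|y|^2\Delta_z u=r^2\Delta_z\psi$ with $r=|y|$, so that $u$ solves \eqref{G} if and only if $\psi$ solves \eqref{2.3}. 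Then, with $v(r,z)=\psi(\sqrt r,z)$, i.e.\ $\psi(r,z)=v(r^2,z)$, the chain-rule identities $\psi_r(\sqrt r,z)=2\sqrt r\,v_r(r,z)$ and $\psi_{rr}(\sqrt r,z)=4r\,v_{rr}(r,z)+2v_r(r,z)$ recorded above, substituted into \eqref{2.3} evaluated at radius $\sqrt r$ and divided by $4r$, give exactly \eqref{2.4} (with $\Delta_z v$ in place of $\Delta_z\psi$, since $\Delta_z v(r,z)=(\Delta_z\psi)(\sqrt r,z)$).

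The next point is to identify \eqref{2.4} as the cylindrical form of \eqref{2.5}. For $v=v(|y|,z)$ on $\R^k\times\R^h$ one has $\Delta v=v_{rr}+\frac{k-1}{r}v_r+\Delta_z v$, and $k-1=\frac{m_1}{2}$ precisely when $k=\frac{m_1+2}{2}$; moreover $k+h=\frac{m_1+2}{2}+m_2=\frac{Q+2}{2}$, hence $k+h-2=\frac{Q-2}{2}$ and $\frac{k+h}{k+h-2}=\frac{Q+2}{Q-2}$, so the exponents agree; finally $\frac{\Phi(\sqrt r,z)}{4r}=\frac{\phi(|y|,z)}{|y|}$ with $\phi(|y|,z)=\Phi(\sqrt{|y|},z)/4$. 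Since $r\mapsto r^2$ is a smooth diffeomorphism of $(0,\infty)$, every implication is reversible: given $v=v(|y|,z)$ solving \eqref{2.5}, the cylindrically symmetric function $u(y,z):=v(|y|^2,z)$ retraces the computation back to a solution of \eqref{G}. Taking $m_1=2n$, $m_2=1$ and recalling that \eqref{P} reduces to \eqref{2.1} for cylindrical $u$ yields the stated equivalence for \eqref{P}, with $k=n+1$, $h=1$.

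It remains to prove the energy identity \eqref{E}. Writing $X_iu=\frac{x_i}{r}\psi_r+2y_i\psi_t$, $Y_iu=\frac{y_i}{r}\psi_r-2x_i\psi_t$ with $r=|Z|$, and summing $(X_iu)^2+(Y_iu)^2$ over $i$, the mixed terms cancel and $\sum_i(x_i^2+y_i^2)=r^2$ gives $|\nabla_{\mathbb{H}^n}u|^2=\psi_r^2+4r^2\psi_t^2$. Passing to polar coordinates in $Z\in\R^{2n}$, then substituting $s=r^2$ with $\psi_r=2\sqrt s\,v_s$, $\psi_t=v_t$ and $r^{2n-1}\,dr=\frac12 s^{\,n-1}\,ds$, I obtain
$$
\int_{\mathbb{H}^n}|\nabla_{\mathbb{H}^n}u|^2\,dZ\,dt
=2\,\omega_{2n-1}\int_{\R}\!\int_0^\infty(v_s^2+v_t^2)\,s^{\,n}\,ds\,dt
=\frac{2\,\omega_{2n-1}}{\omega_n}\int_{\R^{n+1}\times\R}|\nabla v|^2\,dy\,dt ,
$$
where $\omega_{m-1}$ is the area of the unit sphere of $\R^m$; thus $c_n=2\omega_{2n-1}/\omega_n>0$. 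The same computation with general even $m_1$ and $m_2=h$ produces the constant $2\omega_{m_1-1}/\omega_{k-1}$.

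The only point that really needs care — rather than a genuine obstacle — is matching function spaces and weak formulations: one should check that a cylindrically symmetric $u$ belongs to $S_0^1(\mathbb{H}^n)$ if and only if the associated $v$ belongs to the natural space in which \eqref{2.5} is posed, and that testing \eqref{G} against cylindrical test functions corresponds, under the same substitution, to testing \eqref{2.5}. Both follow from the energy identity above together with density of smooth cylindrical functions, and from the fact that the substitution is a bijection on cylindrical smooth functions; the integrable singularity of $\phi/|y|$ at $y=0$ is harmless since only cylindrical functions, for which the weight is integrated against $s^{k-1}\,ds$, are involved. No deeper difficulty appears: the Proposition is in essence a bookkeeping lemma assembling the computations of this section.
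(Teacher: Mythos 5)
Your proposal is correct and follows essentially the same route as the paper: the equivalence of equations is exactly the chain of substitutions \eqref{2.1}--\eqref{2.5} already displayed in the section (which the paper therefore omits from the formal proof), and your derivation of \eqref{E} — cancellation of the mixed terms in $\sum_i(|X_iu|^2+|Y_iu|^2)$, polar coordinates in $Z$, and the substitution $s=|Z|^2$ — reproduces the paper's computation, with the same constant $c_n=2\om_{2n}/\om_{k}$ up to the differing sphere-measure convention.
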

\begin{proof}
We only prove \eqref{E}. This can be done by the following
calculation:
\begin{eqnarray*}
\int_{\mathbb{H}^n}|\nabla_{\mathbb{H}^n}u|^2dZdt&=&\int_{\mathbb{H}^n}\sum\limits_{i=1}^n
(|X_i(u)|^2+|Y_i(u)|^2)dZdt\\
&=&\int_{\mathbb{H}^n}\sum\limits_{i=1}^n \Bigl(\bigl|\frac{\pa
u}{\pa x_i}\bigl|^2+\bigl|\frac{\pa u}{\pa
y_i}\bigl|^2+4(x_i^2+y_i^2)\bigl|\frac{\pa u}{\pa
t}\bigl|^2\Bigl)dZdt\\
&=& \om_{2n}\int_{\R^+\times\R} \Bigl(\bigl|\frac{\pa u}{\pa
r}\bigl|^2+4r^2\bigl|\frac{\pa u}{\pa t}\bigl|^2\Bigl)r^{2n-1}drdt\\
&=&2\om_{2n}\int_{\R^+\times\R} \Bigl(\bigl|\frac{\pa v}{\pa
r}\bigl|^2+\bigl|\frac{\pa v}{\pa t}\bigl|^2\Bigl)r^{k-1}drdt\\
&=&\frac{2\om_{2n}}{\om_{k}}\int_{\R^k\times\R}|\nabla v|^2dydt,
\end{eqnarray*}
where (and in the sequel) $\om_N$ is the measure of the $N-1$
dimensional sphere $S^{N-1}$.
\end{proof}

In the sequel, we will consider a more general problem, that is
\begin{equation}\label{F}
-\Delta u(y,z)=\phi(y,z)\frac{u^{\frac{N}{N-2}}}{|y|},
\,\,\,\,u>0,\,\,\,(y,z)\in\R^k\times\R^h=\R^N,\,\,(k\geq 2,\,h\geq
1).
\end{equation}
 Consider the limiting problem
\begin{equation}\label{L}
-\Delta u=\frac{u^{\frac{N}{N-2}}}{|y|},\,\,\,u>0,\,\,\,
x\triangleq(y,z)\in\,\R^N, u\in D^{1,2}(\R^N),
\end{equation}
where
$$
\D=\{u\in L^{2(N-1)/(N-2)}(|y|,\R^N):\,\,|\nabla u|\in L^2(\R^N)\}
$$
and $\D$ endows the norm $\|u\|\triangleq(\int_{\N}|\nabla
u|^2dx)^{1/2}$, which is induced by the inner produce $\langle
u,v\rangle=\int_{\N} \nabla u \nabla v dx$. It is known from
\cite{Ca} that for $\zeta \in \R^h$, $\la
>0$, functions
$$
U_{\zeta,
\la}(x)=\frac{[(N-2)(k-1)]^{\frac{N-2}{2}}\la^{\frac{N-2}{2}}}
{\Bigl((1+\la |y|)^2+\la^2|z-\zeta|^2\Bigl)^{\frac{N-2}{2}}}
$$
solve \eqref{L}.

\begin{corollary}
$U_{0,1}=2^{-n}V_{0,1}$ and $c_0=(2n)^n$, where $c_0$ and
$V_{0,1}$ are  defined by \eqref{M}.
\end{corollary}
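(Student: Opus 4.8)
The plan is to push the explicit bubble $V_{0,1}$ through the chain of substitutions of Section~2 and to match the outcome against the explicit formula for $U_{0,1}$, reading off $c_0$ from the fact that the transformed function must again solve the homogeneous limiting equation \eqref{L}.

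First I would specialize Proposition~2.1 to the Heisenberg case $m_1=2n$, $m_2=1$ (so $k=n+1$, $h=1$, $N=k+h=n+2$, and $\frac{Q+2}{Q-2}=\frac{n+2}{n}=\frac{N}{N-2}$) and to $\Phi\equiv 1$. Since $V_{0,1}$ is cylindrically symmetric and solves \eqref{L1}, i.e.\ \eqref{P} with $\Phi\equiv1$, the proposition gives that $\tilde v(y,z):=V_{0,1}$ evaluated at the point with $|Z|=\sqrt{|y|}$, $t=z$, solves \eqref{2.5} with $\phi\equiv\tfrac14$, namely
\[
-\Delta\tilde v=\frac14\,\frac{\tilde v^{N/(N-2)}}{|y|},\qquad (y,z)\in\R^{n+1}\times\R .
\]
From \eqref{M}, using $\frac{Q-2}{4}=\frac n2$ and $|Z|^2=|y|$, one gets the explicit form $\tilde v(y,z)=c_0\big((1+|y|)^2+|z|^2\big)^{-n/2}$.

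Next I would remove the factor $\tfrac14$ by scaling. Putting $w:=\mu^{-1}\tilde v$ turns the equation into $-\Delta w=\tfrac{\mu^{2/n}}{4}\,w^{N/(N-2)}/|y|$, so the choice $\mu=4^{n/2}=2^n$ makes $w$ a solution of \eqref{L}. On the other hand $U_{0,1}$ solves \eqref{L}, and its explicit expression (with $(N-2)(k-1)=n^2$ and $\frac{N-2}{2}=\frac n2$) is $n^n\big((1+|y|)^2+|z|^2\big)^{-n/2}$. Hence $w$ and $U_{0,1}$ are positive scalar multiples of one another while both solve the \emph{homogeneous} equation \eqref{L}; comparing $\alpha(-\Delta U_{0,1})$ with $-\Delta(\alpha U_{0,1})=\alpha^{N/(N-2)}(-\Delta U_{0,1})$ and using $\tfrac{N}{N-2}\neq1$ forces $\alpha=1$. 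Therefore $w=U_{0,1}$, which yields at once $2^{-n}c_0=n^n$, i.e.\ $c_0=(2n)^n$, together with $2^{-n}\tilde v=U_{0,1}$; after the change of radial variable $|y|=|Z|^2$ that defines $\tilde v$ from $V_{0,1}$, this is exactly the asserted identity $U_{0,1}=2^{-n}V_{0,1}$.

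The only point needing care — and the step I expect to be the (modest) main obstacle — is the consistent bookkeeping of the two scalar factors: the $\tfrac14$ generated by $\phi=\Phi(\sqrt r,z)/4$ in \eqref{2.5}, and the common exponent $\frac{N-2}{2}=\frac n2$ appearing in both $V_0$ and $U_{\zeta,\la}$. Once the rescaling $\mu=2^n$ is pinned down, what remains is a direct comparison of two explicit rational functions, and the value of $c_0$ drops out from the homogeneity of \eqref{L}.
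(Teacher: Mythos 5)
Your argument is correct and follows essentially the same route as the paper: specialize Proposition 2.1 with $\Phi\equiv 1$ to transport $V_{0,1}$ to a solution of \eqref{2.5} with $\phi\equiv\tfrac14$, rescale by $2^{n}$ to land on a solution of \eqref{L}, and compare with the explicit $U_{0,1}$. The bookkeeping ($k=n+1$, $N-2=n$, $[(N-2)(k-1)]^{(N-2)/2}=n^{n}$, $\mu^{2/n}=4$) is all right, and your homogeneity argument forcing $\alpha=1$ is a clean way to carry out what the paper dismisses as ``direct calculation'' to get $c_0=(2n)^n$.
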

\begin{proof}
By \eqref{L1}, \eqref{E} and \eqref{L}, we can deduce from
Proposition 2.1 that $U_{0,1}=2^{-n}V_{0,1}$. Moreover,  by direct
calculation, we see $c_0=(2n)^n$.
\end{proof}

We first consider the case in which $\phi(y,z)$ is a perturbation
of a constant, that is $\phi(y,z)=1+\va  K(y,z)$. Suppose that for
some $\delta>0$
\begin{equation}\label{K}
\begin{array}{ll}
K(x)=&K(0,\bar{\eta})+\sum\limits_{i=1}^{k}\xi_i|y|^\gamma+\sum\limits_{i=1}^{h}a_i|z_i-\bar{\eta}_i|^\gamma
+O(|x-(0,\bar{\eta}|^{\gamma+\sigma}),\vspace{0.2cm}\\
&\,\,\,x\in\{x\in \R^N(N>3)\,:\,|x-(0,\bar{\eta})|<\delta\,\},
\end{array}
\end{equation}
where $\xi_i,\,a_j$, $\gamma$ and $\sigma$ are some constants
depending on $\bar{\eta}$, $\xi_i,\,a_j\neq 0$ for
$i=1,\cdot\cdot\cdot,k,\,\,j=1,\cdot\cdot\cdot,h$, $\gamma\in
(1,\,N-2)$ and $\sigma\in (0,\,1)$.
 Set
 $\xi=(\xi_1,\cdot\cdot\cdot,\xi_k),\,\,\mathbf{a}=(a_1,\cdot\cdot\cdot,a_h)$.
 Define
$$
g(\pi_1,\pi_2,\gamma,\xi,\mathbf{a})=\frac{\pi_1}{k}\sum_{j=1}^{k}\xi_j
+\frac{\pi_2}{h}\sum_{j=1}^{h}a_j,
$$
where
\begin{eqnarray*}
\pi_1=\int_{\N}
\frac{|y|^{\gamma}(1-|y|^2-|z|^2)dx}{|y|[(1+|y|)^2+|z|^2]^{N}},\,\,\,\,\,\pi_2=\int_{\N}
\frac{|z|^{\gamma}(1-|y|^2-|z|^2)dx}{|y|[(1+|y|)^2+|z|^2]^{N}}.
\end{eqnarray*}
We remark that by Lemma 5.9,  $\pi_1<0,\,\pi_2<0$.

Suppose that
\begin{equation}\label{g}
g(\pi_1,\pi_2,\gamma,\xi,\mathbf{a})>0.
\end{equation}

Define
\[
\begin{array}{ll}
\Lambda:=\Bigl\{(0,\bar{\eta})\in\R^{N}&:\,\,D_{x}K(x)\bigl|_{x=(0,\bar{\eta})}=0,\,\,\,K(x)
\,\,\hbox{satisfies}\,\,\,\eqref{K}\,\,\,\hbox{and}\,\,\,\,\eqref{g}\Bigl\}.
\end{array}
\]

 The following result is corresponding to Theorem 1.1.
\begin{theorem}Suppose that $K(y,z)$ is bounded and continuous in $\R^N\,(N>3)$, $\phi(y,z)=1+\va K(y,z)$, $\La$ contains at least two points.
Then for each $\bar{\eta}^1,\,\bar{\eta}^2\in\La$,
 $\bar{\eta}^1 \neq \bar{\eta}^2$, there exists  $\va_0>0$ such that for
 $\va\in(0,\va_0)$, \eqref{F} has a solution of the form
$$
u_\varepsilon=\sum\limits_{j=1}^2
U_{\eta^j_\va,\lambda_{\va,j}}+v_\varepsilon
$$
with $\lambda_{\va,j} \to +\infty$, $\eta^j_\va\to \bar{\eta}^j$
\,($j=1,2$) and $\|v_\va\|\to 0$  as $\va\to 0$.
\end{theorem}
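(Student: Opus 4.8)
The plan is to use a finite-dimensional Lyapunov--Schmidt reduction adapted to the Hardy--Sobolev equation \eqref{F}. First I would set up the variational framework: solutions of \eqref{F} are critical points of the functional
\[
J_\va(u)=\frac12\int_{\N}|\nabla u|^2\,dx-\frac{N-2}{2(N-1)}\int_{\N}(1+\va K(y,z))\frac{(u^+)^{\frac{2(N-1)}{N-2}}}{|y|}\,dx
\]
on $\D$, and the approximate solutions are the two-bubble sums $W_{\va,\eta,\la}=U_{\eta^1,\la_1}+U_{\eta^2,\la_2}$ with $\eta^j$ near $\bar\eta^j$ and $\la_j$ large. One writes $u=W_{\va,\eta,\la}+v$, decomposes $\D=E_{\va,\eta,\la}\oplus E_{\va,\eta,\la}^{\perp}$ where $E$ is the span of the derivatives of the $U_{\eta^j,\la_j}$ with respect to the parameters $\eta^j$ (the $h$ translation directions) and $\la_j$ (the dilation), and solves the projected equation for $v\in E^{\perp}$ as a function of $(\eta,\la)$ by the implicit function theorem / contraction mapping, exploiting the nondegeneracy of the limiting bubbles $U_{\zeta,\la}$ (which gives invertibility of the linearized operator on $E^{\perp}$, uniformly as $\la_j\to\infty$, because the interaction between the two bubbles is small when $|z^1-z^2|$ stays bounded away from $0$ and the bubbles concentrate).

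Second, I would carry out the error estimates. The key quantitative inputs — which I would reference from Section~5 — are: (i) $\|v_\va\|=o(1)$, with a sharp rate in terms of $\va$, $\la_j$, and the flatness exponent $\gamma$; (ii) the expansion of the reduced energy
\[
I_\va(\eta,\la):=J_\va(W_{\va,\eta,\la}+v_\va(\eta,\la))=2A+\va\Big(\sum_{j=1}^2 c\,\la_j^{-\gamma}\,g(\pi_1,\pi_2,\gamma,\xi^j,\mathbf a^j)+\text{l.o.t.}\Big)+o(\va\la_j^{-\gamma}),
\]
where $A$ is the energy of a single bubble and the interaction term between the two bubbles is negligible because $|z^1-z^2|$ is bounded below; the sign condition \eqref{g} (together with $\pi_1,\pi_2<0$ from Lemma~5.9 and $\xi_i,a_j\neq0$, $\bar\eta^j\in\La$ meaning $D_xK=0$ there) ensures the $\la_j^{-\gamma}$ coefficient has a definite sign. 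Crucially, since $\bar\eta^j$ is a critical point of $K$, the dependence on $\eta^j$ near $\bar\eta^j$ is a genuine (non-degenerate up to the flatness expansion) min or max, so $I_\va$ has a critical point $(\eta_\va,\la_\va)$ with $\eta_\va^j\to\bar\eta^j$ and $\la_{\va,j}\to\infty$; I would locate it by a topological degree argument (or a min-max over a suitable box in the $(\eta,\la)$ variables, using that the boundary contributions have the wrong sign). That critical point of the reduced functional yields, by the standard reduction lemma, a genuine critical point of $J_\va$, i.e.\ a solution $u_\va=W_{\va,\eta_\va,\la_\va}+v_\va$ of \eqref{F} of the asserted form.

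The main obstacle I anticipate is controlling the linearized operator and the error term uniformly as $\la_j\to+\infty$ in the Hardy--Sobolev setting: the weight $|y|^{-1}$ and the anisotropic concentration of $U_{\zeta,\la}$ (which concentrates along the set $\{y=0\}$ rather than at a point) mean the usual elliptic estimates must be replaced by weighted ones, and one must verify that the two-bubble interaction really is of lower order than $\la_j^{-\gamma}$ — this is where the hypothesis $\gamma\in(1,N-2)$ and $\bar\eta^1\neq\bar\eta^2$ (distinct $z$-components) enter decisively. A secondary delicate point is that, unlike the classical Bahri--Coron situation, here one cannot send the concentration parameters to $+\infty$ freely: one needs the reduced energy expansion to be accurate enough (error $o(\va\la_j^{-\gamma})$ rather than just $o(\va)$) so that the critical point of the leading term persists; obtaining this requires the refined estimates of Section~5 on $\pi_1,\pi_2$ and on $\|v_\va\|$. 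Once these estimates are in hand, the degree/min-max step is routine because \eqref{g} pins down the sign of the dominant term in each concentration variable.
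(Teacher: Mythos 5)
Your overall framework (two-bubble ansatz, decomposition along the kernel directions $\partial_{\la_j}U_{\eta^j,\la_j}$, $\partial_{\eta^j_i}U_{\eta^j,\la_j}$, contraction/implicit-function solution of the projected equation using nondegeneracy, then a degree argument in $(\eta,\la)$) is exactly the paper's, and your error estimate for $v_\va$ matches Proposition 3.2. But there is a genuine gap in the finite-dimensional step: you declare the bubble--bubble interaction ``negligible because $|z^1-z^2|$ is bounded below'' and propose to find the critical point of the reduced energy from the term $\va\,c\,\la_j^{-\gamma}g(\pi_1,\pi_2,\gamma,\xi^j,\mathbf a^j)$ alone. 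That term is monotone in each $\la_j$, so by itself it has \emph{no} critical point in $\la_j$: on any box $[m_1L,m_2L]$ the derivative $\partial_{\la_j}$ of the leading term never vanishes, the degree is zero, and a min--max puts the candidate on the boundary. This is precisely the content of Corollary 1.3: a single bubble cannot concentrate because $\frac{\pa J}{\pa\la}=0$ would force $\la_\va^{-\gamma}=o(\la_\va^{-\gamma})$. Your later remark that one ``must verify that the two-bubble interaction really is of lower order than $\la_j^{-\gamma}$'' is therefore backwards: if it were of lower order, the $\la$-equations would be unsolvable.

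The actual mechanism is a \emph{balance} between the curvature term and the interaction. In Lemma 4.1 one has
\[
\frac{\pa J}{\pa \la_i}=-C_{N,k}b_1\frac{\va_{12}}{\la_i}-\frac{(N-2)C_{N,k}}{2\la_i^{\gamma_i+1}}\Bigl[\frac{b^i_2}{k}\sum_j\xi^i_j+\frac{b^i_3}{h}\sum_j a^i_j\Bigr]\va+\cdots,
\]
with $b_1<0$ and the bracket positive by \eqref{g}, so the two leading terms have \emph{opposite} signs and cancel exactly when $\va_{12}\sim \va\,\la_i^{-\gamma_i}$. This dictates the scaling $\la_j=t_jL_\va^{1/\gamma_j}$ with $L_\va=\va^{-\gamma_1\gamma_2/(\frac{N-2}{2}(\gamma_1+\gamma_2)-\gamma_1\gamma_2)}$, under which the reduced system becomes $t_j^{-\gamma_j}-c_j(t_1t_2)^{-(N-2)/2}=o_\va(1)$ with $c_j>0$, and it is \emph{this} map whose degree on $[m_1,m_2]^2$ is $-1$. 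Note also that $\gamma\in(1,N-2)$ is needed not to suppress the interaction but to make $L_\va\to+\infty$ as $\va\to0$, and that \eqref{g} is not merely a ``definite sign'' condition: it must produce the sign \emph{opposite} to that of the interaction term (equivalently $c_j>0$), otherwise no balance is possible. As written, your argument would fail at the point of solving the $\la$-equations.
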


We also construct some solutions to \eqref{F} which concentrate
exactly at two  points between which the distance can be very
large. This result is a counterpart of Theorem 1.3.

\begin{theorem}
Assume that $\phi$ is bounded and continuous in $\N$ and
satisfies:

$\phi(y,z)$ has a sequence of strictly local maximum point
$(0,\bar{\eta}^j)\in\R^N\,(N\geq 3)$ such that $|\bar{\eta}^j|\to
+\infty$ and in a small neighbourhood of each $\bar{\eta}^j$,
there are constants $K_j>0$ and $\gamma_j\in (N-2,\,N)$ such that
\begin{equation}\label{V}
\phi(x)=K_j-Q_j(x-(0,\bar{\eta}^j))+R_j(x-(0,\bar{\eta}^j)),\,\,x=(y,z)\in
B_\nu(0,\bar{\eta}^j),
\end{equation}
where $K_j$ satisfies $0<c_1\leq K_j\leq c_2<\infty$,
$j=1,\cdot\cdot\cdot,$ $Q_j$ satisfies
$$
a_0|x|^{\gamma_j}\leq Q_j(x)\leq
a_1|x|^{\gamma_j},\,\,\,j=1,\cdot\cdot\cdot,
$$
for some constants $0<a_0\leq a_1<\infty$ independent of $j$, and
$R_j(x)$ satisfies $R_j(x)=O(|x|^{\gamma_j+\sigma})$ for some
$\sigma>0$ independent of $j$.

Then for each small $\mu>0$ and $\bar{\eta}^{j_1}$, we can find
another strictly local maximum point $\bar{\eta}^{j_2}$, such that
\eqref{F} has a solution of the form
$$
u=\sum\limits_{l=1}^2 K_{j_l}^{(2-N)/2}U_{\eta^{j_l},\la_{j_l}}+v,
$$
where
$$
\|v\|\leq\mu,\,\,\,|\eta^{j_l}-\bar{\eta}^{j_l}|\leq
\mu,\,\,\,|\eta^{j_1}-\eta^{j_2}|\geq
\frac{1}{\mu},\,\,\,\la_{j_l}\geq \frac{1}{\mu}.
$$
\end{theorem}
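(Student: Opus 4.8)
The plan is to prove Theorem 2.6 by a Lyapunov--Schmidt finite-dimensional reduction for the Hardy--Sobolev equation \eqref{F} with $\phi$ satisfying \eqref{V}, using a two-bubble ansatz built from the solitons $U_{\eta,\la}$ of \eqref{L}. First I would fix $\bar\eta^{j_1}$ and, for a parameter $\mu>0$, search for a solution of the form $u=\sum_{l=1}^2 K_{j_l}^{(2-N)/2}U_{\eta^{j_l},\la_{j_l}}+v$, where the second peak index $j_2$ is to be chosen along the sequence of local maxima, and the concentration parameters range over a configuration set $\mathcal D_\mu=\{(\eta^{j_l},\la_{j_l}):|\eta^{j_l}-\bar\eta^{j_l}|\le\mu,\ \la_{j_l}\ge 1/\mu\}$ with the two peaks forced far apart, $|\eta^{j_1}-\eta^{j_2}|\ge 1/\mu$. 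Working in $D^{1,2}(\R^N)$, I would set up the energy functional $J$ associated to \eqref{F}, split $v$ into the part $w$ orthogonal to the $2(h+1)$-dimensional tangent space spanned by $\partial_{\eta}U_{\eta^{j_l},\la_{j_l}}$ and $\partial_{\la}U_{\eta^{j_l},\la_{j_l}}$, and solve the orthogonal (infinite-dimensional) equation for $w$ by contraction, obtaining a unique small $w=w(\eta,\la)$ with $\|w\|\le\mu$ once $1/\mu$ is large; the key linear invertibility (nondegeneracy of $U_{0,1}$ modulo the known kernel, for the Hardy--Sobolev/Grushin operator) is the same as the one already used in \cite{Ca} and underlies the earlier reduction results quoted in the excerpt, so I would invoke it.

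Next I would carry out the energy expansion of the reduced functional $I(\eta,\la):=J\big(\sum K_{j_l}^{(2-N)/2}U_{\eta^{j_l},\la_{j_l}}+w\big)$ on $\mathcal D_\mu$. The leading term is $\sum_{l=1}^2 K_{j_l}^{(2-N)/2}\,c\cdot$(Sobolev constant), which is constant; the $\la$-dependence enters through two competing effects: the local curvature correction coming from $\phi(x)=K_{j_l}-Q_{j_l}(x-(0,\bar\eta^{j_l}))+R_{j_l}$, which by the bound $a_0|x|^{\gamma_{j_l}}\le Q_{j_l}(x)\le a_1|x|^{\gamma_{j_l}}$ contributes a term of size $-c\,\la_{j_l}^{-\gamma_{j_l}}$ (negative, with $\gamma_{j_l}\in(N-2,N)$, so it is integrable against the bubble and wants $\la_{j_l}$ large), and the interaction between the two far-apart bubbles, which by Lemma~4.1 (referenced in the excerpt, where the interaction is shown to be of strictly smaller order than the single-bubble curvature terms when the peaks are widely separated) is negligible compared with $\la_{j_l}^{-\gamma_{j_l}}$. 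Thus $I$ behaves like a constant plus a sum of two essentially decoupled functions, each of the form $-c\,\la^{-\gamma}(1+o(1))$ plus a term controlled by $|\eta-\bar\eta|$; since each $\bar\eta^{j_l}$ is a strict local maximum of $\phi$, the $\eta$-variable is pinned near $\bar\eta^{j_l}$, and maximizing over $\la_{j_l}$ inside $[1/\mu,\infty)$ either produces an interior critical point or is controlled by a boundary/degree argument.

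To extract an actual critical point I would use a topological degree / min-max argument on $\mathcal D_\mu$ rather than a naive maximization, because the reduced functional on the $\la$-variables is monotone-like and its critical point may sit on the boundary in a borderline way; this is exactly the ``degree argument and energy analysis'' announced for Section~4. Concretely: (i) show $\partial I/\partial\eta=0$ can be solved for $\eta^{j_l}=\eta^{j_l}(\la)$ near $\bar\eta^{j_l}$ by the strict local maximum condition and an implicit-function/degree step; (ii) on the remaining $\la$-variables, use the expansion together with the freedom to choose $j_2$ far out along the sequence (so that $K_{j_2}$, $\gamma_{j_2}$, and the interaction are in the favorable range, using $c_1\le K_j\le c_2$, $a_0\le a_1$, $\sigma$ all uniform in $j$) to trap a critical point with $\la_{j_l}\ge 1/\mu$; (iii) conclude that the corresponding $(\eta,\la)$ gives, via the reduction, a genuine solution $u$ of \eqref{F} of the claimed form with $\|v\|\le\mu$. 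The main obstacle I anticipate is step (ii): controlling the reduced energy \emph{uniformly in $j_2$} as $|\bar\eta^{j_2}|\to\infty$ — i.e., proving that the error terms $R_j$, the $w$-correction, and especially the two-bubble interaction (Lemma~4.1) are all genuinely lower order than $\la^{-\gamma_j}$ with constants independent of $j$, and then organizing a degree computation on the (possibly non-compact, boundary-sensitive) $\la$-domain that is robust under the arbitrary choice of the second concentration point. This is where the $j$-independence of $c_1,c_2,a_0,a_1,\sigma$ in the hypotheses of the theorem is essential, and where I would spend most of the technical effort, deferring the individual interaction and remainder estimates to Section~5 as the excerpt indicates.
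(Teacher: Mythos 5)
Your overall framework (two-bubble ansatz, Lyapunov--Schmidt reduction, analysis of a reduced energy on a configuration set) matches the paper's, but your description of the reduced energy landscape contains a gap that would sink the argument. You assert that the two-bubble interaction is ``negligible compared with $\la_{j_l}^{-\gamma_{j_l}}$'' and that the reduced functional is ``a sum of two essentially decoupled functions, each of the form $-c\,\la^{-\gamma}(1+o(1))$.'' If that were so, the reduced functional would be monotone in each $\la_{j_l}$ and would have \emph{no} critical point at all --- this is precisely the content of Corollary 1.3, which rules out single-peaked solutions. The actual mechanism is the opposite: the local curvature deficit $Q_j\geq a_0|x|^{\gamma_j}$ contributes a \emph{positive} term of order $\la_j^{-\gamma_j}$ to the reduced energy (pushing $\la_j\to\infty$ under minimization), while the cross-interaction contributes a \emph{negative} term $-D\,\va_{12}=-D(1+o(1))(\la_1\la_2 s^2)^{(2-N)/2}$ with $s=|\bar{\eta}^{j_1}-\bar{\eta}^{j_2}|$ (pushing $\la_j$ down), and a critical point exists only because these two effects are made \emph{comparable}. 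This is achieved by taking $\la_j\sim L_j$ with $L_1=s^{(N-2)\gamma_2/[\gamma_1\gamma_2-(\gamma_1+\gamma_2)(N-2)/2]}$ and symmetrically for $L_2$; the hypothesis $\gamma_j>N-2$ is exactly what makes this exponent positive and allows the interaction to dominate the local term at a suitable test configuration. You attribute the role of $\gamma_j\in(N-2,N)$ solely to integrability (which only needs $\gamma_j<N$), missing that $\gamma_j>N-2$ is the condition enabling the balance, and you never specify how $j_2$ or the $\la$-scales are chosen --- which is where the whole construction lives.

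Relatedly, the paper does not use a degree argument for this theorem (that is the method for the perturbative Theorem 2.3). Here it minimizes $J(\eta,\la,v(\eta,\la))$ over the compact box $\{|\eta^j-\bar{\eta}^j|\leq\mu,\ \la_j\in[\be_1 L_j,\be_2 L_j]\}$ and shows the minimizer is interior: a test point $\la_0=(t_{0,1}L_1,t_{0,2}L_2)$ gives a strictly negative $\la$-dependent energy of order $-s^{-2\gamma_1\gamma_2(N-2)/[2\gamma_1\gamma_2-(\gamma_1+\gamma_2)(N-2)]}$ (possible only because the interaction can beat $\sum_j\la_j^{-\gamma_j}$ when $\gamma_j>N-2$), and the boundary cases $\tilde{\la}_j=\be_1 L_j$ and $\tilde{\la}_j=\be_2 L_j$ are excluded by comparison with this test value for $\be_1$ small and $\be_2$ large. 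A degree computation set up on the decoupled, monotone landscape you describe has nothing to detect, so step (ii) of your plan cannot be completed as written.
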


\section{Notations and preliminary results}
 The functional corresponding to \eqref{F}  can be
defined as
$$
I(u)=\frac{1}{2}\int_{\N}|\nabla
u|^2dx-\frac{N-2}{2(N-1)}\int_{\N}\phi(y,z)\frac{|u|^{\s}}{|y|}dx,\,\,\,u\in
\D.
$$

In what follows, we mainly concentrate on the case
$\phi(y,z)=1+\va K(y,z)$. Since the case for non-perturbation in
Theorem 2.4 is similar, we will give a sketch to the proof of
Theorem 2.4 in Section 4.

 We will restrict our arguments to the existence of that
particular solution of \eqref{F} that concentrates, as $\va\to 0$,
at $\bar{\eta}^1,\bar{\eta}^2$, that is a solution of the form
$$
u_\varepsilon=\sum\limits_{j=1}^2
U_{\eta^j_\va,\lambda_{\va,j}}+v_\varepsilon
$$
with $\lambda_{\va,j} \to +\infty$, $\eta^j_\va\to \bar{\eta}^j$
\,($j=1,2$) and $\|v_\va\|\to 0$  as $\va\to 0$.

For $\eta=(\eta^1,\eta^2)\in \R^{h}\times\R^{h}$,
$\la=(\la_1,\la_2)\in \R_+\times\R_+$,  denote
\begin{equation}\label{1.7}
\begin{array}{ll}
  E_{\eta, \la}=\bigg\{v \in \D :  \bigg\langle \ds\frac{\pa \Uj}{\pa
  \la_j},v \bigg\rangle
  = \bigg\langle\ds\frac{\pa \Uj}{\pa
  \eta^j_i},v\bigg\rangle=0,\\
  \hspace{4cm}\,\,\,for\,\,\, j=1,2,\,\,\,i=1,\cdots,h
  \bigg\}.
\end{array}
\end{equation}

For each $(0,\bar{\eta}^1),\,(0,\bar{\eta}^2)\in\La$,
$\bar{\eta}^1\neq\bar{\eta}^2$, $\mu>0$, set
\begin{eqnarray*}
 D_\mu&=&\bigg\{(\eta,\la):\,\eta=(\eta^1,\eta^2)\in\overline{B_\mu(\bar{\eta}^1)}\times\overline{B_\mu(\bar{\eta}^2)}
 \subset \R^h\times\R^h,\,\,\\
 &&\hspace{3cm}\la=(\la_1,\la_2)\in\Bigl(\frac{1}{\mu},+\infty\Bigl)\times\Bigl(\frac{1}{\mu},+\infty\Bigl)
  \bigg\},\\
  M_\mu&=&\bigg\{(\eta,\la,v): (\eta,\la) \in D_\mu,\,\, v \in
  E_{\eta,\la},\,\,\|v\|< \mu \bigg\}.
\end{eqnarray*}

Let
\begin{equation}
  J(\eta,\la,v)=I(\sum\limits_{j=1}^2 \Uj+v).
\end{equation}
Now similar  to \cite{Ba,Rey1}, we have the following lemma.
\begin{lemma}
For $\mu>0$ small, $ u=\sum\limits_{j=1}^2\Uj+v $ is a positive
critical point of $I(u)$ in $\D$ if and only if $(\eta,\la,v)$ is
a critical point of $J(\eta,\la,v)$ in $M_\mu$.
\end{lemma}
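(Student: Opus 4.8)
The plan is to prove the standard Lyapunov–Schmidt equivalence: show that $u=\sum_{j=1}^2 U_{\eta^j,\la_j}+v$ is a critical point of $I$ in $\D$ if and only if $(\eta,\la,v)$ is a critical point of $J(\eta,\la,v)=I\bigl(\sum_{j=1}^2 U_{\eta^j,\la_j}+v\bigr)$ on the constraint manifold $M_\mu$. First I would set up the Lagrange-multiplier calculus. Write $W=\sum_{j=1}^2 U_{\eta^j,\la_j}$ and note that the constraint set $E_{\eta,\la}$ is the orthogonal complement (in the $\D$ inner product) of the $2(h+1)$-dimensional space
$$
T_{\eta,\la}=\opspan\Bigl\{\frac{\pa U_{\eta^j,\la_j}}{\pa\la_j},\ \frac{\pa U_{\eta^j,\la_j}}{\pa\eta^j_i}\ :\ j=1,2,\ i=1,\dots,h\Bigr\}.
$$
Since $(\eta,\la,v)$ is a critical point of $J$ restricted to $M_\mu$, the partial derivative $\pa_v J(\eta,\la,v)$ must annihilate $E_{\eta,\la}$; because $\pa_v J(\eta,\la,v)[\psi]=I'(W+v)[\psi]$ for $\psi\in E_{\eta,\la}$, there exist multipliers $A_{j},B_{j,i}\in\R$ such that
$$
I'\Bigl(\sum_{j=1}^2 U_{\eta^j,\la_j}+v\Bigr)=\sum_{j=1}^2\Bigl(A_j\,\frac{\pa U_{\eta^j,\la_j}}{\pa\la_j}+\sum_{i=1}^h B_{j,i}\,\frac{\pa U_{\eta^j,\la_j}}{\pa\eta^j_i}\Bigr).
$$
The remaining critical-point conditions $\pa_{\la_j}J=0$ and $\pa_{\eta^j_i}J=0$ give, after differentiating the constraint identities $\langle \pa_{\la_j}U_{\eta^j,\la_j},v\rangle=0$ etc. and using $\pa_v J=0$ in the $E_{\eta,\la}$ directions, a linear system for the multipliers whose coefficient matrix is (a perturbation of) the Gram matrix of $T_{\eta,\la}$.

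The second, and main, step is to show this coefficient matrix is invertible for $\mu$ small, so that all the $A_j,B_{j,i}$ vanish and hence $I'(W+v)=0$. The key estimates are the well-known near-orthogonality and almost-diagonal bounds for the bubbles $U_{\eta^j,\la_j}$: the functions $\la_j\,\pa_{\la_j}U_{\eta^j,\la_j}$ and $\la_j^{-1}\pa_{\eta^j_i}U_{\eta^j,\la_j}$ have norms bounded above and below by positive constants independent of the parameters, their mutual inner products for fixed $j$ are either zero by parity or uniformly small, and the cross terms between $j=1$ and $j=2$ are $o(1)$ because $\la_{\va,j}\to\infty$ forces the bubbles to separate (in the relevant $U_{\eta,\la}$ scaling the interaction decays). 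Combined with $\|v\|<\mu\to 0$ and the differentiability of $U_{\eta,\la}$ in its parameters, the full coefficient matrix is a small perturbation of a fixed invertible block-diagonal matrix, hence invertible; this forces all multipliers to be $0$. The converse direction is immediate: if $u=W+v$ with $v\in E_{\eta,\la}$ and $I'(u)=0$, then trivially $\pa_v J=\pa_{\la_j}J=\pa_{\eta^j_i}J=0$, so $(\eta,\la,v)$ is a critical point of $J$.

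I expect the main obstacle to be the quantitative control of the Gram-type matrix: one must verify that the almost-orthogonality constants and the decay of the bubble interactions are genuinely uniform over $D_\mu$ (uniform in $\la_j\ge 1/\mu$ and $\eta^j$ in compact balls), and that the error from $v$ enters at order $o(1)$ and not, say, at an order that could degenerate the matrix. These are exactly the kinds of pointwise and integral estimates collected in Section 5 (in particular the non-degeneracy of $U_{0,1}$ and the interaction estimates), so I would invoke those lemmas rather than reprove them, citing the analogous arguments in \cite{Ba,Rey1}. A secondary technical point is the positivity of $u$: one uses that $W>0$ and $\|v\|$ is small to conclude that the critical point obtained is genuinely positive (e.g. via a truncation argument or the maximum principle applied to the equation satisfied by $u$), which is why the statement restricts attention to positive critical points.
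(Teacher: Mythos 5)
Your proposal is correct and is essentially the argument the paper intends: the paper does not write out a proof of this lemma but simply points to \cite{Ba,Rey1}, and the standard Lyapunov--Schmidt equivalence you describe (Lagrange multipliers for the constrained problem, then killing the multipliers by inverting a rescaled, nearly block-diagonal Gram matrix of $\partial_{\la_j}U_{\eta^j,\la_j}$ and $\partial_{\eta^j_i}U_{\eta^j,\la_j}$, with the $\langle v,\partial^2 U\rangle$ corrections entering at order $O(\mu)$) is exactly what those references do, with the needed inner-product estimates supplied here by Lemma 5.8. Your closing remark on recovering positivity of $u=\sum_j U_{\eta^j,\la_j}+v$ from $\|v\|<\mu$ via testing with $u^-$ is also the right way to handle the word ``positive'' in the statement.
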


 On the other hand, it
follows from Lagrange multiplier theorem that $(\eta,\la,v) \in
M_\mu$ is a critical point of $J(\eta,\la,v)$ in the manifold
$M_\mu$ if and only if there are numbers $B_j \in \R$,  $C_{ji}
\in \R $ for $i=1,\cdots,h, j=1,2$ such that
\begin{equation}\label{2.11}
   \frac{\pa J(\eta,\la,v)}{\pa v}=\sum\limits_{j=1}^2B_j \frac{\pa \Uj}{\pa
  \la_j}+\sum_{j=1}^2\sum\limits_{i=1}^{h} C_{ji}\frac{\pa \Uj}
  {\pa \eta_i^j},
\end{equation}
\begin{equation}\label{1.11}
  \frac{\pa J (\eta,\la,v)}{\pa \la_j}
  =B_j\bigg\langle\frac{\pa^2 \Uj}{\pa
  \la_j^2},v \bigg\rangle+\sum_{l=1}^{h} C_{jl} \bigg\langle
  \frac{\pa^2 \Uj}{\pa \la_j \pa \eta^j_l},v
  \bigg\rangle,j=1,2,
\end{equation}
\begin{equation}\label{2.10}
\begin{array}{ll}
 \ds\frac{\pa J(\eta,\la,v)}{\pa \eta_i^j}=B_j \bigg\langle
 \ds\frac{\pa^2 \Uj}{\pa
  \la_j \pa \eta_i^j},v \bigg\rangle+& \sum\limits_{l=1}^{h} C_{jl} \bigg\langle
  \ds\frac{\pa^2 \Uj}{\pa \eta_l^j \pa \eta_i^j},v
  \bigg\rangle,\,\,j=1,2,\,i=1,\cdot\cdot\cdot,h.
\end{array}
\end{equation}

In order to verify Theorem 1.1, following the ideas of
\cite{Rey1}, we show first that for  $(\eta,\la)\in D_\mu$ given,
there exists $v\in E_{\eta, \la}$ and scalars
$B_j,C_{ji},i=1,\cdots, h, j=1,2$
 such that \eqref{2.11} is satisfied. Then as in \cite{CNY3}, we employ a degree argument to find
 suitable $(\eta,\la)\in D_\mu$ such that \eqref{1.11}, \eqref{2.10} are
satisfied.

Throughout this paper we will let
$\va_{ij}=(\la_i\la_j|\eta^i-\eta^j|^2)^{(2-N)/2}$ for $i\neq j$
and $C_{N,k}=[(N-2)(k-1)]^{N-1}$.

\begin{proposition}\label{3.2}For $\bar{\eta}^1,\,\bar{\eta}^2\in\La$ and $(\eta,\la)\in
D_\mu$, there exist $\va_0>0$, $\mu_0>0$ and a $C^1-$map which, to
any $(\eta,\la)\in D_\mu$,  $\va \in (0,\va_0]$, $\mu \in
(0,\mu_0]$, associates $v_\va(\eta,\la):D_\mu\to E_{\eta,\la}$
such that $v_\va(\eta,\la)$ satisfies \eqref{2.11}  for some
$B_j,C_{ji} (i=1,\cdot\cdot\cdot,h, j=1,2)$. Furthermore,
$v_\va(\eta,\la)$ satisfies the following estimate as $\va\to 0$
$$
\|v_\va(\eta,\la)\|=O\Bigl(\va\sum\limits_{j=1}^2\Bigl(|\eta^j-\bar{\eta}^j|^{\gamma_j}
+\frac{1}{\la_j^{\gamma_j}}\Bigl)\Bigl)+O(\va_{12}^{\frac{1}{2}
+\tau}),
$$
where $\tau>0$ is some constant.
\end{proposition}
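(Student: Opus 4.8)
The plan is to set up a standard Lyapunov--Schmidt finite-dimensional reduction on the Hilbert space $E_{\eta,\la}$. First I would rewrite the equation $\frac{\pa J(\eta,\la,v)}{\pa v}=\sum_j B_j\frac{\pa\Uj}{\pa\la_j}+\sum_{j,i}C_{ji}\frac{\pa\Uj}{\pa\eta_i^j}$, for fixed $(\eta,\la)\in D_\mu$, as a fixed-point problem $v=\mathcal{A}_{\eta,\la}(v)$ in $E_{\eta,\la}$. Concretely, write $J(\eta,\la,v)=J(\eta,\la,0)+\ell_{\eta,\la}(v)+\tfrac12 Q_{\eta,\la}(v)+R_{\eta,\la}(v)$, where $\ell_{\eta,\la}$ is the linear part, $Q_{\eta,\la}$ the quadratic part, and $R_{\eta,\la}$ collects the higher-order terms (coming from the superlinear nonlinearity $|u|^{\s}$ with $\s=2(N-1)/(N-2)>2$). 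The key analytic input is that the quadratic form $Q_{\eta,\la}$, restricted to $E_{\eta,\la}$, is invertible with inverse bounded uniformly in $(\eta,\la)\in D_\mu$ and $\va\in(0,\va_0]$; this is the nondegeneracy of the linearized operator $-\Delta - \tfrac{N}{N-2}\,\frac{U_{\eta^j,\la_j}^{2/(N-2)}}{|y|}\cdot$ at each bubble, modulo the $2(h+1)$ directions spanned by $\frac{\pa\Uj}{\pa\la_j},\frac{\pa\Uj}{\pa\eta_i^j}$, together with the fact that the two bubbles are far apart (so cross terms between the $j=1$ and $j=2$ blocks are $O(\va_{12}^{1/2+\tau})$, absorbable). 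I would cite the CR/Grushin nondegeneracy results (from \cite{Ca}, as the $U_{\zeta,\la}$ are taken from there) to justify the nondegeneracy on each single-bubble block.

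Granting the uniform invertibility of $Q_{\eta,\la}$ on $E_{\eta,\la}$, the contraction mapping theorem applied to $v\mapsto -Q_{\eta,\la}^{-1}(\ell_{\eta,\la}+ DR_{\eta,\la}(v))$ (projected onto $E_{\eta,\la}$) yields a unique small solution $v_\va(\eta,\la)$, and the implicit function theorem gives the $C^1$ dependence on $(\eta,\la)$. The scalars $B_j,C_{ji}$ are then recovered as the components of $\frac{\pa J}{\pa v}$ along the spanned directions, automatically. So the two things that actually require work are: (i) the estimate of $\|\ell_{\eta,\la}\|$ — i.e. bounding $\big\|P_{E_{\eta,\la}}\frac{\pa J(\eta,\la,0)}{\pa v}\big\|$ — and (ii) controlling the nonlinear remainder so the fixed-point radius matches the claimed bound.

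For (i), $\frac{\pa J(\eta,\la,0)}{\pa v}[\varphi]$ measures how far $W:=\Uj[1]+\Uj[2]$ is from solving \eqref{F}. There are two sources of error: the error from the potential perturbation, $\va\int_{\N}K(y,z)\frac{W^{N/(N-2)}}{|y|}\varphi$, and the error from the interaction of the two bubbles, $\int_{\N}\big(W^{N/(N-2)}-\sum_j U_{\eta^j,\la_j}^{N/(N-2)}\big)\frac{\varphi}{|y|}$. Using the flatness expansion \eqref{K} of $K$ near $\bar\eta^j$ and a change of variables that rescales the $j$-th bubble to $U_{0,1}$, the first contributes $O\big(\va\sum_j(|\eta^j-\bar\eta^j|^{\gamma_j}+\la_j^{-\gamma_j})\big)$ — here $\xi_i,a_i\ne0$ and $\gamma\in(1,N-2)$ make the integrals $\pi_1,\pi_2$ (and hence the leading coefficient) finite and nonzero, and the $O(|x-(0,\bar\eta)|^{\gamma+\sigma})$ tail is genuinely lower order; the region $|y|$ small needs care because of the Hardy weight, but $\gamma>1$ keeps everything integrable. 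The second, a classical bubble-interaction estimate, gives $O(\va_{12}^{1/2+\tau})$ after splitting $\N$ according to which bubble dominates and using $\int U_{\eta^1,\la_1}^{N/(N-2)}U_{\eta^2,\la_2}\lesssim\va_{12}$ with a small loss traded for an exponent strictly bigger than $\tfrac12$. For (ii), since $\s-1=N/(N-2)\le 2$ when $N\ge 4$, the map $v\mapsto DR_{\eta,\la}(v)$ is (locally) a contraction with a small Lipschitz constant near $v=0$, of order $\|v\|^{\min(1,\,\s-2)}$ or $\|v\|$, which closes the iteration at radius comparable to $\|\ell_{\eta,\la}\|$. The main obstacle, and where I would spend most of the effort, is establishing the uniform-in-$(\eta,\la,\va)$ coercivity of $Q_{\eta,\la}$ on $E_{\eta,\la}$ in the presence of the singular Hardy weight $1/|y|$ and two well-separated bubbles: one argues by contradiction, assuming a sequence $v_n\in E_{\eta_n,\la_n}$ with $\|v_n\|=1$ and $Q_{\eta_n,\la_n}(v_n)\to0$, rescaling around each bubble to extract weak limits that solve the limiting linearized equation, invoking single-bubble nondegeneracy to force each weak limit to vanish, and then showing $\|v_n\|\to0$ for a contradiction — the Hardy-weight compactness and the orthogonality bookkeeping across the two bubbles are the delicate points.
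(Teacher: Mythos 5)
Your proposal follows essentially the same route as the paper: the same expansion $J(\eta,\la,v)=J(\eta,\la,0)+\langle f_\va,v\rangle+\tfrac12\langle Q_\va v,v\rangle+R_\va(v)$, the same key input (uniform invertibility of $Q_\va$ on $E_{\eta,\la}$, proved by the contradiction/rescaling/nondegeneracy argument that is the paper's Proposition 3.3), the same fixed-point/implicit-function step in the style of Rey, and the same two-part estimate of $\|f_\va\|$ splitting the bubble-interaction error $O(\va_{12}^{1/2+\tau})$ from the potential-perturbation error $O\bigl(\va\sum_j(|\eta^j-\bar\eta^j|^{\gamma_j}+\la_j^{-\gamma_j})\bigr)$. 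This matches the paper's proof in both structure and substance.
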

\begin{proof}
We expand $J(\eta,\la,v)$ in the neighborhood $v=0$. For $v\in
E_{\eta,\la}$ we obtain
\begin{equation}\label{2.1}
  J(\eta,\la,v)=J(\eta,\la,0)+ \langle f_\va,v\rangle
  +\frac{1}{2}\langle Q_\va v,v\rangle+R_\va(v),
\end{equation}
where $f_\va \in E_{\eta,\la}$ is the linear form over
$E_{\eta,\la}$ given by
\begin{equation}
\begin{array}{ll}
 \langle f_\va,v\rangle =&\langle \sum\limits_{j=1}^{2}\Uj,v\rangle-\ds\int_{\N}\frac{1+\va
 K(x)}{|y|}
 \bigg(\sum\limits_{j=1}^2\Uj\bigg)
 ^{\frac{N}{N-2}}vdx,
\end{array}
\end{equation}
$\langle Q_\va v,v\rangle$ is the quadratic form on $E_{\eta,\la}$
given by
\begin{equation}\label{2.3}
\begin{array}{ll}
\langle Q_\va
v,v\rangle=&\|v\|^2-\ds\frac{N}{N-2}\ds\int_{\N}\frac{1+\va
K(x)}{|y|}
 \bigg(\sum\limits_{j=1}^2\Uj\bigg)
 ^{\frac{2}{N-2}}v^2dx,
\end{array}
\end{equation}
and $R_\va(v)$ is the  higher order term satisfying
$$
D^{(i)}R_\va(v)=O(\|v\|^{2+\theta-i}),\,\,i=1,2,
$$
where $\theta>0$ is some constant.

From Proposition 3.3, $Q_\va$ is invertible and $\|Q_\va^{-1}\|\leq
C$ for some $C>0$ independent of $\eta,\la$ and $\va$. Now following
the arguments in \cite{CNY3,Rey1} we have
$$
\frac{\pa J(\eta,\la,v)}{\pa v}\bigg|_{ E_{\eta,\la}}= f_\va+Q_\va
v+DR_\va(v).
$$
There exists an equivalence between the existence of $v$ such that
\eqref{2.11} holds for $(\eta,\la,v)$ and
\begin{equation}\label{2.4}
  f_\va+Q_\va v+DR_\va(v)=0.
\end{equation}
We are now in the position to use the argument  in \cite {Rey1} to
establish the existence of $v(\eta,\la)$ such that (3.3) is
satisfied for some  numbers $B_j,C_{ji} (i=1,\cdot\cdot\cdot,h,
j=1,2)$. Moreover, there exists a constant $C>0$ such that
\begin{equation}
  \|v\|\leq C\|f_\va\|.
\end{equation}

Now we estimate $\|f_\va\|$. Note that
$$
\begin{array}{ll}
\langle
f_\va,v\rangle=&\ds\int_{\N}\frac{1}{|y|}\Bigl(\sum\limits_{j=1}^2\Uj^{\frac{N}{N-2}}-(\sum\limits_{j=1}^2\Uj)^{\frac{N}{N-2}}
\Bigl)vdx\\
&-\va\ds\int_{\N}\frac{K(x)}{|y|}\Bigl(\sum\limits_{j=1}^2\Uj\Bigl)^{\frac{N}{N-2}}vdx.
\end{array}
$$
On the other hand, by H\"{o}lder inequality and Lemma 5.1 in the
appendix,
\begin{eqnarray*}
&&\Bigl|\int_{\N}\frac{1}{|y|}\Bigl(\sum\limits_{j=1}^2\Uj^{\frac{N}{N-2}}-(\sum\limits_{j=1}^2\Uj)^{\frac{N}{N-2}}
\Bigl)vdx\Bigl|\\
&=&\left\{\begin{array}{ll} O\Bigl(\sum\limits_{i\neq
j}\ds\int_{\N}\frac{1}{|y|}\Ui^{\frac{N}{2(N-2)}}\Uj^{\frac{N}{2(N-2)}}|v|dx\Bigl)\,\,\,\,(2<\s<3)\\
O\Bigl(\sum\limits_{i\neq
j}\ds\int_{\N}\frac{1}{|y|}\Ui^{\frac{2}{N-2}}\Uj|v|dx\Bigl)\,\,\,\,(\s\geq
3)
\end{array}
\right.\\
&=& \left\{\begin{array}{ll} O\Bigl(\sum\limits_{i\neq
j}\Bigl(\ds\int_{\N}\frac{1}{|y|}\Ui^{\frac{N-1}{N-2}}\Uj^\frac{N-1}{N-2}dx\Bigl)^{\frac{N}{2(N-1)}}\Bigl)\|v\|\,\,\,\,(2<\s<3)\\
O\Bigl(\sum\limits_{i\neq
j}\Bigl(\ds\int_{\N}\frac{1}{|y|}\Ui^{\frac{4(N-1)}{N(N-2)}}\Uj^{\frac{2(N-1)}{N}}dx\Bigl)
^{\frac{N}{2(N-1)}}\Bigl)\|v\| \,\,\,\,(\s\geq 3)
\end{array}
\right.\\
 &=&O(\va_{12}^{\frac{1}{2} +\tau})\|v\|.
\end{eqnarray*}
Similarly,
\begin{eqnarray*}
&&\Bigl|\ds\int_{\N}\frac{K(x)}{|y|}\Bigl(
\sum\limits_{j=1}^2\Uj\Bigl)^{\frac{N}{N-2}}vdx\Bigl|\\
&=&\Bigl|\sum\limits_{j=1}^2\ds\int_{\N}\frac{K(x)}{|y|}\Uj^{\frac{N}{N-2}}vdx\Bigl|+O(\va_{12}^{\frac{1}{2}
+\tau})\|v\|\\
&=&O\Bigl(\sum\limits_{j=1}^2\Bigl(
|\eta^j-\bar{\eta}^j|^{\gamma_j}+\frac{1}{\la_j^{\gamma_j}}\Bigl)\Bigl)\|v\|+O(\va_{12}^{\frac{1}{2}
+\tau})\|v\|.
\end{eqnarray*}
As a result, combining the above three equations, we complete the
proof.
\end{proof}

\begin{proposition}\label{2.1}Let $(\eta,\la) \in D_\mu$. Then for $\mu>0$ ,
$\va>0$ sufficiently small, there exists a $\rho>0$ such that
$$
\|Q_\va \om\|\geq\rho\|\om\|,\,\,\,\,\forall\,\,\,\om\in
E_{\eta,\la}.
$$
\end{proposition}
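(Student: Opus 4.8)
The plan is to argue by contradiction via a blow-up analysis at the two bubbles. If the claim fails, there are $\va_n\to0^+$, points $(\eta_n,\la_n)\in D_\mu$ and functions $\om_n\in E_{\eta_n,\la_n}$ with $\|\om_n\|=1$ and $\|Q_{\va_n}\om_n\|\to0$. Pairing the identity for $\langle Q_{\va_n}\om_n,\om_n\rangle$ with $\om_n$ itself gives $|\langle Q_{\va_n}\om_n,\om_n\rangle|\le\|Q_{\va_n}\om_n\|\to0$, hence
$$
\frac{N}{N-2}\int_{\N}\frac{1+\va_nK(x)}{|y|}\Bigl(\sum_{j=1}^2U_{\eta_n^j,\la_{n,j}}\Bigr)^{\frac{2}{N-2}}\om_n^2\,dx\ \longrightarrow\ 1 .
$$
It will suffice to prove that, once $\mu$ has been fixed small, the left-hand side actually converges to $0$, which is the desired contradiction.

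For $j=1,2$ set $w_n^{(j)}(x)=\la_{n,j}^{-\frac{N-2}{2}}\om_n\bigl(\la_{n,j}^{-1}y,\,\la_{n,j}^{-1}z+\eta_n^j\bigr)$; this is exactly the dilation $x\mapsto\la_{n,j}x$ together with the $z$-translation by $\eta_n^j$ leaving \eqref{L} invariant, so it preserves $\|\cdot\|$ and $\|w_n^{(j)}\|=1$, whence along a subsequence $w_n^{(j)}\rightharpoonup w^{(j)}$ in $\D$ and $w_n^{(j)}\to w^{(j)}$ in $L^2_{loc}$. The constraints in \eqref{1.7} are invariant under this rescaling, so $w^{(j)}$ is orthogonal in $\D$ to $\pa_\la U_{0,1}$ and $\pa_{\eta_i}U_{0,1}$, $i=1,\dots,h$ (derivatives at the base parameters). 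Testing $\langle Q_{\va_n}\om_n,\cdot\rangle=o(1)$ against unscaled functions $\psi\in C_0^\infty(\N)$ centred at the $j$-th bubble and rescaling, the $\va_nK$-term vanishes and so does the contribution of the \emph{other} bubble: on its own scale the $j$-th bubble has width $\le\mu$, while $|\eta_n^1-\eta_n^2|\ge|\bar\eta^1-\bar\eta^2|-2\mu>0$, so after rescaling the second bubble leaves every fixed compact set (or shrinks to a point there). Thus $w^{(j)}$ is a weak $\D$-solution of the model linearized equation $-\Delta w=\frac{N}{N-2}|y|^{-1}U_{0,1}^{\frac{2}{N-2}}w$. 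By the non-degeneracy of $U_{0,1}$ as a solution of \eqref{L} --- the kernel of its linearization in $\D$ being spanned exactly by $\pa_\la U_{0,1}$ and $\pa_{\eta_i}U_{0,1}$, $i=1,\dots,h$ --- together with the orthogonality above, $w^{(j)}=0$ for $j=1,2$.

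It remains to convert this into decay of the integral. Since $2/(N-2)\le1$ for $N>3$, one has $\bigl(\sum_jU_{\eta_n^j,\la_{n,j}}\bigr)^{2/(N-2)}\le\sum_jU_{\eta_n^j,\la_{n,j}}^{2/(N-2)}+C\,(\text{cross terms})$, and the cross terms integrated against $|y|^{-1}\om_n^2$ are $O(\va_{12}^{\,\tau})$ by H\"older's inequality and the estimates of Section~5, hence negligible once $\mu$ is small; the $\va_nK$-term is $\le\va_n\|K\|_\infty\,O(1)\to0$. For the leading terms, the change of variables gives
$$
\int_{\N}\frac{U_{\eta_n^j,\la_{n,j}}^{2/(N-2)}}{|y|}\om_n^2\,dx=\int_{\N}\frac{U_{0,1}^{2/(N-2)}}{|y|}\bigl(w_n^{(j)}\bigr)^2\,dx\ \longrightarrow\ 0 ,
$$
because $w_n^{(j)}\rightharpoonup0$ in $\D$ and the quadratic form $w\mapsto\int|y|^{-1}U_{0,1}^{2/(N-2)}w^2$ is weakly continuous: splitting $\N$ into $\{|x|>R\}$ (small by the convergent tail of $\int|y|^{-1}U_{0,1}^{2(N-1)/(N-2)}$), $\{|x|<R,\,|y|>\delta\}$ (bounded weight, Rellich), and $\{|x|<R,\,|y|<\delta\}$ (Hardy--Sobolev together with $\int_{|y|<\delta}|y|^{-1}dy\to0$, valid since $k\ge2$) bounds it uniformly in $n$ by a quantity tending to $0$. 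Collecting everything, the left side of the first display tends to $0$, contradicting its limit $=1$; hence $\|Q_\va\om\|\ge\rho\|\om\|$. Since $Q_\va$ is self-adjoint on $E_{\eta,\la}$, this also makes $Q_\va$ invertible with $\|Q_\va^{-1}\|\le1/\rho$, as needed for the reduction.

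The step I expect to be hardest is the blow-up limit: controlling the interaction of the two bubbles under a rescaling adapted to only one of them (which forces $\mu$ to be chosen small, and in the regime where all the $\la_{n,j}$ stay bounded requires the separate input that the limiting one- or two-bubble linearization is non-degenerate for $\mu$ small, plus a compactness/continuity argument), and correctly identifying $w^{(j)}$ as an element of the kernel of the model linearization. The secondary technical point is the weight-sensitive compactness $\int|y|^{-1}U_{0,1}^{2/(N-2)}(w_n^{(j)})^2\to0$ near the singular set $\{y=0\}$, where the hypothesis $k\ge2$ and the decay of $U_{0,1}$ are used.
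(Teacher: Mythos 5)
Your proposal is correct and follows essentially the same route as the paper: contradiction, rescaling at each bubble, weak limit solving the model linearized equation, orthogonality plus the non-degeneracy of $U_{0,1}$ forcing the limit to vanish, and then weighted local compactness to kill the potential term and contradict $\|\om_n\|=1$. The only point the paper treats more explicitly is that, since $\langle Q_{\va}\om_n,\cdot\rangle$ is only controlled on $E_{\eta,\la}$, a test function $\psi\in C_0^\infty$ must first be projected onto the constrained space (the correction coefficients are then shown to be harmless because $\pa_\la U_{0,1}$ and $\pa_{\eta_i}U_{0,1}$ lie in the kernel of the linearization); your plan would need that small step filled in.
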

\begin{proof}
By the boundedness of $K(x)$, it suffices to prove  the
proposition for the case $\va=0$. The main idea of the proof is
similar to that of Lemma 2.3 in \cite{Li}.

We argue by contradiction. Suppose that there are $\mu_n\to 0$,
$(\eta^n,\la_n)\in D_{\mu_n}$ and $\om_n\in E_{\eta^n,\la_n}$ such
that
\begin{equation}\label{h}
\|Q_0\om_n\|=o(1)\|\om_n\|,
\end{equation}
where $o(1)\to 0$ as $n\to\infty$. In \eqref{h}, we may assume
$\|\om_n\|=1$.

For $j=1,2$, let
$\tilde{\om}_{j,n}(x)=\la^{(2-N)/2}_{j,n}\om_n(\la_{j,n}^{-1}x+(0,\eta^{j,n}))$.
Then $\tilde{\om}_{j,n}(x)$ is bounded in $\D$. Hence we may
assume that there is $\om_j\in\D$ such that as $n\to\infty$,
\begin{equation}\label{i}
\tilde{\om}_{j,n}\rightharpoonup
\om_j,\,\,\,\,\,\hbox{weakly}\,\,\,\,\hbox{in}\,\,\,\,\D.
\end{equation}

Now we verify that $\om_j=0$.

Define
\begin{eqnarray*}
\tilde{U}_{j,n}&=&\la^{(2-N)/2}_{j,n}U_{\eta^{j,n},\la_{j,n}}(\la_{j,n}^{-1}x+(0,\eta^{j,n}))\\
W_{j,n,i}&=&\la^{-N/2}_{j,n}\frac{\pa
U_{\eta^{j,n},\la_{j,n}}(P)}{\pa\eta^{j,n}_i}\bigl|_{P=\la_{j,n}^{-1}x+(0,\eta^{j,n})},\,\,\,i=1,\cdot\cdot\cdot,h,\\
W_{j,n}&=&\la^{(4-N)/2}_{j,n}\frac{\pa
U_{\eta^{j,n},\la_{j,n}}(P)}{\pa\la^{j,n}}\bigl|_{P=\la_{j,n}^{-1}x+(0,\eta^{j,n})}.
\end{eqnarray*}
$\om_n\in E_{\eta^n,\la_n}$ implies that
$$
\tilde{\om}_{j,n}\in \tilde{E}_{n}\triangleq
\Bigl\{\varphi\in\D:\,\,\,\int_{\N}\nabla W_{j,n}\nabla\varphi
dx=\int_{\N}\nabla W_{j,n,i}\nabla\varphi dx=0\Bigl\},
$$
$j=1,2,\,\,i=1,\cdot\cdot\cdot,h$, and
\begin{equation}\label{i}
\begin{array}{ll}
&\ds\int_{\N} \nabla \tilde{\om}_{j,n} \nabla\varphi
dx-\ds\frac{N}{N-2}\ds\int_{\N}\frac{1}{|y|}
  \tilde{U}_{j,n}
  ^{\frac{2}{N-2}}\tilde{\om}_{j,n} \varphi
  dx\\
=&o(1)\|\varphi\|,\,\,\forall\,\,\, \varphi\in \tilde{E}_{n}.
\end{array}
\end{equation}

We claim that $\om_j$ solves
\begin{equation}\label{j}
-\Delta
\om_j-\frac{N}{N-2}\frac{U_{0,1}^{\frac{2}{N-2}}}{|y|}\om_j=0.
\end{equation}
Indeed, for any $\varphi\in C_0^\infty(\N)$, we can find $c_{j,n}$
and $c_{j,n,i}$ such that
$$
\varphi_n=\varphi-\sum\limits_{j=1}^2\sum\limits_{i=1}^h
c_{j,n,i}W_{j,n,i}-\sum\limits_{j=1}^2c_{j,n}W_{j,n}\in\tilde{E}_n.
$$
Since $\varphi$ has compact support and the support of $W_{l,n,i}$
and $W_{l,n}$ moves to infinity as $n\to\infty$ for $l\neq j$, it
is easy to check that $c_{l,n,i}\to 0$, $c_{l,n}\to 0$ as $n\to
\infty$ for any $l\neq j$. Moreover, $c_{j,n,i}$ and $c_{j,n}$ are
bounded.

Inserting $\varphi_n$ into \eqref{i} and letting $n\to\infty$, we
see
\begin{equation}\label{k}
\begin{array}{ll}
&\ds\int_{\N}\nabla\om_j \nabla \varphi
dx-\frac{N}{N-2}\int_{\N}\frac{U_{0,1}^{\frac{2}{N-2}}}{|y|}\om_j\varphi
dx\\
&-c\ds\int_{\N}\Bigl[ \nabla\om_j \nabla \Bigl(\frac{\pa
U_{0,1}}{\pa\la}\Bigl|_{\la=1}\Bigl)
-\frac{N}{N-2}\frac{U_{0,1}^{\frac{2}{N-2}}}{|y|}\om_j\Bigl(\frac{\pa
U_{0,1}}{\pa\la}\Bigl|_{\la=1}\Bigl)\Bigl] dx\\
&-\sum\limits_{i=1}^2c_i \Bigl[\ds\int_{\N} \nabla\om_j \nabla
\Bigl(\frac{\pa U_{0,1}}{\pa \eta_i}\Bigl|_{\eta=0}\Bigl)
dx-\frac{N}{N-2}\frac{U_{0,1}^{\frac{2}{N-2}}}{|y|}\om_j\Bigl(\frac{\pa
U_{0,1}}{\pa\eta_i}\Bigl|_{\eta=0}\Bigl)\Bigl] dx=0,
\end{array}
\end{equation}
where $c=\lim_{n\to\infty}c_{j,n}$ and
$c_i=\lim_{n\to\infty}c_{j,n,i}$. On the other hand,
\begin{eqnarray*}
&&\int_{\N}\Bigl[ \nabla\om_j \nabla \Bigl(\frac{\pa
U_{0,1}}{\pa\la}\Bigl|_{\la=1}\Bigl)
-\frac{N}{N-2}\frac{U_{0,1}^{\frac{2}{N-2}}}{|y|}\om_j\Bigl(\frac{\pa
U_{0,1}}{\pa\la}\Bigl|_{\la=1}\Bigl)\Bigl] dx=0\\
&&\int_{\N} \nabla\om_j \nabla \Bigl(\frac{\pa U_{0,1}}{\pa
\eta_i}\Bigl|_{\eta=0}\Bigl)
dx-\frac{N}{N-2}\frac{U_{0,1}^{\frac{2}{N-2}}}{|y|}\om_j\Bigl(\frac{\pa
U_{0,1}}{\pa\eta_i}\Bigl|_{\eta=0}\Bigl)\Bigl] dx=0.
\end{eqnarray*}
Therefore, we obtain
$$
\int_{\N}\nabla\om_j \nabla \varphi
dx-\frac{N}{N-2}\int_{\N}\frac{U_{0,1}^{\frac{2}{N-2}}}{|y|}\om_j\varphi
dx=0,\,\,\,\,\forall \,\,\varphi\in C^\infty_0(\N),
$$
which implies \eqref{j}.

Proceeding as done to prove  \eqref{k}, we see from
$\tilde{\om}_{j,n}\in\tilde{E}_n$ that
$$
\Bigl\langle\om_j,\frac{\pa
U_{0,1}}{\pa\la}\Bigl|_{\la=1}\Bigl\rangle=\Bigl\langle\om_j,\frac{\pa
U_{0,1}}{\pa\eta_i}\Bigl|_{\eta=0}\Bigl\rangle=0,\,\,\,i=1,\cdot\cdot\cdot,h.
$$
But, it is verified in \cite{Ca} that $U_{0,1}$ is non-degenerate.
As a result, we can conclude that $\om_j=0$.

Now since $\om_j=0$, we see that
\begin{eqnarray*}
\int_{\N}\frac{1}{|y|}
  \bigg(\sum\limits_{j=1}^2 U_{\eta^{j,n},\la_{j,n}}
  \bigg)^{\frac{2}{N-2}}\om_n^2dx
&\leq& \sum\limits_{j=1}^2\int_{\N\setminus
B_{R/\la_{j,n}}(0,\eta^{j,n})}\frac{1}{|y|}
  U_{\eta^{j,n},\la_{j,n}}^{\frac{2}{N-2}}\om_n^2dx+o(1)\\
  &=&o_R(1)+o(1),
\end{eqnarray*}
where $o_R(1)\to 0$ as $R\to+\infty$. Hence \eqref{h} implies that
$$
\int_{\N}|\nabla \om_n|^2dx\to 0,\,\,\,(n\to\infty).
$$
This is a contradiction to $\|\om_n\|=1$.
\end{proof}

\section{Proof of the main results}
To solve system \eqref{1.11} and \eqref{2.10}, we need to estimate
each term in them.
\begin{lemma}\label{3.1}  Let $(\eta,\la) \in D_\mu$,
$v(\eta,\la)$ be obtained in Proposition 3.2. For $\mu
>0$ and $\va >0$ small enough, we have for $i=1,2,$
\begin{eqnarray*}
 \frac{\pa J(\eta,\la,v)}{\pa \la_i}
 &=&-C_{N,k}b_1\frac{\va_{12}}{\la_i}-\frac{(N-2)C_{N,k}}{2\la_i^{\gamma_i+1}}
\Bigl[\frac{b^i_2}{k}\sum_{j=1}^{k}\xi^i_j
+\frac{b^i_3}{h}\sum_{j=1}^{h}a^i_j\Bigl]\va+O\Bigl(\frac{\va\va_{12}}{\la_i}\Bigl)\\
&&+O\Bigl(\frac{\va^{1+\tau}_{12}}{\la_i}\Bigl)
+O\Bigl(\frac{\va}{\la_i}\sum\limits_{j=1}^2
\Bigl(\frac{1}{\la_j^{\gamma_j+\sigma}}
+|\eta^j-\bar{\eta}^j|^{\gamma_j+\sigma}\Bigl)\Bigl)+O\Bigl(\frac{\va}{\la_i^{\gamma_i}}|\eta_i-\bar{\eta}^i|\Bigl),
\end{eqnarray*}
where, $b_1<0,\,b_2^i<0,\,b_3^i<0$ are defined in Lemmas 5.2 and
5.3 in the Appendix.
\end{lemma}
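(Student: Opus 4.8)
The plan is to compute $\pa J(\eta,\la,v)/\pa\la_i$ by differentiating the expansion \eqref{2.1} of $J(\eta,\la,v)$ in $v$ and exploiting that $v=v_\va(\eta,\la)$ is small (Proposition 3.2). Write $u=\sum_{j=1}^2\Uj+v$ and split
\[
\frac{\pa J(\eta,\la,v)}{\pa\la_i}=\frac{\pa J(\eta,\la,0)}{\pa\la_i}+\Bigl\langle f_\va,\frac{\pa v}{\pa\la_i}\Bigl\rangle+\Bigl\langle \frac{\pa f_\va}{\pa\la_i},v\Bigl\rangle+O(\|v\|^2).
\]
Since $v\in E_{\eta,\la}$, the term $\langle f_\va,\pa v/\pa\la_i\rangle$ does not vanish automatically ($\pa v/\pa\la_i$ need not lie in $E_{\eta,\la}$), but using \eqref{2.4} it equals $-\langle Q_\va v+DR_\va(v),\pa v/\pa\la_i\rangle=O(\|v\|^2)$ after noting $\|\pa v/\pa\la_i\|$ is controlled. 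Hence the leading contributions come from $\pa J(\eta,\la,0)/\pa\la_i$ and from $\langle \pa f_\va/\pa\la_i,v\rangle$, and by Proposition 3.2 the latter is of order $\va\|v\|/\la_i\lesssim \frac{\va}{\la_i}\sum_j(|\eta^j-\bar\eta^j|^{\gamma_j}+\la_j^{-\gamma_j})+\va_{12}^{1/2+\tau}/\la_i$, which is absorbed into the stated remainder terms.

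The main computation is therefore the expansion of $\pa J(\eta,\la,0)/\pa\la_i=\pa I(\sum_j\Uj)/\pa\la_i$. I would write
\[
I\Bigl(\sum_{j=1}^2\Uj\Bigr)=\sum_{j=1}^2 I(\Uj)-\frac{N-2}{2(N-1)}\int_{\N}\frac{1}{|y|}\Bigl[\Bigl(\sum_j\Uj\Bigr)^{\s}-\sum_j\Uj^{\s}\Bigr]dx-\frac{\va(N-2)}{2(N-1)}\int_{\N}\frac{K(x)}{|y|}\Bigl(\sum_j\Uj\Bigr)^{\s}dx,
\]
using that each $\Uj$ solves the limiting equation \eqref{L} so that $I(\Uj)$ is independent of $\la_j$, whence $\pa I(\Uj)/\pa\la_i=0$. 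Differentiating the interaction term in $\la_i$ and invoking Lemma 5.2 gives the term $-C_{N,k}b_1\va_{12}/\la_i$ with $b_1<0$, with the $\va$-correction from replacing $1$ by $1+\va K$ in that integral contributing $O(\va\va_{12}/\la_i)$ and the next order in the interaction expansion contributing $O(\va_{12}^{1+\tau}/\la_i)$. Differentiating the curvature term, one splits $(\sum_j\Uj)^{\s}=\sum_j\Uj^{\s}+O(\Ui^{2/(N-2)}\Uj+\cdots)$; the cross terms give again $O(\va\va_{12}/\la_i)$, while the diagonal terms $\int K(x)|y|^{-1}\Uj^{\s}dx$ are expanded using the flatness hypothesis \eqref{K}: substitute $x=\la_j^{-1}\bx+(0,\eta^j)$, Taylor-expand $K$ around $(0,\bar\eta^j)$ using \eqref{K}, and keep the $|y|^\gamma$ and $|z_i-\bar\eta_i|^\gamma$ terms. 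The constant $K(0,\bar\eta^j)$ and the critical identity make the $\la_j$-derivative of the leading piece vanish; the $\gamma$-homogeneous pieces produce, after differentiating in $\la_i$, the term $-\frac{(N-2)C_{N,k}}{2\la_i^{\gamma_i+1}}\bigl[\frac{b_2^i}{k}\sum_j\xi_j^i+\frac{b_3^i}{h}\sum_j a_j^i\bigr]\va$ with $b_2^i,b_3^i<0$ from Lemma 5.3; the error $O(|x-(0,\bar\eta)|^{\gamma+\sigma})$ in \eqref{K} together with $|\eta^j-\bar\eta^j|\le\mu$ yields the remainder $O\bigl(\frac{\va}{\la_i}\sum_j(\la_j^{-\gamma_j-\sigma}+|\eta^j-\bar\eta^j|^{\gamma_j+\sigma})\bigr)$, and differentiating the cross term $|\eta_i-\bar\eta^i|^{\gamma_i}$-type contribution in $\la_i$ gives $O(\va\la_i^{-\gamma_i}|\eta_i-\bar\eta^i|)$.

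The main obstacle I anticipate is controlling the mixed terms: precisely, showing that $\langle f_\va,\pa v/\pa\la_i\rangle$ and the cross-interaction contributions to $\pa I(\sum_j\Uj)/\pa\la_i$ are genuinely of lower order than $\va_{12}/\la_i$ and $\va\la_i^{-\gamma_i-1}$, which requires the sharp integral estimates of Lemmas 5.1–5.3 (and the exponent bookkeeping distinguishing $2<\s<3$ from $\s\ge3$ as already seen in the proof of Proposition 3.2) together with a bound on $\|\pa v_\va(\eta,\la)/\pa\la_i\|$ obtained by differentiating the identity \eqref{2.4} and using the uniform invertibility of $Q_\va$ from Proposition 3.3. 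Once these estimates are in place, collecting terms gives exactly the claimed expansion.
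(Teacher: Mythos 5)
Your expansion reaches the same leading terms as the paper, via the same key integral estimates (Lemmas 5.1--5.4), but the route through the $v$-dependent corrections is genuinely different, and less economical. The paper never differentiates $v$ in $\la_i$: it computes the partial derivative $\frac{\pa J}{\pa\la_i}$ directly as $\bigl\langle I'(\sum_j\Uj+v),\frac{\pa\Ui}{\pa\la_i}\bigr\rangle$, expands the nonlinearity around $\sum_j\Uj$, uses the equation for $\Uj$ to cancel $\sum_j\langle\Uj,\frac{\pa\Ui}{\pa\la_i}\rangle$ against the diagonal part of $\int\frac{1}{|y|}(\sum_j\Uj)^{\frac{N}{N-2}}\frac{\pa\Ui}{\pa\la_i}$, and then controls the single linear-in-$v$ term by Lemma 5.4; the quadratic remainder is $O(\|v\|^2/\la_i)$ with no derivative of $v$ needed. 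Your version differentiates the reduced expansion \eqref{2.1} and therefore must (a) bound $\|\pa v/\pa\la_i\|$ by differentiating the fixed-point equation, and (b) justify applying the identity $f_\va=-(Q_\va v+DR_\va(v))$, which holds only on $E_{\eta,\la}$, to $\pa v/\pa\la_i$, which does not lie in $E_{\eta,\la}$; the component of $\pa v/\pa\la_i$ along $\frac{\pa\Uj}{\pa\la_j}$, $\frac{\pa\Uj}{\pa\eta^j_l}$ must be estimated separately via the differentiated orthogonality relations. Both steps are doable but are exactly the extra work the paper's pairing argument avoids. A second caveat: what you compute is the \emph{total} derivative of $\la\mapsto J(\eta,\la,v(\eta,\la))$, whereas the quantity in the lemma (and in \eqref{1.11}) is the \emph{partial} derivative with $v$ frozen; the two differ by the multiplier terms $B_i\langle\frac{\pa^2\Ui}{\pa\la_i^2},v\rangle+\sum_l C_{il}\langle\frac{\pa^2\Ui}{\pa\la_i\pa\eta^i_l},v\rangle$, which by Lemmas 4.3 and 5.8 are absorbed into the stated remainders, so your final expansion is still the claimed one --- but this identification should be stated rather than left implicit. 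Finally, in your energy decomposition of $I(\sum_j\Uj)$ the gradient cross term $\langle U_{\eta^1,\la_1},U_{\eta^2,\la_2}\rangle$ is missing from the right-hand side as written; it is what combines with the bracket $[(\sum_j\Uj)^{\s}-\sum_j\Uj^{\s}]$ to produce the $b_1\va_{12}$ interaction of Lemma 5.2, so it must be kept.
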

\begin{proof} By direct computation and Lemmas 5.2-5.4, we have
 \begin{eqnarray*}
 &&\frac{\pa J(\eta,\la,v)}{\pa \la_i}\\
 &=&\Bigl\langle\sum\limits_{j=1}^2\Uj+v,\frac{\pa\Ui}{\pa\la_i}\Bigl\rangle\\
 &&-\int_{\N}\frac{1+\va K(x)}{|y|}\Bigl|\sum\limits_{j=1}^2\Uj+v\Bigl|^{\frac{2}{N-2}}
 \Bigl(\sum\limits_{j=1}^2\Uj+v\Bigl)
 \frac{\pa\Ui}{\pa\la_i}dx\\
 &=&\sum\limits_{j=1}^2\Bigl\langle\Uj,\frac{\pa\Ui}{\pa\la_i}\Bigl\rangle
-\int_{\N}\frac{1+\va
h(x)}{|y|}\Bigl(\sum\limits_{j=1}^2\Uj\Bigl)^{\frac{N}{N-2}}\frac{\pa\Ui}{\pa\la_i}dx\\
&&-\frac{N}{N-2}\int_{\N}\frac{1+\va
h(x}{|y|}\Bigl(\sum\limits_{j=1}^2\Uj\Bigl)^{\frac{2}{N-2}}\frac{\pa\Ui}{\pa\la_i}vdx
+O\Bigl(\frac{1}{\la_i}\Bigl)\|v\|^2\\
&=&\int_{\N}\frac{1}{|y|}\Bigl(\sum\limits_{j=1}^2\Uj^{\frac{N}{N-2}}
-\Bigl(\sum\limits_{j=1}^2\Uj\Bigl)^{\frac{N}{N-2}}\Bigl)\frac{\pa\Ui}{\pa\la_i}dx\\
&&-\va\int_{\N}
\frac{K(x)}{|y|}\Bigl(\sum\limits_{j=1}^2\Uj\Bigl)^{\frac{2}{N-2}}\frac{\pa\Ui}{\pa\la_i}dx\\
&&-\frac{N}{N-2}\int_{\N}\frac{1+\va
K(x)}{|y|}\Bigl(\sum\limits_{j=1}^2\Uj\Bigl)^{\frac{2}{N-2}}\frac{\pa\Ui}{\pa\la_i}vdx
+O\Bigl(\frac{1}{\la_i}\Bigl)\|v\|^2\\
&=&-\frac{N}{N-2}\int_{\N}\frac{1}{|y|}\Ui^{\frac{2}{N-2}}\frac{\pa\Ui}{\pa\la_i}\Uj
dx-\va\int_{\N} \frac{K(x)}{|y|}\Ui^{\frac{N}{N-2}}\frac{\pa\Ui}{\pa\la_i}dx\\
&&+O\Bigl(\frac{\va\va_{12}}{\la_i}\Bigl)+O\Bigl(\frac{\va^{1+\tau}_{12}}{\la_i}\Bigl)
+O\Bigl(\frac{\va}{\la_i}\sum\limits_{j=1}^2
\Bigl(\frac{1}{\la_j^{\gamma_j+\sigma}}
+|\eta^j-\bar{\eta}^j|^{\gamma_j+\sigma}\Bigl)\Bigl)\\
&=&-C_{N,k}b_1\frac{\va_{12}}{\la_i}-\frac{(N-2)C_{N,k}}{2\la_i^{\gamma_i+1}}
\Bigl[\frac{b^i_2}{k}\sum_{j=1}^{k}\xi^i_j
+\frac{b^i_3}{h}\sum_{j=1}^{h}a^i_j\Bigl]\\
&&+O\Bigl(\frac{\va\va_{12}}{\la_i}\Bigl)+O\Bigl(\frac{\va^{1+\tau}_{12}}{\la_i}\Bigl)
+O\Bigl(\frac{\va}{\la_i}\sum\limits_{j=1}^2
\Bigl(\frac{1}{\la_j^{\gamma_j+\sigma}}
+|\eta^j-\bar{\eta}^j|^{\gamma_j+\sigma}\Bigl)\Bigl)+O\Bigl(\frac{\va}{\la_i^{\gamma_i}}|\eta_i-\bar{\eta}^i|\Bigl).\\
\end{eqnarray*}
\end{proof}
Similar to the proof of Lemma 4.1, using the estimates in Lemmas
5.5-5.7 in the Appendix  we obtain
\begin{lemma}\label{3.2}\hspace{0.5cm}  Under the same assumption as in Lemma 4.1, we have for $j,l=1,2,\,j\neq
l$, $i=1,\cdot\cdot\cdot,h$
\begin{eqnarray*}
\frac{\pa J(\eta,\la,v)}{\pa \eta^j_i}
 &=&-\frac{b_4^ja^j_i}{\la_j^{\gamma_j-2}}\va(\eta^j_i-\bar{\eta}^j_i)
 -C(\eta^j_i-\eta^l_i)\va_{12}+O\Bigl(\va\frac{\la^2_j|\eta^j-\bar{\eta}^j|^2}{\la_j^{\gamma_j-1}}\Bigl)
\\
 &&+O\Bigl(\va\sum\limits_{l=1}^2\Bigl(\frac{1}{\la_l^{\gamma_l-1+\sigma}}+\la_l|\eta^l-\bz^l|^{\gamma_l+\sigma}\Bigl)\Bigl)
 +O(\va\la_j\va_{12})+O(\la_j\va_{12}^{1+\tau}),
\end{eqnarray*}
where $C$ is a positive constant and $b_4^j$ is defined in Lemma
5.6.
\end{lemma}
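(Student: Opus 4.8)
The plan is to follow the proof of Lemma 4.1 almost word for word; the only structural change is that differentiating a bubble $U_{\eta^j,\la_j}$ in a translation parameter $\eta^j_i$ carries an extra factor of size $\la_j$, since $\|\pa U_{\eta^j,\la_j}/\pa\eta^j_i\|$ is of order $\la_j$, and this is what turns the weights $1$ and $\la_i^{-\gamma_i-1}$ of Lemma 4.1 into $\la_j$ and $\la_j^{2-\gamma_j}$ and inflates each error term by one power of $\la_j$. Fix $j\neq l$ in $\{1,2\}$, let $v=v(\eta,\la)$ be the map from Proposition 3.2, and set $u=\sum_{m=1}^{2}U_{\eta^m,\la_m}+v$. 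Differentiating $J(\eta,\la,v)=I(u)$ in $\eta^j_i$, using that each $U_{\eta^m,\la_m}$ solves \eqref{L}, that $\langle v,\pa U_{\eta^j,\la_j}/\pa\eta^j_i\rangle=0$ since $v\in E_{\eta,\la}$, and expanding the nonlinearity about $\sum_m U_{\eta^m,\la_m}$ (the cubic and higher remainder being $O(\la_j\|v\|^2)$), one is led exactly as in Lemma 4.1 to
\begin{equation*}
\frac{\pa J(\eta,\la,v)}{\pa\eta^j_i}=A_1+A_2+A_3+O\bigl(\la_j\|v\|^2\bigr),
\end{equation*}
\begin{align*}
A_1&=\int_{\N}\frac{1}{|y|}\Bigl(\sum_{m=1}^{2}U_{\eta^m,\la_m}^{\frac{N}{N-2}}-\bigl(\sum_{m=1}^{2}U_{\eta^m,\la_m}\bigr)^{\frac{N}{N-2}}\Bigr)\frac{\pa U_{\eta^j,\la_j}}{\pa\eta^j_i}\,dx,\\
A_2&=-\va\int_{\N}\frac{K(x)}{|y|}\Bigl(\sum_{m=1}^{2}U_{\eta^m,\la_m}\Bigr)^{\frac{N}{N-2}}\frac{\pa U_{\eta^j,\la_j}}{\pa\eta^j_i}\,dx,\\
A_3&=-\frac{N}{N-2}\int_{\N}\frac{1+\va K(x)}{|y|}\Bigl(\sum_{m=1}^{2}U_{\eta^m,\la_m}\Bigr)^{\frac{2}{N-2}}\frac{\pa U_{\eta^j,\la_j}}{\pa\eta^j_i}\,v\,dx.
\end{align*}
Here I use that no diagonal self-interaction survives: since $U_{\eta^j,\la_j}$ solves \eqref{L} and $\int_{\N}|y|^{-1}U_{\eta^j,\la_j}^{\frac{2(N-1)}{N-2}}\,dx$ is independent of $\eta^j$ (the Hardy weight depends on $y$ alone, the bubble being translated in $z$), one has $\int_{\N}|y|^{-1}U_{\eta^j,\la_j}^{\frac{N}{N-2}}\bigl(\pa U_{\eta^j,\la_j}/\pa\eta^j_i\bigr)\,dx=0$, exactly as in the proof of Lemma 4.1.

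The three pieces are then estimated as follows. In $A_1$, expanding the difference, the dominant contributions are $\int_{\N}|y|^{-1}U_{\eta^j,\la_j}^{\frac{2}{N-2}}U_{\eta^l,\la_l}\bigl(\pa U_{\eta^j,\la_j}/\pa\eta^j_i\bigr)\,dx$ and its companion with $j,l$ exchanged, and Lemmas 5.5 and 5.7 give that these equal $-C(\eta^j_i-\eta^l_i)\va_{12}$ up to $O(\la_j\va_{12}^{1+\tau})$. In $A_2$, writing $\bigl(\sum_m U_{\eta^m,\la_m}\bigr)^{\frac{N}{N-2}}=\sum_m U_{\eta^m,\la_m}^{\frac{N}{N-2}}+(\text{cross terms})$, the cross terms contribute $O(\va\la_j\va_{12})$, the $U_{\eta^l,\la_l}^{\frac{N}{N-2}}$–piece is negligible because the bubbles are far apart, and the leading piece $-\va\int_{\N}|y|^{-1}K(x)U_{\eta^j,\la_j}^{\frac{N}{N-2}}\bigl(\pa U_{\eta^j,\la_j}/\pa\eta^j_i\bigr)\,dx$ is what Lemma 5.6 evaluates: after inserting the flatness expansion \eqref{K}, the constant $K(0,\bar{\eta}^j)$, the $|y|^{\gamma_j}$–term, and every part of $K$ even in $z_i-\bar{\eta}^j_i$ integrate to zero against $\pa U_{\eta^j,\la_j}/\pa\eta^j_i$, which is odd in $z_i-\eta^j_i$, so that a Taylor expansion in $\eta^j_i-\bar{\eta}^j_i$ isolates $-\frac{b_4^ja_i^j}{\la_j^{\gamma_j-2}}\va(\eta^j_i-\bar{\eta}^j_i)$ (a dimensional constant being absorbed into $b_4^j$), modulo $O\bigl(\va\la_j^{3-\gamma_j}|\eta^j-\bar{\eta}^j|^2\bigr)$ and the $O\bigl(\va\sum_{l=1}^{2}(\la_l^{-(\gamma_l-1+\sigma)}+\la_l|\eta^l-\bar{\eta}^l|^{\gamma_l+\sigma})\bigr)$ coming from the $O(|x-(0,\bar{\eta})|^{\gamma+\sigma})$ remainder in \eqref{K}. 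In $A_3$, the identity $-\Delta\bigl(\pa U_{\eta^j,\la_j}/\pa\eta^j_i\bigr)=\frac{N}{N-2}|y|^{-1}U_{\eta^j,\la_j}^{\frac{2}{N-2}}\bigl(\pa U_{\eta^j,\la_j}/\pa\eta^j_i\bigr)$ together with $v\perp\pa U_{\eta^j,\la_j}/\pa\eta^j_i$ kills the diagonal part of $\bigl(\sum_m U_{\eta^m,\la_m}\bigr)^{\frac{2}{N-2}}$; the off–diagonal part and the $\va K$–part are controlled by H\"older and the Lemma 5.1–type interaction estimates, and after inserting the a priori bound $\|v\|=O\bigl(\va\sum_m(|\eta^m-\bar{\eta}^m|^{\gamma_m}+\la_m^{-\gamma_m})+\va_{12}^{1/2+\tau}\bigr)$ of Proposition 3.2 both $A_3$ and $O(\la_j\|v\|^2)$ are of strictly smaller order than the terms already displayed. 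Collecting $A_1,A_2,A_3$ gives the claimed identity.

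The step I expect to be the real work is the sharp evaluation in $A_2$, i.e.\ Lemma 5.6: one must organise the flatness expansion \eqref{K} so that the odd symmetry of $\pa U_{\eta^j,\la_j}/\pa\eta^j_i$ annihilates the bulk of $K$, then extract precisely the first–order term in $\eta^j_i-\bar{\eta}^j_i$ with the correct weight $\la_j^{2-\gamma_j}$, controlling all the weighted integrals of the surviving $|z_i|^{\gamma_j-2}z_i$–type densities against $U_{0,1}^{\frac{N}{N-2}}$ and $\pa U_{0,1}/\pa z_i$; these integrals are finite precisely because $\gamma_j\in(1,N-2)$ guarantees integrability both near the bubble centre and at infinity. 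Everything else is the bookkeeping transcription of the proof of Lemma 4.1 with one systematic extra power of $\la_j$, together with the interaction estimates of Lemmas 5.5--5.7 and the bound of Proposition 3.2.
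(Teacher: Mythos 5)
Your proposal is correct and follows essentially the same route as the paper, whose entire proof of this lemma is the one-line remark that the computation of Lemma 4.1 carries over with Lemmas 5.5--5.7 replacing Lemmas 5.2--5.4: your decomposition into $A_1,A_2,A_3$, with Lemma 5.5 evaluating the cross-interaction, Lemma 5.6 extracting the $-b_4^ja_i^j\la_j^{2-\gamma_j}\va(\eta^j_i-\bar{\eta}^j_i)$ term via the odd-symmetry/Taylor argument, and Lemma 5.7 plus the bound of Proposition 3.2 absorbing the $v$-contribution, is exactly that argument. The only cosmetic slip is attributing the interaction estimate to ``Lemmas 5.5 and 5.7''; the companion term concentrated near the other bubble is of higher order $O(\la_j\va_{12}^{1+\tau})$ and is controlled by the Lemma 5.1-type splitting rather than by Lemma 5.7.
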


\begin{lemma}\label{3.3}
For $(\eta,\la)\in D_\mu$ and $v(\eta,\la)$ obtained in
Proposition 3.2, $j=1,2,\,i=1,\cdot\cdot\cdot,h$,
\begin{eqnarray*}
B_j&=&O\Bigl(\la_j^2\Bigl(\sum\limits_{l=1}^2\Bigl(\frac{\va}{\la_l^{\gamma_l+1}}+
\va|\eta^l-\bar{\eta}^l|^{\gamma_l+1}\Bigl)\Bigl)\Bigl)+O(\la_j\va_{12}),\\
C_{ji}&=&O\Bigl(\frac{1}{\la_j^2}\Bigl(
\frac{\va}{\la_j^{\gamma_j-2}}|\eta^j-\bar{\eta}^j|+\va_{12}\Bigl)\Bigl)
+O\Bigl(\frac{\va}{\la_j}\sum\limits_{l=1}^2\Bigl(\frac{1}{\la_l^{\gamma_l+\sigma}}
+|\eta^l-\bar{\eta}^l|^{\gamma_l+\sigma}\Bigl)\Bigl).
\end{eqnarray*}
\end{lemma}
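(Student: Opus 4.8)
The plan is to treat \eqref{1.11} and \eqref{2.10} as a linear system for the Lagrange multipliers $B_j$ and $C_{ji}$, and to invert it after controlling the coefficient matrix and the right-hand sides. First I would record what is already known. The left-hand sides $\pa J/\pa\la_j$ and $\pa J/\pa\eta_i^j$ are exactly the quantities estimated in Lemma 4.1 and Lemma 4.2, so the right-hand sides of \eqref{1.11}--\eqref{2.10} are of order $O\big(\va_{12}/\la_j\big)+O\big(\va\la_j^{-\gamma_j-1}+\va|\eta^j-\bar\eta^j|^{\gamma_j+1}\big)$ and $O\big(\la_j^{-1}\va\la_j^{2-\gamma_j}|\eta^j-\bar\eta^j|\big)+O(\va_{12})+\text{(lower order)}$ respectively, after multiplying through by the appropriate powers of $\la_j$ to normalize. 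The coefficients on the right of \eqref{1.11}--\eqref{2.10} involve the pairings $\langle\pa^2\Uj/\pa\la_j^2,v\rangle$, $\langle\pa^2\Uj/\pa\la_j\pa\eta_l^j,v\rangle$ and $\langle\pa^2\Uj/\pa\eta_l^j\pa\eta_i^j,v\rangle$; by the scaling properties of $\Uj$ one has $\|\pa^2\Uj/\pa\la_j^2\|=O(\la_j^{-2})$, $\|\pa^2\Uj/\pa\la_j\pa\eta_l^j\|=O(1)$ (in fact $O(\la_j^{-1}\cdot\la_j)=O(1)$ up to constants), and $\|\pa^2\Uj/\pa\eta_l^j\pa\eta_i^j\|=O(\la_j^2)$, so using the bound $\|v\|=O\big(\va\sum_j(|\eta^j-\bar\eta^j|^{\gamma_j}+\la_j^{-\gamma_j})\big)+O(\va_{12}^{1/2+\tau})$ from Proposition~3.2 these pairings are small.

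The key step is then the following. After the normalization $\tilde B_j=B_j/\la_j^2$, $\tilde C_{ji}=\la_j^2 C_{ji}$ (so that each entry of the transformed matrix is $O(1)$), the system \eqref{1.11}--\eqref{2.10} reads schematically
$$
\big(\mathrm{Id}+A_\va\big)\begin{pmatrix}\tilde B\\ \tilde C\end{pmatrix}=\begin{pmatrix}\text{RHS}_\la\\ \text{RHS}_\eta\end{pmatrix},
$$
where the perturbation matrix $A_\va$ has entries which are $O(\|v\|)=o(1)$ as $\va\to0$ and $\mu\to0$, by the pairing estimates above. Actually a cleaner route is to observe directly that $\langle\pa\Uj/\pa\la_j,\pa\Uj/\pa\la_j\rangle$, $\langle\pa\Uj/\pa\eta_i^j,\pa\Uj/\pa\eta_l^j\rangle$ etc.\ form, after the same normalization, an invertible block-diagonal matrix with uniformly bounded inverse (these are the classical non-degeneracy inner products, computed in \cite{Ca}), and the cross terms $\langle\pa\Uj/\pa\la_j,\pa U_{\eta^l,\la_l}/\pa\la_l\rangle$ with $j\neq l$ are $O(\va_{12})=o(1)$. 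Hence the full coefficient matrix of the linear system for $(\tilde B,\tilde C)$ is invertible with inverse bounded independently of $\eta,\la,\va,\mu$ in the relevant ranges, so
$$
|\tilde B_j|+\sum_i|\tilde C_{ji}|\le C\big(|\text{RHS}_\la|+|\text{RHS}_\eta|\big).
$$
Substituting the estimates from Lemmas 4.1--4.2 for the right-hand sides and undoing the normalization yields precisely the stated bounds for $B_j$ and $C_{ji}$, where one checks along the way that the main terms $-C_{N,k}b_1\va_{12}/\la_i$ from Lemma~4.1 and $-b_4^j a_i^j\la_j^{2-\gamma_j}\va(\eta_i^j-\bar\eta_i^j)$ from Lemma~4.2 produce respectively the $O(\la_j\va_{12})$ and $O(\va\la_j^{2-\gamma_j}|\eta^j-\bar\eta^j|\cdot\la_j^{-2})$ contributions, and the perturbative terms in those lemmas are absorbed into the displayed remainders.

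The main obstacle I expect is bookkeeping rather than anything conceptual: one must carefully track the powers of $\la_j$ attached to each derivative of $\Uj$ (the $\la$-derivative gains $\la_j^{-1}$ and each $\eta$-derivative gains $\la_j$), make sure the normalization $\tilde B_j=B_j/\la_j^2$, $\tilde C_{ji}=\la_j^2C_{ji}$ is the one that makes \emph{all} entries of the coefficient matrix $O(1)$ simultaneously, and verify that the off-diagonal (bubble-to-bubble) contributions are genuinely $o(1)$ uniformly — this uses $\va_{12}\to0$, which holds on $D_\mu$ since $\la_j\to\infty$. A secondary point to be careful about is that \eqref{1.11}--\eqref{2.10} already have the pairings $\langle\pa^2\Uj/\pa\la_j\pa\eta_l^j,v\rangle$ on the right multiplying the \emph{same} unknowns $C_{jl}$, so these must be moved to the left before inverting; since they are $O(\|v\|)=o(1)$ this only perturbs the coefficient matrix by a negligible amount and does not affect invertibility.
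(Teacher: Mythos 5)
Your ``cleaner route'' is exactly the paper's argument: test \eqref{2.11} against the basis $\{\pa\Uj/\pa\la_j,\ \pa\Uj/\pa\eta^j_i\}$, use the identities $\langle\pa J/\pa v,\pa\Uj/\pa\la_j\rangle=\pa J/\pa\la_j$ and $\langle\pa J/\pa v,\pa\Uj/\pa\eta^j_i\rangle=\pa J/\pa\eta^j_i$, and invert the resulting quasi-diagonal Gram system (Lemma 5.8) with the data supplied by Lemmas 4.1--4.2, so the proposal is correct and essentially identical to the paper's proof. Be aware, though, that your opening plan taken literally --- inverting \eqref{1.11}--\eqref{2.10} as a linear system for $(B_j,C_{ji})$ --- would fail, both because those identities hold only at a critical point of $J$ on $M_\mu$ (Proposition 3.2 guarantees only \eqref{2.11} for general $(\eta,\la)\in D_\mu$) and because their coefficient matrix consists entirely of the $O(\|v\|)$ pairings $\langle\pa^2\Uj/\pa\la_j^2,v\rangle$, $\langle\pa^2\Uj/\pa\la_j\pa\eta^j_l,v\rangle$, etc., with no order-one diagonal part; the invertible leading block comes only from the Gram matrix generated by \eqref{2.11}.
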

\begin{proof}
For each $\varphi\in \D$, there holds
$$
\Bigl\langle\frac{\pa J(\eta,\la,v)}{\pa
v},\,\varphi\Bigl\rangle=\sum\limits_{j=1}^2B_j
\Bigl\langle\frac{\pa \Uj}{\pa
  \la_j},\,\varphi\Bigl\rangle+\sum_{j=1}^2\sum\limits_{i=1}^{h} C_{ji}\Bigl\langle\frac{\pa \Uj}
  {\pa \eta_i^j},\,\varphi\Bigl\rangle.
$$
We take
$\varphi=\frac{\pa\Uj}{\pa\la_j},\,\,\frac{\pa\Uj}{\pa\eta^j_i}$,
$j=1,2,\,\,i=1,\cdot\cdot\cdot,h$ into the above equation, and use
the fact that
$$
\Bigl\langle \frac{\pa J}{\pa
v},\,\frac{\pa\Uj}{\pa\la_j}\Bigl\rangle=\frac{\pa
J}{\pa\la_j},\,\,\,\,\,\,\,\,\Bigl\langle \frac{\pa J}{\pa
v},\,\frac{\pa\Uj}{\pa\eta^j_i}\Bigl\rangle=\frac{\pa
J}{\pa\eta^j_i},
$$
then we  get a quasi-diagonal linear system with $B_j, \,C_{ji}$
as unknowns. Using the estimates in Lemmas 4.1, 4.2 and 5.8, the
required estimates can be obtained.
\end{proof}

\noindent{\bf Proof of Theorem 2.3.} Let
$$
L_\va=\va^{-\frac{\gamma_1\gamma_2}{\frac{N-2}{2}(\gamma_1
+\gamma_2)-\gamma_1\gamma_2}}.
$$
Now to complete the proof, we only need to show that \eqref{1.11},
\eqref{2.10} are satisfied by some $(\eta,\la)\in D_\mu$. We will
show that for some suitable $\delta>0$, $m_1>0$ small and $m_2$
large, there exists $(\eta,\la)\in D_\mu$ such that
$$
(\la_1,\la_2)\in
[m_1L_\va^{\gamma_1^{-1}},\,m_2L_\va^{\gamma_1^{-1}}]\times
[m_1L_\va^{\gamma_2^{-1}},\,m_2L_\va^{\gamma_2^{-1}}],\,\,\,\,\eta=(\eta^1,\eta^2)\in
B_{\frac{\delta}{\la_1}}(\bar{\eta}^1)\times
B_{\frac{\delta}{\la_2}}(\bar{\eta}^2)
$$
$(\eta,\la,v(\eta,\la))$ satisfies \eqref{1.11} and \eqref{2.10}.

Employing Lemmas 4.1-4.3, Lemma 5.8, we get the following
equivalent form of  \eqref{1.11}, \eqref{2.10}
\begin{eqnarray*}
\frac{\va}{\la_j^{\gamma_j}}(\eta^j_i-\bz^j_i)&=&O\Bigl(\va\sum\limits_{l=1}^2\Bigl(\frac{1}{\la_l^{\gamma_l+\sigma}}
+|\eta^l-\bar{\eta}^l|^{\gamma_l+\sigma}\Bigl)\Bigl)\\
&&+O(\frac{\va_{12}}{\la_j}),\,\,\,j=1,2,i=1,\cdot\cdot\cdot,h,\\
\frac{b_1}{(\la_1\la_2)^{\frac{N-2}{2}}}+\frac{N-2}{2}
\Bigl[\frac{b^j_2}{k}\sum_{l=1}^{k}\xi^j_l
+\frac{b^j_3}{h}\sum_{l=1}^{h}a^j_l\Bigl]\frac{\va}{\la_i^{\gamma_i}}&=&
O\Bigl(\va\sum\limits_{l=1}^2\Bigl(\frac{1}{\la_l^{\gamma_l+\sigma}}+|\eta^l-\bar{\eta}^l|^{\gamma_l+\sigma}\Bigl)\Bigl)\\
&&+O(\va\va_{12}+\va_{12}^{1+\tau}),\,\,j=1,2.
\end{eqnarray*}

Let
\begin{eqnarray*}
&\la_1=t_1L_\va^{\gamma_1^{-1}},\,\,\,\la_2=t_2L_\va^{\gamma_2^{-1}},\,\,\,t_j\in
[m_1,m_2],\\
&\eta^1-\bar{\eta}^1=\la_1^{-1}x^1,\,\,\,\eta^2-\bar{\eta}^2=\la_2^{-1}x^2,\,\,(x^1,x^2)\in
B_\delta(0)\times B_\delta(0)\subset\R^h\times\R^h.
\end{eqnarray*}
Then since $b_1<0$ and $\frac{b^j_2}{k}\sum_{l=1}^{k}\xi^j_l
+\frac{b^j_3}{h}\sum_{l=1}^{h}a^j_l>0$ by
 \eqref{g}, the above system can be rewritten in the following equivalent
one
\[
\left\{ \begin{array}{l}
 x^j=o_\va(1),\,\,\,j=1,2,\\
t_j^{-\gamma_j}-\frac{c_j}{(t_1t_2)^{\frac{N-2}{2}}}=o_\va(1),\,\,\,j=1,2,
\end{array}
\right.
\]
where $c_j=-b_1\Bigl(\frac{N-2}{2}
\Bigl[\frac{b^j_2}{k}\sum_{l=1}^{k}\xi^j_l
+\frac{b^j_3}{h}\sum_{l=1}^{h}a^j_l\Bigl]\Bigl)^{-1}>0$,
$o_\va(1)\to 0$ as $\va\to 0$.

Let
\begin{eqnarray*}
f(x^1,x^2)&=&(x^1,x^2),\,\,\,\,\,\,\,(x^1,x^2)\in
\Om_1:=B_\delta(0)\times
B_\delta(0),\\
g(t_1,t_2)&=&\Bigl(t_1^{-\gamma_1}-\frac{c_1}{(t_1t_2)^{\frac{N-2}{2}}},\,
t_2^{-\gamma_2}-\frac{c_2}{(t_1t_2)^{\frac{N-2}{2}}}\Bigl),\\
&&(t_1,\,t_2)\in \Om_2:=[m_1,\,m_2]\times [m_1,\,m_2].
\end{eqnarray*}

With the same arguments as in \cite{CNY3}, we can prove
$$
deg\bigl((f,g),\,\Om_1\times\Om_2,\,0\bigl)=-1.
$$
As a consequence, we complete the proof.\hspace{7cm} $\square$\\

\noindent{\bf Proof of Theorem 2.4.}\,\,\,The main idea  is from
\cite{Ya}. Since all the computations are similar to those in the
proof of Theorem 2.2, we only give the sketch.

For simplicity, we assume that
$(0,\bar{\eta}^{j_1})=(0,\bar{\eta}^1)$ and $(0,\bar{\eta}^2)$ is
another local maximum point of $\phi$ with $s\triangleq
|\bar{\eta}^1-\bar{\eta}^2|$ large enough. Define
$$
L_1=s^{\frac{(N-2)\gamma_2}{\gamma_1\gamma_2-(\gamma_1+\gamma_2)(N-2)/2}},\,\,\,
L_2=s^{\frac{(N-2)\gamma_1}{\gamma_1\gamma_2-(\gamma_1+\gamma_2)(N-2)/2}}.
$$

Proceeding as done in the proof of Proposition 3.2, we do the
finite dimensional reduction to obtain $v(\eta,\la)$ and the same
estimate on $v(\eta,\la)$.

We now study the problem
\begin{equation}\label{I}
\inf\{J(\eta,\la,v(\eta,\la)):\,\,\,(\eta,\la)\in D_{\mu,2}\},
\end{equation}
where
$$
D_{\mu,2}\triangleq\{(\eta,\la):\,\,(\eta,\la)\in
D_{\mu},\,\,\la_j\in [\be_1L_j,\,\be_2L_j],\,\,j=1,2\},
$$
$\be_1>0$ is a small constant and $\be_2>0$ is a large constant
and both of them will be determined later. Problem \eqref{I} has a
minimizer $(\tilde{\eta},\tilde{\la})\in D_{\mu,2}$. In the
sequel, We will prove that for $s$ large enough,
$(\tilde{\eta},\tilde{\la})$ is an interior point of $D_{\mu,2}$
and thus is a critical point of $J(\eta,\la,v(\eta,\la))$.

By Lemma 5.1, calculating  as in the proof of Proposition 3.2, we
obtain
\begin{eqnarray*}
J(\eta,\la,v(\eta,\la))&=&J(\eta,\la,0)+O(\|v\|^2))\\
&=&\sum\limits_{j=i}^2I\Bigl(\frac{U_{\eta^j,\la_j}}{\phi(0,\eta^j)^{(N-2)/2}}\Bigl)
-\frac{D\va_{12}}{\phi(0,\eta^1)^{(N-2)/2}\phi(0,\eta^2)^{(N-2)/2}}\\
&&+O\Bigl(\sum\limits_{j=1}^2\Bigl(|\eta^j-\bar{\eta}^j|^{2\gamma_j}
+\frac{1}{\la_j^{2\gamma_j}}\Bigl)\Bigl)+O(\va_{12}^{1 +2\tau})\\
&=&\sum\limits_{j=i}^2\Bigl[\Bigl(\frac{1}{2}\frac{1}{\phi(0,\eta^j)^{N-2}}-\frac{N-2}{2(N-1)}
\frac{\phi(0,\bar{\eta}^j)}{\phi(0,\eta^j)^{N-1}}\Bigl)A\\
&&+\frac{N-2}{2(N-1)}
\frac{1}{\phi(0,\eta^j)^{N-1}}\int_{\R^N}\frac{1}{|y|}Q_j\Bigl(\frac{x}{\la_j}
+(0,\eta^j)-(0,\bar{\eta}^j)\Bigl)U_{0,1}^{\frac{2(N-1)}{N-2}}\Bigl]\\
&&-\frac{D\va_{12}}{\phi(0,\eta^1)^{(N-2)/2}\phi(0,\eta^2)^{(N-2)/2}}\\
&&+O\Bigl(\sum\limits_{j=1}^2\Bigl(|\eta^j-\bar{\eta}^j|^{\gamma_j+\sigma}
+\frac{1}{\la_j^{\gamma_j+\sigma}}\Bigl)\Bigl),
\end{eqnarray*}
where $A=\int_{\N}\frac{1}{|y|}U_{0,1}^{\frac{2(N-1)}{N-2}}$ and
$D>0$ is a constant independent of $s$.

We first prove that
$|\tilde{\eta}^j-\bar{\eta}^j|<\frac{C}{\la_j}$.

Notice that
\begin{eqnarray*}
\frac{1}{2}\frac{1}{\phi(0,\eta^j)^{N-2}}-\frac{N-2}{2(N-1)}
\frac{\phi(0,\bar{\eta}^j)}{\phi(0,\eta^j)^{N-1}}&=&\frac{1}{2(N-1)}\frac{1}{\phi(0,\bar{\eta}^j)^{N-2}}\\
&&+O(|\eta^j-\bar{\eta}^j|^{2\gamma_j}),\\
Q_j\Bigl(\frac{x}{\la_j} +(0,\eta)-(0,\bar{\eta}^j)\Bigl)&\geq &
a_0|\eta^j-\bar{\eta}^j|^{\gamma_j}-C\frac{|x|^{\gamma_j}}{\la_{j}}.
\end{eqnarray*}
Hence from
$$
J(\tilde{\eta},\tilde{\la},v(\tilde{\eta},\tilde{\la}))\leq
J(\bar{\eta},\tilde{\la},v(\bar{\eta},\tilde{\la})),
$$
we conclude
$$
\sum\limits_{j=i}^2a_0|\tilde{\eta}^j-\bar{\eta}^j|^{\gamma_j}\leq
O\Bigl(\sum\limits_{j=1}^2\Bigl(|\tilde{\eta}^j-\bar{\eta}^j|^{2\gamma_j}
+\frac{1}{\la_j^{\gamma_j}}\Bigl)\Bigl)+O(\va_{12}^{1}),
$$
which implies $|\tilde{\eta}^j-\bar{\eta}^j|<\frac{C}{\la_j}$.

At last, we verify that $\tilde{\la}_j\in
(\be_1L_j,\be_2L_j),\,\,j=1,2.$

Denote $\tilde{\la}_j=t_jL_j,\,j=1,2$. By the fact $\gamma_j>N-2$,
we deduce that there exists $(t_{0,1},t_{0,2})\in\R^+\times\R^+$,
such that
\begin{equation}\label{r}
\sum\limits_{j=1}^2\frac{c}{t_{0,j}^{\gamma_j}}-\frac{D}{[t_{0,1}t_{0,2}
\phi(0,\bar{\eta}^1)\phi(0,\bar{\eta}^2)]^{(N-2)/2}}<-c'<0.
\end{equation}
Let $\la_0=(t_{0,1}L_1,\,t_{0,2}L_2)$. Then
\begin{equation}\label{s}
J(\bar{\eta},\la_0, v(\bar{\eta},\la_0))\leq
\sum\limits_{j=1}^2\frac{1}{2(N-1)}\frac{1}{\phi(0,\bar{\eta}^j)^{N-2}}A-c_0's^{-\frac{2\gamma_1\gamma_2(N-2)}
{2\gamma_1\gamma_2-(\gamma_1+\gamma_2)(N-2)}},
\end{equation}
for some constant $c_0'>0$. On the other hand, direct computation
gives
\begin{eqnarray*}
J(\tilde{\eta},\tilde{\la}, v(\tilde{\eta},\tilde{\la}))&\geq&
\sum\limits_{j=1}^2\frac{1}{2(N-1)}\frac{A}{\phi(0,\bar{\eta}^j)^{N-2}}\\
&&+c''\sum\limits_{j=1}^2\frac{1}{\tilde{\la}_j^{\gamma_j}}
-\frac{D\va_{12}}{\phi(0,\tilde{\eta}^1)^{(N-2)/2}\phi(0,\tilde{\eta}^2)^{(N-2)/2}}\\
&&+O\Bigl(\sum\limits_{j=1}^2\frac{1}{\tilde{\la}_j^{\gamma_j+\sigma}}+\va_{12}^{1
+2\tau}\Bigl)
\end{eqnarray*}
Hence, employing $
J(\tilde{\eta},\tilde{\la},v(\tilde{\eta},\tilde{\la}))\leq
J(\bar{\eta},\la_0,v(\bar{\eta},\la_0))$, we see
\begin{equation}\label{t}
c''\sum\limits_{j=1}^2\frac{1}{\tilde{\la}_j^{\gamma_j}}
-\frac{D\va_{12}}{\phi(0,\tilde{\eta}^1)^{(N-2)/2}\phi(0,\tilde{\eta}^2)^{(N-2)/2}}\leq
-c_0's^{-\frac{2\gamma_1\gamma_2(N-2)}
{2\gamma_1\gamma_2-(\gamma_1+\gamma_2)(N-2)}}.
\end{equation}

If $\tilde{\la}_1=\be_1L_1$, then
\begin{eqnarray*}
\va_{12}&=&\frac{1+o(1)}{(\tilde{\la}_1\tilde{\la}_2|\tilde{\eta}^1-\tilde{\eta}^2|^2)^{(N-2)/2}}
\leq\frac{2}{\be_1^{N-2}}s^{-\frac{2\gamma_1\gamma_2(N-2)}
{2\gamma_1\gamma_2-(\gamma_1+\gamma_2)(N-2)}}.
\end{eqnarray*}
Hence \eqref{t} implies
$$
\frac{c''}{\be_1^{\gamma_1}}-\frac{2}{\be_1^{N-2}}\leq -c_0'.
$$
This is impossible for $\be_1$ small since $\gamma_1>N-2$.

If $\tilde{\la}_1=\be_2L_1$, then
\begin{eqnarray*}
\va_{12}&=&\frac{1+o(1)}{(\tilde{\la}_1\tilde{\la}_2|\tilde{\eta}^1-\tilde{\eta}^2|^2)^{(N-2)/2}}
\leq\frac{\bar{c}}{\be_1^{(N-2)/2}\be_2^{(N-2)/2}}s^{-\frac{2\gamma_1\gamma_2(N-2)}
{2\gamma_1\gamma_2-(\gamma_1+\gamma_2)(N-2)}}.
\end{eqnarray*}
Hence \eqref{t} implies
$$
-\frac{\bar{c}}{\be_1^{(N-2)/2}\be_2^{(N-2)/2}}\leq -c_0'.
$$
This is impossible for $\be_2$ (depending on $\be_1$) sufficiently
large.  Since the same argument can be applied to $\tilde{\la}_2$,
we see $\tilde{\la}_j\in (\be_1L_j,\be_2L_j),\,\,j=1,2.$
\hspace{5cm}$\square$\\

Now we give the proof of Theorem 1.1 and Theorem 1.2.

\noindent{\bf Proof of Theorem 1.1 and Theorem
1.2:}\,\,\,\,\,Define
$$
D_{cy}^{1,2}(\N)\triangleq \{u\in\D:\,\,\,u(y,z)=u(|y|,z)\},
$$
then it is well known that the positive critical points of
$I|_{D_{cy}^{1,2}(\N)}$ are indeed critical points of $I$ in $\D$.
Hence, since Proposition 2.1 and Corollary 2.2, to complete the
proof, it suffices to prove that if $\phi(y,z)$ is cylindrically
symmetric in $y$, then so is $v_\va$  in Proposition 3.2.

   Indeed, if $\phi(y,z)=\phi(|y|,z)$, since $\frac{\pa\Uj}{\pa
\la_j},\,\,\frac{\pa\Uj}{\pa\eta^i_j}$ are cylindrically symmetric
in $y$, we can do in the proof of Proposition 3.2 the finite
dimensional reduction of $J$  to get the same results in
$D_{cy}^{1,2}(\N)$. \hspace{8.5cm} $\square$

\section{Appendix}

In this section we give some basic estimates  used in the previous
sections.

\begin{lemma}\hspace{0.3cm} Let $\al\geq\be>1$  such that
$\al+\be=\s$. Then
\begin{eqnarray*}
&&\int_{\R^{N}} \frac{1}{|y|}\Ui^{\frac{N}{N-2}}\Uj dx
=C_{N,k}\va_{ij}\int_{\R^{N}}\frac{dx}{|y|[(1+|y|)^2+|z|^2]^{\frac{N}{2}}}+(\va_{ij}^{1+\tau}),\\
&&\int_{\R^{N}}
\frac{1}{|y|}\Ui^{\al}\Uj^{\be}dx=O(\va_{ij}^{1+\tau}),
\end{eqnarray*}
where $\tau$ is some small positive constant.
\end{lemma}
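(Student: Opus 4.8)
The plan is to estimate the two interaction integrals by the standard method of splitting $\mathbb{R}^N$ into regions where one bubble dominates the other. Write $\Ui = U_{\eta^i,\la_i}$ and $\Uj = U_{\eta^j,\la_j}$, recall the explicit form
$$
U_{\eta,\la}(x)=\frac{[(N-2)(k-1)]^{\frac{N-2}{2}}\la^{\frac{N-2}{2}}}{\bigl((1+\la|y|)^2+\la^2|z-\eta|^2\bigr)^{\frac{N-2}{2}}},
$$
and note that $\Ui^{N/(N-2)} = C_{N,k}\,\la_i^{\frac{N+2}{2}}\,/\,\bigl((1+\la_i|y|)^2+\la_i^2|z-\eta^i|^2\bigr)^{\frac{N}{2}}$ up to the fixed constant $[(N-2)(k-1)]^{N/2}$, which is absorbed into $C_{N,k}=[(N-2)(k-1)]^{N-1}$ together with one more power. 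The leading term arises from the region near the $i$-th bubble, where $\Ui$ concentrates; there, after the change of variables $x = (\la_i^{-1}\tilde y,\ \eta^i+\la_i^{-1}\tilde z)$, the factor $\frac{1}{|y|}\Ui^{N/(N-2)}$ rescales to $\frac{\la_i}{|\tilde y|}\,(1+|\tilde y|)^2+|\tilde z|^2)^{-N/2}$ (the $\la_i$ coming from $|y|^{-1}$), which contributes the finite integral $\int_{\R^N}\frac{dx}{|y|[(1+|y|)^2+|z|^2]^{N/2}}$, while $\Uj$ is slowly varying there and equals $U_{\eta^j,\la_j}(\eta^i)(1+o(1))$, and one checks directly that $\la_i\,U_{\eta^j,\la_j}(\eta^i)=C_{N,k}\,\va_{ij}(1+o(1))$ since $\va_{ij}=(\la_i\la_j|\eta^i-\eta^j|^2)^{(2-N)/2}$ and $U_{\eta^j,\la_j}(\eta^i)\sim [(N-2)(k-1)]^{\frac{N-2}{2}}\la_j^{\frac{N-2}{2}}(\la_j|\eta^i-\eta^j|)^{-(N-2)}$ when $\la_j|\eta^i-\eta^j|\to\infty$.

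First I would fix the separation: since $(\eta,\la)\in D_\mu$ with $\la_i,\la_j\to\infty$ and $|\eta^i-\eta^j|$ bounded below, we have $\la_i\la_j|\eta^i-\eta^j|^2\to\infty$, hence $\va_{ij}\to 0$. Then I split $\R^N = \Omega_i\cup\Omega_j$ where $\Omega_i=\{\,\la_i|x-(0,\eta^i)|\le \la_j|x-(0,\eta^j)|\,\}$ (and symmetrically $\Omega_j$), so that on $\Omega_i$ the $i$-th bubble is the larger one, and on $\Omega_j$ the $j$-th one is. On $\Omega_i$ I estimate $\Uj$ by its value near $\eta^i$ up to the error produced by its variation over a ball of radius $\sim \la_i^{-1}$, which is of lower order; the main term is exactly $C_{N,k}\va_{ij}\int_{\R^N}\frac{dx}{|y|[(1+|y|)^2+|z|^2]^{N/2}}$, and the remainder over $\Omega_i$ away from the concentration scale, plus the whole contribution of $\Omega_j$ (where $\Ui$ is small and $\Uj^{N/(N-2)}$ is integrated against the small factor $\Ui$), is $O(\va_{ij}^{1+\tau})$ for some small $\tau>0$. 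For the second identity, $\int \frac{1}{|y|}\Ui^\al\Uj^\be\,dx$ with $\al\ge\be>1$, $\al+\be=\frac{2(N-1)}{N-2}$, there is no single dominant bubble producing a non-integrable concentration: on $\Omega_i$ one has $\Ui^\al\le \Ui^{N/(N-2)}\cdot\Ui^{\al-N/(N-2)}$ and since $\al>1 > 0$ while $\be>1$, a direct power count using $\Uj\lesssim \va_{ij}^{1/2}\la_i^{(2-N)/2}(\ldots)$-type bounds on $\Omega_i$ gives a gain of $\va_{ij}^{1+\tau}$; symmetrically on $\Omega_j$. Throughout I would use that the exponents satisfy $\al,\be>1$ strictly, which is what forces $\tau>0$ rather than $\tau=0$.

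The main obstacle is the careful bookkeeping of the exponents near the concentration point to make the Hardy-type weight $1/|y|$ interact correctly with the rescaling — in particular verifying that $\int_{\R^N}\frac{dx}{|y|[(1+|y|)^2+|z|^2]^{N/2}}$ is finite (near $y=0$ the singularity $|y|^{-1}$ is integrable in $\R^k$ since $k\ge 2$, and at infinity the decay $\rho^{-2N}$ beats the volume growth) and that the rescaled remainder integrals genuinely produce a power $\va_{ij}^{1+\tau}$ with $\tau$ uniform in $(\eta,\la)\in D_\mu$. This is the kind of estimate established in \cite{Ca,Rey1,CNY3} in the present Grushin/Hardy--Sobolev setting, and I would adapt those region-splitting arguments, tracking the extra $\la_i$ from the weight $|y|^{-1}$, to close the proof.
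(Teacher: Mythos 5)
Your proposal follows essentially the same route as the paper: split $\R^N$ according to which bubble dominates, rescale around the concentration point of $\Ui$ to extract the leading term $C_{N,k}\va_{ij}\int_{\R^N}\frac{dx}{|y|[(1+|y|)^2+|z|^2]^{N/2}}$, and power-count the remaining regions to obtain $O(\va_{ij}^{1+\tau})$ — the paper uses a three-region split at scale $\sqrt{\la_1\la_2}$ rather than your two-region one, but the mechanism is identical and your treatment of the weight $1/|y|$ (integrability for $k\ge 2$) and of the second integral via $\al\ge\be>1$ is correct. One bookkeeping slip: the intermediate identity $\la_i\,U_{\eta^j,\la_j}(\eta^i)=C_{N,k}\va_{ij}(1+o(1))$ is not right as written, since $U_{\eta^j,\la_j}(\eta^i)=[(N-2)(k-1)]^{\frac{N-2}{2}}\la_i^{\frac{N-2}{2}}\va_{ij}(1+o(1))$ and the prefactor coming from the rescaled weight, the factor $\la_i^{N/2}$ in $\Ui^{N/(N-2)}$ and the Jacobian $\la_i^{-N}$ is $[(N-2)(k-1)]^{N/2}\la_i^{-\frac{N-2}{2}}$ rather than $\la_i$; the product does give $C_{N,k}\va_{ij}$, so your final coefficient is correct.
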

\begin{proof}
We only prove the first estimate for the case $\la_1\geq\la_2$, all
the rest can be proved similarly. Split
\begin{eqnarray*}
\R^{N}&=&\{\bx: |x|\leq \sqrt{\la_1\la_2}/10\}\\
&&\bigcup\{|x|>
\sqrt{\la_1\la_2}/10:|x-\la_1(0,\eta^2-\eta^1)|\geq\la_1|\eta^2-\eta^1|/10\}\\
&&\bigcup\{|x|>
\sqrt{\la_1\la_2}/10:|x-\la_1(0,\eta^2-\eta^1)|<\la_1|\eta^2-\eta^1|/10\}\\
&=&:\Om_1\cup\Om_2\cup\Om_3.
\end{eqnarray*}
\begin{eqnarray*}
&&\int_{\R^{N}}\frac{1}{|y|} \U^{\frac{N}{N-2}}\V dx\\
&=&\int_{\R^{N}}
\frac{C_{N,k}}{|y|[(1+|y|)^2+|z|^2]^{\frac{N}{2}}}\frac{dx}{\Bigl(\frac{\la_1}{\la_2}(1+\frac{\la_2}{\la_1}|y|)^2
+|\sqrt{\frac{\la_2}{\la_1}}z-
\sqrt{\la_1\la_2}(\eta^2-\eta^1)|^2\Bigl)^{\frac{N-2}{2}}}\\
&=&C_{N,k}\va_{12}\int_{\Om_1}
\frac{dx}{|y|[(1+|y|)^2+|z|^2]^{\frac{N}{2}}}\Bigl(1
-\frac{(N-2)|y|}{\la_1/\la_2+\la_1\la_2|\eta^2-\eta^1|^2}\\
&&+
\frac{\la_2/\la_1O(|x|^2)}{\la_1/\la_2+\la_1\la_2|\eta^2-\eta^1|^2}\Bigl)+O(\va_{12})\int_{\Om_2}
\frac{C_{N,k} dx}{|y|[(1+|y|)^2+|z|^2]^{\frac{N}{2}}}\\
&&+O\Bigl(\frac{1}{\la_1^N|\eta^2-\eta^1|^N}\Bigl)\int_{\Om_3}\frac{dx}{|y|\Bigl(\Bigl(\frac{\la_1}{\la_2}(1+\frac{\la_2}{\la_1}|y|)^2
+|\sqrt{\frac{\la_2}{\la_1}}z-
\sqrt{\la_1\la_2}(\eta^2-\eta^1)|^2\Bigl)^{\frac{N-2}{2}}}\\
&=&C_{N,k}\va_{12}\int_{\R^{N}}
\frac{dx}{|y|[(1+|y|)^2+|z|^2]^{\frac{N}{2}}}
+O\Bigl(\va_{12}^{1+\tau}\Bigl)\\
&&+O\Bigl(\frac{1}{\la_1^N|\eta^2-\eta^1|^N}\Bigl)\int_0^{\la_2|\eta^2-\eta^1|/10}
\Bigl(\frac{\la_1}{\la_2}\Bigl)^{\frac{N}{2}}\frac{dx}{|y|\Bigl((1+|y|)^2+|z|^2\Bigl)^{\frac{N-2}{2}}}\\
&=&C_{N,k}\va_{ij}\int_{\R^{N}}\frac{dx}{|y|[(1+|y|)^2+|z|^2]^{\frac{N}{2}}}+(\va_{ij}^{1+\tau}).
\end{eqnarray*}
\end{proof}

\begin{lemma}\label{4.2} For $(\eta,\la)\in D_\mu$, $\mu$ small, we
have for $j\neq i,\,j,i=1,2,$
\begin{eqnarray*}
\frac{N}{N-2}\int_{\N}\frac{1}{|y|}\Uj^{\frac{2}{N-2}}\frac{\pa\Uj}{\pa\la_j}\Ui
dx
=b_1C_N\frac{\va_{12}}{\la_j}+O\Bigl(\frac{\va_{12}^{1+\tau}}{\la_j}\Bigl),
\end{eqnarray*}
where
\begin{eqnarray*}
b_1=\frac{N}{2}\int_{\R^{N}}\frac{(1-|x|^2)dx}{|y|\bigl[(1+|y|)^2+|z|^2\bigl]^{\frac{N+2}{2}}}.
\end{eqnarray*}
\end{lemma}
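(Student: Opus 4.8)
The plan is to follow the proof of Lemma~5.1 almost verbatim; the only genuinely new ingredient is the explicit form of the $\la_j$-derivative of the bubble. Differentiating $U_{\eta,\la}(y,z)=[(N-2)(k-1)]^{\frac{N-2}{2}}\la^{\frac{N-2}{2}}\bigl[(1+\la|y|)^2+\la^2|z-\eta|^2\bigr]^{-\frac{N-2}{2}}$ in $\la$ and simplifying gives the pointwise identity
\[
\frac{N}{N-2}\,\Uj^{\frac{2}{N-2}}\frac{\pa\Uj}{\pa\la_j}
=\frac{N}{2}\,[(N-2)(k-1)]^{\frac{N}{2}}\,\la_j^{\frac{N-2}{2}}\,
\frac{1-\la_j^2\bigl(|y|^2+|z-\eta^j|^2\bigr)}
{\bigl[(1+\la_j|y|)^2+\la_j^2|z-\eta^j|^2\bigr]^{\frac{N+2}{2}}},
\]
equivalently, the left side is $\pa_{\la_j}\bigl(\Uj^{\frac{N}{N-2}}\bigr)$. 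Inserting this into the integral and substituting $\tilde x=(\la_j y,\ \la_j(z-\eta^j))$ normalizes the $\Uj$-factor; collecting the powers $\la_j^{-N}$ (Jacobian), $\la_j$ (from $|y|^{-1}$) and $\la_j^{(N-2)/2}$ (from the bubble) leaves an overall $\la_j^{-N/2}$ and reduces the quantity in question to
\[
\frac{N}{2}\,[(N-2)(k-1)]^{\frac{N}{2}}\,\la_j^{-N/2}\int_{\N}
\frac{1-|\tilde x|^2}{|\tilde y|\,\bigl[(1+|\tilde y|)^2+|\tilde z|^2\bigr]^{\frac{N+2}{2}}}\,
\Ui\!\Bigl(\tfrac{\tilde y}{\la_j},\ \tfrac{\tilde z}{\la_j}+\eta^j\Bigr)d\tilde x .
\]

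Next, exactly as for Lemma~5.1, I would split $\N$ into the region $\{|\tilde x|\le\sqrt{\la_i\la_j}/10\}$, a ``far'' region, and a neighbourhood of the other bubble, and on the main region replace $\Ui(\tilde y/\la_j,\tilde z/\la_j+\eta^j)$ by its leading value $[(N-2)(k-1)]^{\frac{N-2}{2}}\bigl(\la_i^{(N-2)/2}|\eta^i-\eta^j|^{N-2}\bigr)^{-1}$, the relative correction being $O(\va_{12}^{\,\tau_0})$ for some $\tau_0>0$ depending only on $N$, by the same expansion used for $\Om_1$ in the proof of Lemma~5.1; the far region and the neighbourhood of the other bubble contribute $O(\va_{12}^{1+\tau}/\la_j)$ by the same estimates used there for $\Om_2,\Om_3$. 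Assembling the factors, $\la_j^{-N/2}\cdot\la_i^{-(N-2)/2}|\eta^i-\eta^j|^{-(N-2)}=\la_j^{-1}\bigl(\la_i\la_j|\eta^i-\eta^j|^2\bigr)^{-(N-2)/2}=\va_{12}/\la_j$, and $[(N-2)(k-1)]^{N/2}\cdot[(N-2)(k-1)]^{(N-2)/2}=[(N-2)(k-1)]^{N-1}=C_{N,k}$, so the main term is $b_1C_{N,k}\va_{12}/\la_j$ with $b_1=\frac{N}{2}\int_{\N}\frac{(1-|x|^2)\,dx}{|y|[(1+|y|)^2+|z|^2]^{(N+2)/2}}$, as claimed (here $C_N$ in the statement is $C_{N,k}$ and $\va_{ij}=\va_{12}$ by symmetry).

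Two points deserve care. The integral defining $b_1$ converges: near $\tilde y=0$ this uses $k\ge 2$, so that $|\tilde y|^{-1}$ is locally integrable in $\R^k$, while at infinity the integrand decays like $|\tilde x|^{-N}|\tilde y|^{-1}$, which is integrable. The main obstacle is to verify that every correction really carries a positive power of $\va_{12}$ beyond the leading order \emph{uniformly} over $(\eta,\la)\in D_\mu$: since $\la_i,\la_j$ both range over $(1/\mu,\infty)$, one must track the ratio $\la_i/\la_j$ entering $(1+(\la_i/\la_j)|\tilde y|)^2+|(\la_i/\la_j)\tilde z+\la_i(\eta^j-\eta^i)|^2$, exactly as the factors $\la_1/\la_2$ were handled in the proof of Lemma~5.1, using that $\sqrt{\la_i\la_j}\,|\eta^i-\eta^j|\to\infty$ on $D_\mu$. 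As a consistency check, the answer agrees with formally differentiating the expansion of Lemma~5.1 in $\la_j$: indeed $b_1C_{N,k}\va_{12}/\la_j=\pa_{\la_j}\bigl(C_{N,k}\kappa\,\va_{12}\bigr)+O(\va_{12}^{1+\tau}/\la_j)$ with $\kappa=\int_{\N}\frac{dx}{|y|[(1+|y|)^2+|z|^2]^{N/2}}$, because $1-|x|^2=[(1+|y|)^2+|z|^2]-2(|y|+|x|^2)$ together with the scaling relation $\int_{\N}\frac{dx}{|y|[(1+\la|y|)^2+\la^2|z|^2]^{N/2}}=\la^{1-N}\kappa$ give $\frac{N}{2}\int_{\N}\frac{(1-|x|^2)\,dx}{|y|[(1+|y|)^2+|z|^2]^{(N+2)/2}}=\frac{2-N}{N}\kappa$.
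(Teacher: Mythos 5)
Your argument is correct and is essentially the paper's own proof: the paper writes $\frac{N}{N-2}\Uj^{\frac{2}{N-2}}\frac{\pa\Uj}{\pa\la_j}$ as $\frac{N}{2\la_j}\Uj^{\frac{N}{N-2}}$ minus a second explicit term and estimates the two pieces separately by the rescaling/domain-splitting of Lemma 5.1, whereas you combine them beforehand via the chain-rule identity $\frac{N}{N-2}U^{\frac{2}{N-2}}\pa_\la U=\pa_\la\bigl(U^{\frac{N}{N-2}}\bigr)$ and then run the same single Lemma 5.1-type estimate, the algebra producing the numerator $1-|x|^2$ being identical either way. One small slip in your (non-essential) final consistency check: the scaling relation gives $b_1=\frac{2-N}{2}\kappa$ rather than $\frac{2-N}{N}\kappa$, and it is the former value that matches $\pa_{\la_j}\va_{12}=-\frac{N-2}{2}\va_{12}/\la_j$ (and is negative, as required by Lemma 5.9).
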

\begin{proof}
\begin{eqnarray*}
&&\frac{N}{N-2}\int_{\N}\frac{1}{|y|}\Uj^{\frac{2}{N-2}}\frac{\pa\Uj}{\pa\la_j}\Ui
dx\\
=&&\frac{N}{2\la_j}\int_{\N}\frac{1}{|y|}\Uj^{\frac{N}{N-2}}\Ui
dx-N\la_1\int_{\N}\frac{1}{|y|}\Uj^{\frac{N}{N-2}}\frac{(|y|+\frac{1}{\la_j})|y|+|z-\eta^j|^2}
{(1+\la_j|y|)^2+\la_j^2|z-\eta|^2}\Ui dx\\
=&&:I_1-NI_2.
\end{eqnarray*}
Proceeding as done  in the proof of Lemma 5.1, we find that
\begin{eqnarray*}
&&I_2\\
=&&\frac{C_{N,k}}{\la_j}\int_{\N}\frac{(1+|y|)|y|+|z|^2}{|y|[(1+|y|)^2+|z|^2]^{\frac{N+2}{2}}}
\frac{dx}{\Bigl(\frac{\la_1}{\la_2}(1+\frac{\la_2}{\la_1}|y|)^2
+|\sqrt{\frac{\la_2}{\la_1}}z-
\sqrt{\la_1\la_2}(\eta^2-\eta^1)|^2\Bigl)^{\frac{N-2}{2}}}\\
=&&
\frac{C_{N,k}\va_{12}}{\la_j}\int_{\N}\frac{(1+|y|)|y|+|z|^2}{|y|[(1+|y|)^2+|z|^2]^{\frac{N+2}{2}}}dx
+O\Bigl(\frac{\va_{12}^{1+\tau}}{\la_j}\Bigl).
\end{eqnarray*}

$I_1$ has been estimated in Lemma 5.1. As a result, we complete
the proof.
\end{proof}

\begin{lemma}\label{4.3} Under the assumptions of Lemma 5.2, we
have for $j=1,2,$
\begin{eqnarray*}
\int_{\N}\frac{K(x)}{|y|}\Uj^{\frac{N}{N-2}}\frac{\pa\Uj}{\pa\la_j}dx&=&\frac{(N-2)C_{N,k}}{2\la_j^{\gamma_j+1}}
\Bigl[\frac{b^j_2}{k}\sum_{i=1}^{k}\xi^j_i
+\frac{b^j_3}{h}\sum_{i=1}^{h}a^j_i\Bigl]+O\Bigl(\frac{1}{\la_j^{\gamma_j+1+\sigma}}\Bigl)\\
&&+
O\Bigl(\frac{1}{\la_j}|\eta^j-\bar{\eta}^j|^{\gamma_j+\sigma}\Bigl)
+O\Bigl(\frac{1}{\la^{\gamma_j}_j}|\eta^j-\bar{\eta}^j|\Bigl),
\end{eqnarray*}
where
\begin{eqnarray*}
b_2^j=\int_{\N}
\frac{|y|^{\gamma_j}(1-|y|^2-|z|^2)dx}{|y|[(1+|y|)^2+|z|^2]^{N}},\,\,\,\,\,b_3^j=\int_{\N}
\frac{|z|^{\gamma_j}(1-|y|^2-|z|^2)dx}{|y|[(1+|y|)^2+|z|^2]^{N}}.
\end{eqnarray*}
\end{lemma}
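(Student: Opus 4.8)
The plan is to expand the integrand $\frac{K(x)}{|y|}\Uj^{\frac{N}{N-2}}\frac{\pa\Uj}{\pa\la_j}$ after the natural rescaling centered at the concentration point $(0,\bar{\eta}^j)$, and to extract the leading contribution from the flatness expansion \eqref{K} of $K$. First I would change variables by setting $x=\la_j^{-1}\tilde{x}+(0,\eta^j)$, so that $\Uj$ becomes a fixed multiple of $U_{0,1}(\tilde{x})$ up to the factor $\la_j^{(N-2)/2}$, and $\frac{\pa\Uj}{\pa\la_j}$ produces the standard $\la_j^{-1}$ prefactor together with the explicit profile $\frac{N-2}{2}\la_j^{(N-4)/2}\cdot\frac{1-|\tilde{x}|^2+(\text{lower order})}{[(1+|\tilde y|)^2+|\tilde z|^2]^{N/2}}$ that, after combining with $\Uj^{\frac{N}{N-2}}$, gives the kernel $\frac{1-|\tilde x|^2}{|\tilde y|[(1+|\tilde y|)^2+|\tilde z|^2]^{N}}$ appearing in $b_2^j$ and $b_3^j$. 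The Jacobian contributes $\la_j^{-N}$, and bookkeeping of all powers of $\la_j$ yields the overall prefactor $\frac{(N-2)C_{N,k}}{2\la_j^{\gamma_j+1}}$ once the $\la_j^{-\gamma_j}$ from the expansion of $K$ is taken into account.

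Next I would insert the expansion \eqref{K} for $K$ near $(0,\bar\eta^j)$. Writing $x-(0,\bar\eta^j)=\la_j^{-1}\tilde x+(\eta^j-\bar\eta^j)$, the constant term $K(0,\bar\eta^j)$ integrates against an odd-in-$\la_j$-scaling factor and, using that $\frac{\pa\Uj}{\pa\la_j}$ is the $\la$-derivative of a solution, contributes only to the lower-order error (this is the usual cancellation: $\int \frac{1}{|y|}\Uj^{\frac{N}{N-2}}\frac{\pa\Uj}{\pa\la_j}=\frac{1}{2}\frac{d}{d\la_j}\int\frac{1}{|y|}\Uj^{\frac{2(N-1)}{N-2}}$ times a constant, which vanishes by scale invariance of $\int\frac{1}{|y|}U_{0,1}^{\frac{2(N-1)}{N-2}}$). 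The homogeneous terms $\sum_i\xi_i^j|y|^{\gamma_j}$ and $\sum_i a_i^j|z_i-\bar\eta_i^j|^{\gamma_j}$ are the ones that survive: after rescaling, $|y|^{\gamma_j}=\la_j^{-\gamma_j}|\tilde y|^{\gamma_j}$ and $|z_i-\bar\eta_i^j|^{\gamma_j}$ splits into a main part $\la_j^{-\gamma_j}|\tilde z_i|^{\gamma_j}$ plus cross terms controlled by $|\eta^j-\bar\eta^j|$. Summing over $i$ and using the symmetry of $U_{0,1}$ in the $z$-variables (so that each $\int |\tilde z_i|^{\gamma_j}(\cdots)$ equals $\frac1h\int|\tilde z|^{\gamma_j}(\cdots)$, and likewise in $y$), one obtains exactly $\frac{(N-2)C_{N,k}}{2\la_j^{\gamma_j+1}}\big[\frac{b_2^j}{k}\sum\xi_i^j+\frac{b_3^j}{h}\sum a_i^j\big]$. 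The remainder $O(|x-(0,\bar\eta^j)|^{\gamma_j+\sigma})$ in \eqref{K} produces, after rescaling and splitting $|x-(0,\bar\eta^j)|\le C(\la_j^{-1}|\tilde x|+|\eta^j-\bar\eta^j|)$, the error terms $O(\la_j^{-\gamma_j-1-\sigma})$ and $O(\la_j^{-1}|\eta^j-\bar\eta^j|^{\gamma_j+\sigma})$, while the cross terms from the translation by $\eta^j-\bar\eta^j$ in the homogeneous part give the $O(\la_j^{-\gamma_j}|\eta^j-\bar\eta^j|)$ term.

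The main technical obstacle will be justifying the integral convergence and the truncation: the expansion \eqref{K} is only valid in $|x-(0,\bar\eta^j)|<\delta$, so I would split $\N$ into $B_{\delta}(0,\bar\eta^j)$ and its complement, estimate the far region using the decay $\Uj^{\frac{N}{N-2}}\frac{\pa\Uj}{\pa\la_j}=O(\la_j^{-1}\Uj^{\frac{2(N-1)}{N-2}})$ and the boundedness of $K$ (this piece is $O(\la_j^{-1}\cdot\la_j^{-\text{(large)}})$, absorbed into the error), and then check in the near region that the rescaled integrals $\int\frac{|\tilde y|^{\gamma_j}(1-|\tilde x|^2)}{|\tilde y|[(1+|\tilde y|)^2+|\tilde z|^2]^N}d\tilde x$ and its $z$-analogue converge: convergence at the origin needs $\gamma_j>0$ against the $|\tilde y|^{-1}$ weight in $\R^k$, and convergence at infinity needs $\gamma_j<N-2$ (i.e. the condition $\gamma\in(1,N-2)$ imposed in \eqref{K}) so that $|\tilde x|^{\gamma_j-2}/|\tilde x|^{2(N-1)}$ is integrable. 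These are exactly the constraints already built into the hypotheses, so the computation closes; the bookkeeping of the $O$-terms is routine once the scaling is set up, and I would present it in parallel with the already-established Lemma 5.2.
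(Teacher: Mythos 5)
Your proposal matches the paper's proof essentially step for step: truncate to the ball $|x-(0,\bar\eta^j)|\le\delta$ where \eqref{K} holds, rescale about the concentration point so that $\Uj^{\frac{N}{N-2}}\frac{\pa\Uj}{\pa\la_j}$ produces the kernel $\frac{(1-|y|^2-|z|^2)}{|y|[(1+|y|)^2+|z|^2]^N}$ with prefactor $\frac{(N-2)C_{N,k}}{2\la_j^{\gamma_j+1}}$, expand the translated homogeneous terms to isolate the $O(\la_j^{-\gamma_j}|\eta^j-\bar\eta^j|)$ cross term, and symmetrize to get the $\frac{1}{k}$ and $\frac{1}{h}$ factors. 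Your explicit treatment of the constant term $K(0,\bar\eta^j)$ via scale invariance of $\int|y|^{-1}U_{0,1}^{2(N-1)/(N-2)}$ is a correct detail the paper leaves implicit, so the argument is sound and no further comparison is needed.
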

\begin{proof}
\begin{eqnarray*}
&&\int_{\N}\frac{K(x)}{|y|}\Uj^{\frac{N}{N-2}}\frac{\pa\Uj}{\pa\la_j}dx\\
&=&\int_{\{x\in\N:|x-(0,\bar{\eta}^j)|\leq
\delta\}}\Bigl(\sum\limits_{i=1}^{k}\xi^j_i|y_i^j|^{\gamma_j}+\sum\limits_{i=1}^{h}a^j_i|z_i-\bar{\eta}_i^j|^{\gamma_j}
+O(|x-(0,\bar{\eta}^j)|^{\gamma_j+\sigma})\Bigl)\times\\
&&\times\frac{1}{|y|}\Uj^{\frac{N}{N-2}}\frac{\pa\Uj}{\pa\la_j}dx+O\Bigl(\frac{1}{\la_j^{N}}\Bigl)\\
&=&\frac{C_{N,k}(N-2)}{2\la_j^{\gamma_j+1}}\int_{\N}\Bigl(\sum\limits_{i=1}^{k}\xi_i^j|y_i|^{\gamma_j}+\sum\limits_{i=1}^{h}
a^j_i
|z_i+\la_j(\eta_i^j-\bar{\eta}_i^j)|^{\gamma_j}\Bigl)\frac{(1-|y|^2-|z|^2)dx}{|y|[(1+|y|)^2+|z|^2]^{N}}\\
&&+O\Bigl(\frac{1}{\la_j^{\gamma_j+1+\sigma}}\Bigl)
+O\Bigl(\frac{1}{\la_j}|\eta^j-\bar{\eta}^j|^{\gamma_j+\sigma}\Bigl)\\
&=&\frac{C_{N,k}(N-2)}{2\la_j^{\gamma_j+1}}\int_{\N}\Bigl(\sum\limits_{i=1}^{k}\xi_i^j|y_i|^{\gamma_j}+\sum\limits_{i=1}^{h}
a^j_i
|z_i|^{\gamma_j}\Bigl)\frac{(1-|y|^2-|z|^2)dx}{|y|[(1+|y|)^2+|z|^2]^{N}}\\
&&+O\Bigl(\frac{1}{\la_j^{\gamma_j+1+\sigma}}\Bigl)+
O\Bigl(\frac{1}{\la_j}|\eta^j-\bar{\eta}^j|^{\gamma_j+\sigma}\Bigl)
+O\Bigl(\frac{1}{\la^{\gamma_j}_j}|\eta^j-\bar{\eta}^j|\Bigl)\\
&=&\frac{C_{N,k}(N-2)}{2k\la_j^{\gamma_j+1}}\sum\limits_{i=1}^{k}\xi_i^j
\int_{\N}\frac{|y|^{\gamma_j}(1-|y|^2-|z|^2)dx}{|y|[(1+|y|)^2+|z|^2]^{N}}\\
&&+\frac{C_{N,k}(N-2)}{2h\la_j^{\gamma_j+1}}\sum\limits_{i=1}^{h}a_i^j\int_{\N}
\frac{|z|^{\gamma_j}(1-|y|^2-|z|^2)dx}{|y|[(1+|y|)^2+|z|^2]^{N}}\\
&&+O\Bigl(\frac{1}{\la_j^{\gamma_j+1+\sigma}}\Bigl)+
O\Bigl(\frac{1}{\la_j}|\eta^j-\bar{\eta}^j|^{\gamma_j+\sigma}\Bigl)
+O\Bigl(\frac{1}{\la^{\gamma_j}_j}|\eta^j-\bar{\eta}^j|\Bigl)\\
 &=&\frac{(N-2)C_{N,k}}{2\la_j^{\gamma_j+1}}
\Bigl[\frac{b^j_2}{k}\sum_{i=1}^{k}\xi^j_i
+\frac{b^j_3}{h}\sum_{i=1}^{h}a^j_i\Bigl]\\
&&+O\Bigl(\frac{1}{\la_j^{\gamma_j+1+\sigma}}\Bigl)+
O\Bigl(\frac{1}{\la_j}|\eta^j-\bar{\eta}^j|^{\gamma_j+\sigma}\Bigl)
+O\Bigl(\frac{1}{\la^{\gamma_j}_j}|\eta^j-\bar{\eta}^j|\Bigl).
\end{eqnarray*}
\end{proof}

\begin{lemma}\label{4.4}
Suppose $(\eta,\la)\in D_\mu$, $v\in E_{\eta,\la}$. If $\mu$ and
$\va$ are small, then for $i=1,2,$
\begin{eqnarray*}
&&\Bigl|\int_{\N}\frac{1+\va K(x)}{|y|}\Bigl(\sum\limits_{j=1}^2\Uj\Bigl)^{\frac{2}{N-2}}\frac{\pa\Ui}{\pa\la_i}vdx\Bigl|\\
&=&O\Bigl(\frac{\va_{12}^{1/2+\tau}}{\la_i}+\frac{\va}{\la_i}\sum\limits_{j=1}^2
\Bigl(\frac{1}{\la_j^{\gamma_j}}+|\eta^j-\bar{\eta}^j|^{\gamma_j}\Bigl)\Bigl)\|v\|.
\end{eqnarray*}
\end{lemma}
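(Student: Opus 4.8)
\emph{Proof strategy.} The plan is to split the coefficient as $\dfrac{1+\va K(x)}{|y|}=\dfrac1{|y|}+\va\dfrac{K(x)}{|y|}$ and to exploit twice the orthogonality built into $E_{\eta,\la}$ through the linearized equation satisfied by $\frac{\pa\Ui}{\pa\la_i}$. Indeed, differentiating $-\Delta\Ui=\Ui^{\frac N{N-2}}/|y|$ with respect to $\la_i$ gives $-\Delta\frac{\pa\Ui}{\pa\la_i}=\frac N{N-2}\frac{\Ui^{2/(N-2)}}{|y|}\frac{\pa\Ui}{\pa\la_i}$, so testing against $v$ and using $v\in E_{\eta,\la}$ yields $\int_{\N}\frac{\Ui^{2/(N-2)}}{|y|}\frac{\pa\Ui}{\pa\la_i}v\,dx=\frac{N-2}{N}\bigl\langle\frac{\pa\Ui}{\pa\la_i},v\bigr\rangle=0$. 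I shall use freely the pointwise bound $\bigl|\frac{\pa\Ui}{\pa\la_i}\bigr|\le C\la_i^{-1}\Ui$ and the Hardy--Sobolev embedding $\D\hookrightarrow L^{2(N-1)/(N-2)}(|y|^{-1}dx)$.

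\textbf{The term without $\va$.} First I would write $\bigl(\sum_{j=1}^2\Uj\bigr)^{\frac2{N-2}}=\Ui^{\frac2{N-2}}+\bigl[\bigl(\sum_j\Uj\bigr)^{\frac2{N-2}}-\Ui^{\frac2{N-2}}\bigr]$. The $\Ui^{\frac2{N-2}}$-piece contributes $\int_{\N}\frac{\Ui^{2/(N-2)}}{|y|}\frac{\pa\Ui}{\pa\la_i}v\,dx=0$ by the identity above. For the remainder, writing $j$ for the index $\ne i$, the elementary inequality $\bigl|(\Ui+\Uj)^{p}-\Ui^{p}\bigr|\le C\Uj^{p}$ with $p=\frac2{N-2}\le1$ (i.e.\ $N\ge4$) reduces it to $C\la_i^{-1}\int_{\N}\frac1{|y|}\Uj^{2/(N-2)}\Ui|v|\,dx$, which by H\"older's inequality, the Hardy--Sobolev embedding, and the interaction estimate used for $f_\va$ in Proposition 3.2 (based on Lemma 5.1) is $O\bigl(\va_{12}^{1/2+\tau}\la_i^{-1}\bigr)\|v\|$.

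\textbf{The term with $\va$.} Replacing once more $\bigl(\sum_j\Uj\bigr)^{\frac2{N-2}}$ by $\Ui^{\frac2{N-2}}$ costs only $\va\cdot O\bigl(\va_{12}^{1/2+\tau}\la_i^{-1}\bigr)\|v\|$, which is absorbed. In the main piece $\va\int_{\N}\frac{K(x)}{|y|}\Ui^{2/(N-2)}\frac{\pa\Ui}{\pa\la_i}v\,dx$ I would write $K(x)=K(0,\bar{\eta}^i)+\bigl(K(x)-K(0,\bar{\eta}^i)\bigr)$; the constant $K(0,\bar{\eta}^i)$ yields a multiple of $\int_{\N}\frac{\Ui^{2/(N-2)}}{|y|}\frac{\pa\Ui}{\pa\la_i}v\,dx=0$, again by orthogonality. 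For $K(x)-K(0,\bar{\eta}^i)$ I would split $\N$ into $B_\delta\bigl((0,\bar{\eta}^i)\bigr)$ and its complement: off the ball $\Ui$ and $\frac{\pa\Ui}{\pa\la_i}$ are uniformly small and that part is of strictly higher order; on the ball I would use \eqref{K}, i.e.\ $|K(x)-K(0,\bar{\eta}^i)|\le C|x-(0,\bar{\eta}^i)|^{\gamma_i}$, then rescale $x=\la_i^{-1}\tilde x+(0,\eta^i)$ so that $\Ui(x)=\la_i^{(N-2)/2}U_{0,1}(\tilde x)$, $v(x)=\la_i^{(N-2)/2}\tilde v(\tilde x)$ with $\|\tilde v\|=\|v\|$, and bound $|x-(0,\bar{\eta}^i)|^{\gamma_i}\le C\bigl(\la_i^{-\gamma_i}|\tilde x|^{\gamma_i}+|\eta^i-\bar{\eta}^i|^{\gamma_i}\bigr)$. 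After the change of variables all powers of $\la_i$ cancel except an overall $\va/\la_i$, leaving
\[
C\,\frac{\va}{\la_i}\Bigl(\la_i^{-\gamma_i}+|\eta^i-\bar{\eta}^i|^{\gamma_i}\Bigr)\int_{\N}\frac1{|\tilde y|}U_{0,1}^{\frac N{N-2}}\bigl(1+|\tilde x|^{\gamma_i}\bigr)|\tilde v|\,d\tilde x,
\]
and one more H\"older step plus Hardy--Sobolev bounds the last integral by $C\|v\|$, the weighted $L^{2(N-1)/N}$-norm of $U_{0,1}^{N/(N-2)}(1+|\tilde x|^{\gamma_i})$ being finite since $\gamma_i<N-2$. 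Thus the $\va$-term is $O\bigl(\frac{\va}{\la_i}(\la_i^{-\gamma_i}+|\eta^i-\bar{\eta}^i|^{\gamma_i})\bigr)\|v\|$; adding the two contributions and enlarging the index $i$ to the sum over $j=1,2$ gives the claim.

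\textbf{Main obstacle.} The delicate point will be the H\"older bookkeeping against the singular weight $|y|^{-1}$: one splits $|y|^{-1}=|y|^{-(N-2)/(2(N-1))}|y|^{-N/(2(N-1))}$, places $v$ in the critical space $L^{2(N-1)/(N-2)}(|y|^{-1}dx)$, and must then check that the leftover bubble factors --- in particular $U_{0,1}^{N/(N-2)}|\tilde x|^{\gamma_i}$ --- lie in the conjugate space; this is where $k\ge2$ (local integrability of $|y|^{-1}$) and the flatness restriction $\gamma_i<N-2$ are really used, and also where one has to be careful that $\Ui$ is centred at $(0,\eta^i)$ while \eqref{K} is expanded at $(0,\bar{\eta}^i)$, the shift $\eta^i-\bar{\eta}^i$ being exactly what produces the term $|\eta^i-\bar{\eta}^i|^{\gamma_i}$.
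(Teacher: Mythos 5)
Your argument is essentially the paper's own (three‑line) proof of this lemma: replace $\bigl(\sum_j\Uj\bigr)^{2/(N-2)}$ by $\Ui^{2/(N-2)}$ at the cost of $O(\va_{12}^{1/2+\tau}/\la_i)\|v\|$ via the interaction estimates of Lemma 5.1, annihilate the piece with constant coefficient $1+\va K(0,\bar{\eta}^i)$ through the orthogonality $v\in E_{\eta,\la}$ and the linearized equation for $\frac{\pa\Ui}{\pa\la_i}$, and control the $K(x)-K(0,\bar{\eta}^i)$ remainder by the flatness condition \eqref{K} after rescaling — so the approach and the decomposition coincide. The one imprecision is your closing integrability claim: the weighted dual norm requires $\int_{\N}|y|^{-1}\bigl(U_{0,1}^{N/(N-2)}|\tilde x|^{\gamma_i}\bigr)^{2(N-1)/N}d\tilde x<\infty$, whose actual threshold is $\gamma_i<N/2$ rather than $\gamma_i<N-2$, so for $N\ge5$ and $\gamma_i\in[N/2,\,N-2)$ you must retain the restriction of the rescaled integral to $|\tilde x|\le C\la_i\delta$ (or split the Hölder exponents differently) to recover the stated rate — a point on which the paper is equally silent.
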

\begin{proof} Similarly to the proof of Lemma 4.3, we have
\begin{eqnarray*}
&&\Bigl|\int_{\N}\frac{1+\va K(x)}{|y|}\Bigl(\sum\limits_{j=1}^2\Uj\Bigl)^{\frac{2}{N-2}}\frac{\pa\Ui}{\pa\la_i}vdx\Bigl|\\
&=&\Bigl|\int_{\N}\frac{1+\va
K(x)}{|y|}\Ui^{\frac{2}{N-2}}\frac{\pa\Ui}{\pa\la_i}vdx\Bigl|
+O\Bigl(\frac{\va_{12}^{1/2+\tau}}{\la_i}\Bigl)\|v\|\\
&=&(1+\va
K(0,\bar{\eta}^i))\Bigl\langle\frac{\pa\Ui}{\pa\la_i},\,v
\Bigl\rangle\\
&& +\va\Bigl|\int_{\pa\N}(
K(x)-K(0,\bar{\eta}^i))\frac{1}{|y|}\Ui^{\frac{2}{N-2}}\frac{\pa\Ui}{\pa\la_i}v dx\Bigl|+O\Bigl(\frac{\va_{12}^{1/2+\tau}}{\la_i}\Bigl)\|v\|\\
&=&O\Bigl(\frac{\va_{12}^{1/2+\tau}}{\la_i}+\frac{\va}{\la_i}\sum\limits_{j=1}^2
\Bigl(\frac{1}{\la_j^{\gamma_j}}+|\eta^j-\bar{\eta}^j|^{\gamma_j}\Bigl)\Bigl)\|v\|.
\end{eqnarray*}
\end{proof}

\begin{lemma}\label{4.5} For $(\eta,\la)\in D_\mu$, $\mu$ small, we
have for $j\neq i,\,j,i=1,2,\,l=1,\cdot\cdot\cdot,h,$
\begin{eqnarray*}
&&\frac{N}{N-2}\int_{\N}\frac{1}{|y|}\Uj^{\frac{2}{N-2}}\frac{\pa\Uj}{\pa\eta^j_l}\Ui
dx\\
=&&\frac{C_{N,k}(N-2)}{h}\va_{12}(\eta^j_l-\eta^i_l)
\int_{\R^{N}}\frac{|z|^2}{|y|[(1+|y|)^2+|z|^2]^{\frac{N+2}{2}}}dx\\
&&+O\Bigl(\la_j\va_{12}^{1+\tau}\Bigl).
\end{eqnarray*}
\end{lemma}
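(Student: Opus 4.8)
\noindent\emph{Proof idea.} The plan is to recast this as an interaction integral of the same type as in Lemma 5.1, handled one Taylor order deeper. First I would compute $\frac{\pa\Uj}{\pa\eta^j_l}$ directly: with $D_j=(1+\la_j|y|)^2+\la_j^2|z-\eta^j|^2$ one gets $\frac{\pa\Uj}{\pa\eta^j_l}=(N-2)\la_j^2(z_l-\eta^j_l)\,\Uj/D_j=-\frac{\pa\Uj}{\pa z_l}$, and hence the pointwise identity $\frac{N}{N-2}\,\Uj^{\frac{2}{N-2}}\frac{\pa\Uj}{\pa\eta^j_l}=-\frac{\pa}{\pa z_l}\bigl(\Uj^{\frac{N}{N-2}}\bigr)$. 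Since $1/|y|$ does not depend on $z$, integrating by parts in $z_l$ (no boundary terms) rewrites the left-hand side of the lemma as $\int_{\N}\frac1{|y|}\,\Uj^{\frac{N}{N-2}}\,\frac{\pa\Ui}{\pa z_l}\,dx$. Here $\Uj^{N/(N-2)}/|y|$ is a nonnegative profile concentrated near $(0,\eta^j)$ at scale $\la_j^{-1}$, while $\frac{\pa\Ui}{\pa z_l}$ is smooth and slowly varying there (its center $(0,\eta^i)$ is at distance $\sim|\eta^i-\eta^j|$, which stays bounded below on $D_\mu$).

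Next I would rescale $x=(0,\eta^j)+\la_j^{-1}\tilde x$ and split $\N$ into the bulk $\{|\tilde x|\le\sqrt{\la_1\la_2}/10\}$ and its complement, exactly as in the $\Om_1\cup\Om_2\cup\Om_3$ decomposition used to prove Lemma 5.1. On the bulk I would freeze $\frac{\pa\Ui}{\pa z_l}$ at the center $(0,\eta^j)$, so that the main term becomes $\frac{\pa\Ui}{\pa z_l}(0,\eta^j)\cdot\int_{\N}\frac1{|y|}\Uj^{\frac{N}{N-2}}\,dx$. Evaluating $\frac{\pa\Ui}{\pa z_l}(0,\eta^j)=-(N-2)\la_i^2(\eta^j_l-\eta^i_l)\,\Ui(0,\eta^j)/D_i(0,\eta^j)$ with $D_i(0,\eta^j)=1+\la_i^2|\eta^i-\eta^j|^2\sim\la_i^2|\eta^i-\eta^j|^2$, using the scaling identity $\int_{\N}\frac1{|y|}\Uj^{\frac{N}{N-2}}dx=c\,\la_j^{(2-N)/2}\int_{\N}\frac{dx}{|y|[(1+|y|)^2+|z|^2]^{N/2}}$, and the elementary identity $\int_{\N}\frac{|z|^2\,dx}{|y|[(1+|y|)^2+|z|^2]^{(N+2)/2}}=\frac{h}{N}\int_{\N}\frac{dx}{|y|[(1+|y|)^2+|z|^2]^{N/2}}$ (integrate $\mathrm{div}_z\bigl(z[(1+|y|)^2+|z|^2]^{-N/2}\bigr)$ over $z$), the main term reorganizes into a constant times $\va_{12}(\eta^j_l-\eta^i_l)\int_{\N}\frac{|z|^2}{|y|[(1+|y|)^2+|z|^2]^{(N+2)/2}}dx$, which is the stated leading term (absorbing the bounded factor $|\eta^i-\eta^j|^{-2}$, or keeping it explicit). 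The outer regions $\Om_2,\Om_3$ and the error from freezing $\frac{\pa\Ui}{\pa z_l}$ on $\Om_1$ are controlled by the crude pointwise bounds used in Lemma 5.1 together with the extra factor $\la_j^2(z_l-\eta^j_l)=\la_j\tilde z_l$ carried along by the $z_l$-derivative; all of them are $O(\la_j\va_{12}^{1+\tau})$.

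The hard part is the splitting bookkeeping. Unlike in Lemma 5.1, where evaluating the slowly varying factor at a single point already gives the leading term, here that factor is itself a derivative, so its zeroth-order piece either vanishes by the parity of $z_l-\eta^j_l$ about $(0,\eta^j)$ (if one expands $\Ui$ around that point directly) or has a logarithmically divergent first moment against $\Uj^{N/(N-2)}/|y|$ (in the integration-by-parts form); either way the genuine $\va_{12}$-order term sits one Taylor order down. Getting the constant right therefore requires carrying one more term of the expansion than in Lemma 5.1 and checking that $\Om_2,\Om_3$ and the frozen-factor remainder enter only at order $\la_j\va_{12}^{1+\tau}$. Once the cut-offs are chosen as there, the remaining estimates are routine.
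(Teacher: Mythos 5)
Your argument is correct and is essentially the proof the paper intends: the paper's own ``proof'' is a one-line citation of Bahri's estimate (F16) together with the $\Om_1\cup\Om_2\cup\Om_3$ splitting of Lemma 5.1, and your integration by parts in $z_l$ (via $\frac{\pa\Uj}{\pa\eta^j_l}=-\frac{\pa\Uj}{\pa z_l}$) is exactly the reduction to the (F16) form with the derivative falling on the far bubble, after which freezing at the concentration point and splitting as in Lemma 5.1 produce the stated expansion. Your remark about the factor $|\eta^i-\eta^j|^{-2}$ is also well taken --- the leading coefficient as printed is imprecise (a direct check, or differentiating the expansion of Lemma 5.1 in $\eta^j_l$, gives an extra factor $-N|\eta^i-\eta^j|^{-2}$) --- but this is harmless because Lemma 4.2 only uses the term as an unspecified constant multiple of $(\eta^j_l-\eta^i_l)\va_{12}$.
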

\begin{proof}
The proof can be completed with the same arguments as that of
estimate (F16) in \cite{Ba} and Lemma 4.1.
\end{proof}

\begin{lemma}\label{4.6} For $(\eta,\la)\in D_\mu$, $\mu$ small, we
have for $j=1,2,\,i=1,\cdot\cdot\cdot,h,$
\begin{eqnarray*}
\int_{\N}\frac{K(x)}{|y|}\Uj^{\frac{N}{N-2}}\frac{\pa\Uj}{\pa\eta^j_i}dx
&=&\frac{b_4^j(\eta^j_i-\bar{\eta}^j_i)a^j_i}{\la_j^{\gamma_j-2}}
+O\Bigl(\frac{\la_j^2|\eta^j-\bar{\eta}^j|^2}{\la_j^{\gamma_j-1}}\Bigl)\\
&&+O\Bigl(\frac{1}{\la_j^{\gamma_j-1+\sigma}}+\la_j|\eta^j-\bar{\eta}^j|^{\gamma_j+\sigma}\Bigl)
+O\Bigl(\frac{1}{\la_j^{N-1}}\Bigl).
\end{eqnarray*}
where
$$
b_4^j=\frac{C_{N,k}(N-2)\gamma_j}{h}\int_{\N}\frac{|z|^{\gamma_j}dx}{|y|[(1+|y|)^2+|z|^2]^N
}.
$$
\end{lemma}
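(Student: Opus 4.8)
The plan is to repeat, with one twist, the argument of Lemma~5.3. The new point is a parity remark: a short computation gives $\frac{\pa\Uj}{\pa\eta^j_i}=\dfrac{\la_j\,(z_i-\eta^j_i)}{k-1}\,\Uj^{N/(N-2)}$, so the density $\frac1{|y|}\Uj^{N/(N-2)}\frac{\pa\Uj}{\pa\eta^j_i}$ is \emph{odd} in the variable $z_i-\eta^j_i$ (an odd factor times an even profile), whereas $\frac{\pa\Uj}{\pa\la_j}$ was even. Consequently only the part of $K$ which is odd in $z_i-\eta^j_i$ contributes, and that part is produced by expanding the term $a^j_i|z_i-\bar\eta^j_i|^{\gamma_j}$ of \eqref{K} about the centre $\eta^j_i$ instead of about $\bar\eta^j_i$ — which is why the leading term carries the factor $\eta^j_i-\bar\eta^j_i$.

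I would first localise: split $\int_\N=\int_{B_\delta(0,\bar\eta^j)}+\int_{\N\setminus B_\delta(0,\bar\eta^j)}$. The pointwise bounds $\Uj^{N/(N-2)}\le C\la_j^{-N/2}|x-(0,\eta^j)|^{-N}$ and $|\frac{\pa\Uj}{\pa\eta^j_i}|\le C\la_j^{1-N/2}|x-(0,\eta^j)|^{1-N}$, together with the boundedness of $K$ and $|\eta^j-\bar\eta^j|\le\mu<\delta/2$, show that the exterior integral is $O(\la_j^{1-N})$, accounting for the last error term. On $B_\delta(0,\bar\eta^j)$ I insert \eqref{K} and rescale by $x=(0,\eta^j)+\la_j^{-1}\tilde x$, which turns $\frac1{|y|}\Uj^{N/(N-2)}\frac{\pa\Uj}{\pa\eta^j_i}\,dx$ into $(N-2)C_{N,k}\,\la_j\,\tilde z_i\bigl(|\tilde y|[(1+|\tilde y|)^2+|\tilde z|^2]^{N}\bigr)^{-1}d\tilde x$. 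The constant piece of $K$ contributes $K(0,\bar\eta^j)\int_\N\frac1{|y|}\Uj^{N/(N-2)}\frac{\pa\Uj}{\pa\eta^j_i}dx+O(\la_j^{1-N})$, and the full-space integral vanishes since $\Uj^{N/(N-2)}\frac{\pa\Uj}{\pa\eta^j_i}=\frac1{\s}\frac{\pa}{\pa\eta^j_i}\Uj^{\s}$ while $\int_\N|y|^{-1}\Uj^{\s}dx$ is independent of $\eta^j$ by translation in $z$. Every term $\xi^j_l|y_l|^{\gamma_j}$ and every term $a^j_l|z_l-\bar\eta^j_l|^{\gamma_j}$ with $l\ne i$ is even in $\tilde z_i$, hence integrates to zero against the odd kernel, with $O(\la_j^{1-N})$ tails (the restrictions $\gamma_j<N-2$, $\sigma<1$, $k\ge2$ keeping all the $\tilde x$-integrals convergent).

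Thus only $a^j_i\int_{B_\delta(0,\bar\eta^j)}\frac1{|y|}|z_i-\bar\eta^j_i|^{\gamma_j}\Uj^{N/(N-2)}\frac{\pa\Uj}{\pa\eta^j_i}dx$ survives. Writing $t=z_i-\eta^j_i$, $d=\eta^j_i-\bar\eta^j_i$, I retain only the odd-in-$t$ part of $|t+d|^{\gamma_j}$ and expand in $d$, using
\[
\tfrac12\bigl(|t+d|^{\gamma_j}-|t-d|^{\gamma_j}\bigr)=\gamma_j|t|^{\gamma_j-2}t\,d+\tfrac12\int_{-d}^{d}\bigl(\gamma_j|t+r|^{\gamma_j-2}(t+r)-\gamma_j|t|^{\gamma_j-2}t\bigr)\,dr .
\]
With $t=\tilde z_i/\la_j$, the main term, after rescaling and a direct evaluation of the resulting radial $\tilde z$-integral, gives exactly $\dfrac{b_4^j\,a^j_i\,(\eta^j_i-\bar\eta^j_i)}{\la_j^{\gamma_j-2}}$; the remainder integral, estimated after splitting the $\tilde z_i$-range at $|\tilde z_i|\sim\la_j|\eta^j-\bar\eta^j|$ and multiplied by the $\la_j$ prefactor, produces $O\bigl(\la_j^{2}|\eta^j-\bar\eta^j|^{2}/\la_j^{\gamma_j-1}\bigr)$; and the remainder $O(|x-(0,\bar\eta^j)|^{\gamma_j+\sigma})$ of \eqref{K}, bounded via $|x-(0,\bar\eta^j)|\le\la_j^{-1}|\tilde x|+|\eta^j-\bar\eta^j|$, gives $O\bigl(\la_j^{-(\gamma_j-1+\sigma)}+\la_j|\eta^j-\bar\eta^j|^{\gamma_j+\sigma}\bigr)$. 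Adding the pieces yields the asserted identity.

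The delicate step is the treatment of the odd part of $|z_i-\bar\eta^j_i|^{\gamma_j}$: since $\gamma_j\in(1,N-2)$ need not be an even integer, $t\mapsto|t|^{\gamma_j}$ is only $C^1$ (with a H\"older-continuous derivative), so the remainder integral above has to be bounded by distinguishing the regions $|\tilde z_i|\lesssim\la_j|\eta^j-\bar\eta^j|$ and $|\tilde z_i|\gtrsim\la_j|\eta^j-\bar\eta^j|$ and using the exact modulus of continuity of $t\mapsto|t|^{\gamma_j-2}t$; the convergence of all the auxiliary $\tilde x$-integrals (including those arising in the error terms) is guaranteed by $\gamma_j<N-2$ and $k\ge2$. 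Everything else is a routine repetition of the computations already carried out in Lemma~5.3.
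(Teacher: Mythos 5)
Your proposal is correct and follows essentially the same route as the paper: localize near $(0,\bar\eta^j)$, insert the expansion of $K$, kill all terms except $a^j_i|z_i-\bar\eta^j_i|^{\gamma_j}$ by oddness of the kernel in $z_i-\eta^j_i$, rescale, and Taylor-expand the shifted power to extract the linear term in $\eta^j_i-\bar\eta^j_i$ with constant $b_4^j$. Your treatment is in fact slightly more careful than the paper's at the two points you flag (the vanishing of the constant piece via translation invariance of $\int|y|^{-1}U_{\eta^j,\la_j}^{\s}$, and the expansion of $|t+d|^{\gamma_j}$ for non-integer $\gamma_j$ via an integral representation and region splitting), but these are refinements of the same argument rather than a different approach.
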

\begin{proof}
\begin{eqnarray*}
&&\int_{\N}\frac{K(x)}{|y|}\Uj^{\frac{N}{N-2}}\frac{\pa\Uj}{\pa\eta^j_i}dx\\
&=&(N-2)\int_{\N}\frac{K(x)}{|y|}\Uj^{\s}\frac{\la_j^2(z_i-\eta^j_i)dx}{[(1+\la_j|y|)^2+\la_j^2|z-\eta^j|^2]
}\\
&=&(N-2)\int_{\{x\in\N:|x-(0,\bar{\eta}^j|\leq
\delta\}}\Bigl(\sum\limits_{l=1}^{k}\xi_l^j|y_i|^{\gamma_j}+\sum\limits_{l=1}^{h}a_l^j|z_i-\bar{\eta}^j_i|^{\gamma_j}
+ O(|x-(0,\bar{\eta}^j)|^{\gamma_j+\sigma})\Bigl)\\
&&\times\frac{1}{|y|}\Uj^{\s}\frac{\la_j^2(z_i-\eta^j_i)dx}{[(1+\la_j|y|)^2+\la_j^2|z-\eta^j|^2]
}
+O\Bigl(\frac{1}{\la_j^{N-1}}\Bigl)\\
&=&(N-2)\int_{\N}\Bigl(\sum\limits_{l=1}^{k}\xi_l^j|y_i|^{\gamma_j}
+\sum\limits_{l=1}^{h}a_l^j|z_i-\bar{\eta}^j_i|^{\gamma_j}
\Bigl)\frac{1}{|y|}\Uj^{\s}\frac{\la_j^2(z_i-\eta^j_i)dx}{[(1+\la_j|y|)^2+\la_j^2|z-\eta^j|^2]
}\\
&&O\Bigl(\frac{1}{\la_j^{\gamma_j-1+\sigma}}+\la_j|\eta^j-\bar{\eta}^j|^{\gamma_j+\sigma}\Bigl)
+O\Bigl(\frac{1}{\la_j^{N-1}}\Bigl)\\
&=&\frac{C_{N,k}(N-2)}{\la_j^{\gamma_j-1}}\int_{\N}\Bigl(\sum\limits_{l=1}^{k}\xi_l^j|y_i|^{\gamma_j}
+\sum\limits_{l=1}^{h}a_l^j|z_i+\la_j(\eta^j_i-\bar{\eta}^j_i)|^{\gamma_j}
\Bigl)\frac{z_idx}{|y|[(1+|y|)^2+|z|^2]^N
}\\
&&+O\Bigl(\frac{1}{\la_j^{\gamma_j-1+\sigma}}+\la_j|\eta^j-\bar{\eta}^j|^{\gamma_j+\sigma}\Bigl)
+O\Bigl(\frac{1}{\la_j^{N-1}}\Bigl)\\
&=&\frac{C_{N,k}(N-2)}{\la_j^{\gamma_j-1}}\int_{\N}
\sum\limits_{l=1}^{h}a_l^j\Bigl(|z_i|^{\gamma_j}+\gamma_j\la_j|z_i|^{\gamma_j-2}z_i(\eta^j_i-\bar{\eta}^j_i)
\Bigl)\frac{z_idx}{|y|[(1+|y|)^2+|z|^2]^N
}\\
&&+O\Bigl(\frac{\la_j^2|\eta^j-\bar{\eta}^j|^2}{\la_j^{\gamma_j-1}}\Bigl)+O\Bigl(\frac{1}{\la_j^{\gamma_j-1+\sigma}}+\la_j|\eta^j-\bar{\eta}^j|^{\gamma_j+\sigma}\Bigl)
+O\Bigl(\frac{1}{\la_j^{N-1}}\Bigl)\\
&=&\frac{C_{N,k}(N-2)\gamma_j}{h\la_j^{\gamma_j-2}}(\eta^j_i-\bar{\eta}^j_i)a^j_i\int_{\N}\frac{|z|^{\gamma_j}dx}{|y|[(1+|y|)^2+|z|^2]^N
}\\
&&+O\Bigl(\frac{\la_j^2|\eta^j-\bar{\eta}^j|^2}{\la_j^{\gamma_j-1}}\Bigl)+O\Bigl(\frac{1}{\la_j^{\gamma_j-1+\sigma}}+\la_j|\eta^j-\bar{\eta}^j|^{\gamma_j+\sigma}\Bigl)
+O\Bigl(\frac{1}{\la_j^{N-1}}\Bigl).
\end{eqnarray*}
\end{proof}
\begin{lemma}\label{4.7}
Suppose $(\eta,\la)\in D_\mu$, $v\in E_{\eta,\la}$. If $\mu$ and
$\va$ are small, then
\begin{eqnarray*}
&&\Bigl|\int_{\N}\frac{1+\va K(x)}{|y|}\Bigl(\sum\limits_{j=1}^2\Uj\Bigl)^{\frac{2}{N-2}}\frac{\pa\Uj}
{\pa\eta^j_i}v dx\Bigl|\\
&=&O\Bigl(\la_j\va_{12}^{1/2+\tau}+\la_j\va\sum\limits_{j=1}^2
\Bigl(\frac{1}{\la_j^{\gamma_j}}+|\eta^j-\bar{\eta}^j|^{\gamma_j}\Bigl)\|v\|.
\end{eqnarray*}
\end{lemma}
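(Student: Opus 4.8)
The plan is to prove this estimate in the same way as Lemma~5.4, of which it is the $\eta^j$--derivative counterpart. The one structural point to bear in mind is the scaling: $|\pa\Uj/\pa\eta^j_i|\le C\la_j\Uj$, whereas $|\pa\Uj/\pa\la_j|\le C\la_j^{-1}\Uj$, so that every term acquires an extra $\la_j^{2}$ compared with Lemma~5.4, which is precisely why the right-hand side here carries $\la_j$ in place of $\la_i^{-1}$. Throughout, since $K$ is bounded, $1+\va K$ may be treated as $O(1)$.

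First I would reduce to a single bubble, replacing $\bigl(\sum_{l=1}^{2}U_{\eta^l,\la_l}\bigr)^{2/(N-2)}$ by $\Uj^{2/(N-2)}$; the difference is dominated by the cross term $U_{\eta^l,\la_l}^{2/(N-2)}$ with $l\neq j$ (or by $U_{\eta^l,\la_l}\,\Uj^{2/(N-2)-1}$ when $N=3$). Pairing it against $|\pa\Uj/\pa\eta^j_i|\le C\la_j\Uj$ and against $v$, then using H\"older in the measure $dx/|y|$ together with the Hardy--Sobolev inequality $\int_{\N}\frac{|v|^{\s}}{|y|}\,dx\le C\|v\|^{\s}$, reduces matters to the interaction integrals $\int_{\N}\frac{1}{|y|}\Ui^{\al}\Uj^{\be}\,dx$ with $\al+\be=\s$, which are $O(\va_{12}^{1+\tau})$ by Lemma~5.1. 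This yields the contribution $O(\la_j\va_{12}^{1/2+\tau})\|v\|$, the first term on the right-hand side.

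Next, it remains to handle $\int_{\N}\frac{1+\va K(x)}{|y|}\Uj^{2/(N-2)}\frac{\pa\Uj}{\pa\eta^j_i}v\,dx$. Differentiating the limiting equation \eqref{L} for $\Uj$ in $\eta^j_i$ gives $-\Delta\bigl(\pa\Uj/\pa\eta^j_i\bigr)=\frac{N}{N-2}\,\frac{\Uj^{2/(N-2)}}{|y|}\frac{\pa\Uj}{\pa\eta^j_i}$, hence $\frac{N}{N-2}\int_{\N}\frac{1}{|y|}\Uj^{2/(N-2)}\frac{\pa\Uj}{\pa\eta^j_i}w\,dx=\langle\pa\Uj/\pa\eta^j_i,w\rangle$ for every $w\in\D$. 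Writing $1+\va K(x)=\bigl(1+\va K(0,\bar{\eta}^j)\bigr)+\va\bigl(K(x)-K(0,\bar{\eta}^j)\bigr)$, the constant piece equals $\bigl(1+\va K(0,\bar{\eta}^j)\bigr)\frac{N-2}{N}\langle\pa\Uj/\pa\eta^j_i,v\rangle$, which vanishes since $v\in E_{\eta,\la}$. For the remaining term $\va\int_{\N}\bigl(K(x)-K(0,\bar{\eta}^j)\bigr)\frac{1}{|y|}\Uj^{2/(N-2)}\frac{\pa\Uj}{\pa\eta^j_i}v\,dx$, I split $\N=B_\delta(0,\bar{\eta}^j)\cup\bigl(\N\setminus B_\delta(0,\bar{\eta}^j)\bigr)$: away from $(0,\bar{\eta}^j)$, where $K$ is merely bounded, the concentration of $\Uj^{2/(N-2)}\,\pa\Uj/\pa\eta^j_i$ near $(0,\eta^j)\in B_\mu(\bar{\eta}^j)$ makes the contribution $O(\va\la_j^{-m})\|v\|$ for every fixed $m$, hence negligible; and on $B_\delta(0,\bar{\eta}^j)$ I insert the flatness expansion \eqref{K}. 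After rescaling $x=\la_j^{-1}\bar{x}+(0,\eta^j)$ the factor $K(x)-K(0,\bar{\eta}^j)$ becomes $O\bigl(\la_j^{-\gamma_j}(1+|\bar{x}|^{\gamma_j})+|\eta^j-\bar{\eta}^j|^{\gamma_j}+(\text{higher order})\bigr)$ while $\pa\Uj/\pa\eta^j_i$ supplies the extra power of $\la_j$; H\"older with the exponent $\frac{2(N-1)}{N}$ (dual to $\s$) against $dx/|y|$, followed by Hardy--Sobolev, reduces the bound to a rescaled weighted integral handled exactly as in Lemmas~5.3 and 5.6. Collecting the powers of $\la_j$ gives $O\bigl(\la_j\va\sum_{l=1}^{2}(\la_l^{-\gamma_l}+|\eta^l-\bar{\eta}^l|^{\gamma_l})\bigr)\|v\|$, the second term, and adding the three contributions completes the proof.

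The bulk of the work is routine, and the only delicate point is the last estimate: with the non-Sobolev H\"older exponent $\frac{2(N-1)}{N}$, one must verify that after the rescaling the integrand is integrable near $\bar{x}=0$, where the $|\bar{y}|^{-1}$ weight is felt, and decays fast enough as $|\bar{x}|\to\infty$ — the range imposed on $\gamma_j$ being exactly what secures this and keeps constants such as $b_4^j$ finite — and that taking the $\frac{N}{2(N-1)}$--th root leaves the net power of $\la_j$ equal to $1$, the counterpart of the power $-1$ in Lemma~5.4.
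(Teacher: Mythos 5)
Your proposal follows exactly the paper's route: the paper's proof is the one line ``similar to Lemma 5.4,'' and Lemma 5.4 is proved precisely by the three steps you describe --- reduction to a single bubble with an $O(\va_{12}^{1/2+\tau})\|v\|$ cross-term error, splitting $1+\va K$ so that the constant part pairs to $\langle \pa\Uj/\pa\eta^j_i, v\rangle=0$ by the orthogonality defining $E_{\eta,\la}$, and estimating the remainder via the flatness expansion of $K$ after rescaling. Your bookkeeping of the extra factor $\la_j^2$ relative to the $\la$-derivative case is also the correct adaptation, so the argument matches the paper's.
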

\begin{proof}
The proof is similar to Lemma 5.4.
\end{proof}
\begin{lemma}\label{4.8}
Suppose $(\eta,\la)\in D_\mu$, $v\in E_{\eta,\la}$. If $\mu$ and
$\va$ are small, then
\begin{eqnarray*}
&&\Bigl\langle\frac{\pa\Uj}{\pa\la_j},\,\frac{\pa\Ui}{\pa\la_i}\Bigl\rangle=\frac{A_1}{\la_i\la_j}\delta_{ij}
+\frac{C}{\la_i\la_j}\va_{ij}^{1+\tau},\\
&&\Bigl\langle\frac{\pa\Uj}{\pa\la_j},\,\frac{\pa\Ui}{\pa\eta^i_k}\Bigl\rangle=\left\{
\begin{array}{ll}
0,\,\,& i=j\vspace{0.2cm}\\
O(\la_i\va_{12}^{1+\tau}),&i\neq j,
\end{array}
\right.\\
&&\Bigl\langle\frac{\pa\Uj}{\pa\eta^j_l},\,\frac{\pa\Ui}{\pa\eta^i_k}\Bigl\rangle=\left\{
\begin{array}{ll}
A_2\la_i^2\delta_{lk}+O(\la^2_i\va_{12}^{1+\tau}),\,\,& i=j\vspace{0.2cm}\\
O(\la_i\la_j\va_{12}^{1+\tau}),&i\neq j,
\end{array}
\right.\\
 &&\Bigl\langle
v,\,\frac{\pa^2\Ui}{\pa\la_i^2}\Bigl\rangle=
O\Bigl(\ds\frac{\|v\|}{\la^2_i}\Bigl),\\
&&\Bigl\langle
v,\,\frac{\pa^2\Ui}{\pa\la_i\pa\eta^i_l}\Bigl\rangle= O(\|v\|),\\
&&\Bigl\langle
v,\,\frac{\pa^2\Ui}{\pa\eta^i_k\pa\eta^i_l}\Bigl\rangle=
O(\la_i^2\|v\|).
\end{eqnarray*}
\end{lemma}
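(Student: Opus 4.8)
The lemma is a list of bilinear and linear estimates among the derivatives of the bubbles, and the plan is to reduce each entry to a fixed, $(\eta,\la)$-independent profile in $\D$ by using the exact dilation--translation invariance of $U_{\eta,\la}$, and then to read off the powers of $\la$. Write $U_{\eta,\la}(x)=\la^{\frac{N-2}{2}}U_{0,1}(\la y,\la(z-\eta))$ and $T_{\eta,\la}x=(\la y,\la(z-\eta))$; differentiating gives
\[
\frac{\pa U_{\eta,\la}}{\pa\la}=\frac1\la\la^{\frac{N-2}{2}}\phi_0(T_{\eta,\la}x),\qquad
\frac{\pa U_{\eta,\la}}{\pa\eta_l}=-\la\,\la^{\frac{N-2}{2}}(\pa_{z_l}U_{0,1})(T_{\eta,\la}x),
\]
with $\phi_0(\xi)=\frac{N-2}{2}U_{0,1}(\xi)+\xi\cdot\nabla U_{0,1}(\xi)$; iterating, each further $\la$-derivative yields a new fixed profile with one extra factor $\la^{-1}$ and each further $\eta_l$-derivative a new fixed profile with one extra factor $\la$, all of them lying in $\D$ (they decay at least like $U_{0,1}$ and have square-integrable gradient). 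Changing variables $\xi=T_{\eta,\la}x$ in $\int_{\N}|\nabla(\cdot)|^2\,dx$ then gives directly $\|\pa_\la U_{\eta,\la}\|^2=A_1\la^{-2}$, $\|\pa_{\eta_l}U_{\eta,\la}\|^2=A_2\la^{2}$, and likewise $\|\pa^2_\la U_{\eta,\la}\|=O(\la^{-2})$, $\|\pa_\la\pa_{\eta_l}U_{\eta,\la}\|=O(1)$, $\|\pa_{\eta_k}\pa_{\eta_l}U_{\eta,\la}\|=O(\la^{2})$, with $A_1=\int_{\N}|\nabla\phi_0|^2$ and $A_2=\int_{\N}|\nabla\pa_{z_1}U_{0,1}|^2$ finite and positive ($A_2$ being independent of the chosen index by the rotational symmetry of $U_{0,1}$ in $z$).

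The last three identities then follow at once: for $v\in E_{\eta,\la}$ the orthogonality built into $E_{\eta,\la}$ concerns only the first-order profiles, so the second-order pairings are bounded by Cauchy--Schwarz, $|\langle v,\,\pa^2(\cdot)\Ui\rangle|\le\|v\|\,\|\pa^2(\cdot)\Ui\|$, and substituting the three norm bounds above produces exactly $O(\|v\|/\la_i^{2})$, $O(\|v\|)$ and $O(\la_i^{2}\|v\|)$.

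For the diagonal terms ($i=j$), the substitution $\xi=T_{\eta^j,\la_j}x$ reduces the three first-order inner products, up to explicit powers of $\la_j$, to $\int_{\N}|\nabla\phi_0|^2$, $\int_{\N}\nabla\phi_0\cdot\nabla\pa_{z_k}U_{0,1}$ and $\int_{\N}\nabla\pa_{z_l}U_{0,1}\cdot\nabla\pa_{z_k}U_{0,1}$. Since $U_{0,1}$ depends on $z$ only through $|z|$, $\phi_0$ is even in each $z_k$ while $\pa_{z_k}U_{0,1}$ is odd in $z_k$, so the mixed integral vanishes identically (the exact $0$ in the second formula), and, rewriting $\int_{\N}\nabla\pa_{z_l}U_{0,1}\cdot\nabla\pa_{z_k}U_{0,1}=\frac{N}{N-2}\int_{\N}|y|^{-1}U_{0,1}^{\frac{2}{N-2}}\pa_{z_l}U_{0,1}\pa_{z_k}U_{0,1}$ via \eqref{L}, the same parity in $z_k$ kills it for $l\ne k$, leaving $A_2\la_j^2$ for $l=k$ and $A_1/\la_j^2$ for the $\la$--$\la$ pairing. (When $i=j$ the terms involving $\va_{ij}$ are understood to be absent.)

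The genuinely technical point is the off-diagonal ($i\ne j$) interactions. Differentiating $-\Delta\Ui=|y|^{-1}\Ui^{\frac{N}{N-2}}$ (cf. \eqref{L}) gives $-\Delta\,\pa_p\Ui=\frac{N}{N-2}|y|^{-1}\Ui^{\frac{2}{N-2}}\pa_p\Ui$ for $p\in\{\la_i,\eta^i_k\}$, so each off-diagonal inner product equals a weighted interaction integral $\frac{N}{N-2}\int_{\N}|y|^{-1}\Ui^{\frac{2}{N-2}}\pa_p\Ui\,\pa_q\Uj\,dx$. These I would estimate as in the proof of Lemma 5.1 and estimate (F16) of \cite{Ba}: split $\R^N$ into a neighbourhood of the concentration point of $\Ui$, one of that of $\Uj$, and the far region, and on each piece use the pointwise bounds $|\pa_{\la_i}\Ui|\le C\la_i^{-1}\Ui$, $|\pa_{\eta^i_k}\Ui|\le C\la_i\Ui$ (and the analogues for $\Uj$) together with the integrability of $|y|^{-1}U_{0,1}^{\s}$, so that the overall factor $\la_i^{\mp1}$, resp. $\la_j^{\mp1}$, pulls out while the spatial integral delivers the interaction size measured by $\va_{ij}=(\la_i\la_j|\eta^i-\eta^j|^2)^{(2-N)/2}$; the gain to $\va_{ij}^{1+\tau}$ comes, as in Lemma 5.1, from the balanced powers of $\Ui,\Uj$ and the cancellation carried by $\phi_0$ over the interaction region. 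I expect this last step --- transplanting the region-splitting and decay bookkeeping of Lemma 5.1 from $U_{0,1}$ to the derivative profiles $\phi_0,\pa_{z_l}U_{0,1}$ --- to be the only real obstacle; the scaling identities, the Cauchy--Schwarz bounds and the parity arguments are routine.
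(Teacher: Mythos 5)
Your scaling identities, the Cauchy--Schwarz treatment of the pairings with $v$, and the parity arguments for the diagonal terms are all correct, and they are exactly what the paper's one-line proof (``similar to [Ba] and Lemma 5.1'') has in mind. The genuine gap sits precisely where you locate it, namely the extra factor $\va_{ij}^{\tau}$ in the off-diagonal ($i\neq j$) entries, and the mechanism you propose does not produce it. The crude pointwise bounds $|\pa_{\la_i}\Ui|\le C\la_i^{-1}\Ui$ and $|\pa_{\eta^i_k}\Ui|\le C\la_i\Ui$, fed into Lemma 5.1, only yield $O(\va_{ij}/(\la_i\la_j))$, $O(\la_i\la_j^{-1}\va_{ij})$ and $O(\la_i\la_j\va_{ij})$ --- no gain at all. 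Moreover there is no ``cancellation carried by $\phi_0$'': writing $\langle \pa_{\la_i}\Ui,\pa_{\la_j}\Uj\rangle=\pa_{\la_i}\pa_{\la_j}\langle \Ui,\Uj\rangle$, using $\langle \Ui,\Uj\rangle=C\va_{ij}+O(\va_{ij}^{1+\tau})$ with $C>0$ from Lemma 5.1, and noting $\pa_{\la_i}\pa_{\la_j}\va_{ij}=\tfrac{(N-2)^2}{4}\,\va_{ij}/(\la_i\la_j)$, one sees that the $\la$--$\la$ cross term is genuinely of size $\va_{ij}/(\la_i\la_j)$ with a nonzero constant, so the exponent $1+\tau$ stated in the lemma cannot be reached for that entry (this appears to be an overstatement in the lemma itself; only quasi-diagonality is used in Lemma 4.3, for which $O(\va_{ij}/(\la_i\la_j))$ suffices).

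For the entries involving $\eta$-derivatives the gain is real, but its source is different from the one you name: each differentiation of $\va_{ij}=(\la_i\la_j|\eta^i-\eta^j|^2)^{(2-N)/2}$ in $\eta^i_k$ costs a factor $|\eta^i-\eta^j|^{-1}=(\la_i\la_j)^{1/2}\va_{ij}^{1/(N-2)}$ rather than the factor $\la_i$ suggested by the pointwise bound, whence $\pa_{\eta^j_l}\pa_{\eta^i_k}\va_{ij}=O\bigl(\la_i\la_j\va_{ij}^{1+2/(N-2)}\bigr)$ and $\pa_{\la_j}\pa_{\eta^i_k}\va_{ij}=O\bigl((\la_i/\la_j)^{1/2}\va_{ij}^{1+1/(N-2)}\bigr)$; that is where $\va_{12}^{1+\tau}$ actually comes from. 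To make this rigorous you must differentiate the explicit leading interaction term of Lemma 5.1 (and verify that its $O(\va_{ij}^{1+\tau})$ remainder is stable under differentiation, which requires rerunning the region-splitting on the differentiated integrand), rather than bounding $|\pa_{\eta^i_k}\Ui|$ by $C\la_i\Ui$ and invoking Lemma 5.1 as a black box.
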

\begin{proof}
The proof is similar to those of \cite{Ba} and Lemma 5.1.
\end{proof}
\begin{lemma}
Let $b_1$, $b_2^j$ and $b_3^j$ be defined as in Lemmas 5.2 and
5.3. Then,
\begin{eqnarray*}
b_1&=&-\frac{[k^2-2+k(h-1)+h]N\omega_h\omega_k}{2k(k+1)}\int_0^{+\infty}\frac{s^{k-2}ds}{(1+s)^{N-h}}
\int_0^{+\infty} \frac{t^{h-1}dt}{(1+t^2)^{\frac{N+2}{2}}}.\\
b_2^j&=&-\frac{(2N+2k-2)\omega_h\omega_k}{(2N-h-1)(2N-h-2)}\int_0^{+\infty}\frac{s^{\gamma_j+k-2}ds}{(1+s)^{2N-h-2}}
\int_0^{+\infty} \frac{t^{h-1}dt}{(1+t^2)^{N}}.\\
b_3^j&=&-\frac{2\gamma_j\omega_h\omega_k}{N-\gamma_j+k-2}\int_0^{+\infty}\frac{s^{k-2}ds}{(1+s)^{N-\gamma_j+k-2}}
\int_0^{+\infty} \frac{t^{\gamma_j+h-1}dt}{(1+t^2)^{N}}.
\end{eqnarray*}
Hence, $b_1<0,\,\,b_2^j<0$ and $b_3^j<0$.
\end{lemma}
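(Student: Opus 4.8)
The plan is to compute the three integrals of Lemmas 5.2 and 5.3 in closed form and then read off their signs. Write $x=(y,z)\in\R^k\times\R^h$, $s=|y|$, $t=|z|$, so that $|x|^2=s^2+t^2$. Each integrand depends on $x$ only through $s$ and $t$, so separating the angular variables in $y$ and in $z$ gives $\int_{\N}F(|y|,|z|)\,dx=\omega_k\omega_h\int_0^{+\infty}\!\int_0^{+\infty}F(s,t)\,s^{k-1}t^{h-1}\,ds\,dt$. Applied to $b_1$, $b_2^j$, $b_3^j$ this presents each of them as $\omega_k\omega_h$ times a double integral over the first quadrant whose denominator is $[(1+s)^2+t^2]^p$ — with $p=\frac{N+2}{2}$ for $b_1$ and $p=N$ for $b_2^j,b_3^j$ — and whose numerator is a constant times a monomial in $s,t$ times $(1-s^2-t^2)$. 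Under $k\ge 2$, $h\ge 1$, $N>3$ and $\gamma_j<N-1$ all these integrals converge absolutely, both near $s=0$ (where the weight $1/|y|$ present in $b_1$ and $b_3^j$ costs one power of $s$) and at infinity.

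The main device is the change of variable $t=(1+s)\tau$. Since $(1+s)^2+t^2=(1+s)^2(1+\tau^2)$, the denominator then splits into a power of $(1+s)$ times a power of $(1+\tau^2)$, and the $s$- and $\tau$-integrations decouple except through the term $t^2=(1+s)^2\tau^2$ hidden inside $1-s^2-t^2$. To remove that coupling, write $1-s^2=(1-s)(1+s)$ and then $1-s=2-(1+s)$; afterwards every factor of $s$ in the numerator is a power of $(1+s)$ that merely shifts the exponent in the denominator. Hence $b_1/(\omega_k\omega_h)$, $b_2^j/(\omega_k\omega_h)$, $b_3^j/(\omega_k\omega_h)$ each becomes a linear combination, with only the numerical coefficients $2$ and $-1$, of at most three products
\[
\Bigl(\int_0^{+\infty}\frac{s^{\alpha}}{(1+s)^{\beta}}\,ds\Bigr)\Bigl(\int_0^{+\infty}\frac{\tau^{\mu}}{(1+\tau^2)^{p}}\,d\tau\Bigr),
\]
in which the three $\beta$'s are consecutive integers and the $\mu$'s run over $h-1,h+1$ (for $b_1,b_2^j$) or $\gamma_j+h-1,\gamma_j+h+1$ (for $b_3^j$).

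It remains to collapse this combination to one product. For that one needs only $\int_0^{+\infty}s^{\alpha}(1+s)^{-\beta}\,ds=B(\alpha+1,\beta-\alpha-1)$ together with the recursion $B(a,b+1)=\frac{b}{a+b}B(a,b)$, and the identity $\int_0^{+\infty}\tau^{\mu+2}(1+\tau^2)^{-p}\,d\tau=\int_0^{+\infty}\tau^{\mu}(1+\tau^2)^{-(p-1)}\,d\tau-\int_0^{+\infty}\tau^{\mu}(1+\tau^2)^{-p}\,d\tau$, which is again a Beta relation after $u=\tau^2$. Rewriting the three products over one common product and simplifying the scalar factor — this is the step where $N=k+h$ enters and where the several terms cancel against each other — yields the three closed forms asserted in the statement, in each case a product of two one-dimensional integrals times a numerical prefactor. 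In each such formula the two surviving one-dimensional integrals have positive integrands and exponents in the convergent range, hence are positive, while the displayed prefactors are plainly negative for $k\ge2$, $h\ge1$, $N>3$ and $\gamma_j<N-1$; therefore $b_1<0$, $b_2^j<0$, $b_3^j<0$.

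I expect the only real obstacle to be this last computation: getting the Beta-function bookkeeping to land on precisely the claimed constants, and checking at every manipulation that the exponents keep each intermediate integral $\int_0^{+\infty}s^{\alpha}(1+s)^{-\beta}\,ds$ convergent (at $s=0$ this requires $k\ge2$ for $b_1,b_3^j$ and $\gamma_j+k>1$ for $b_2^j$; at infinity it requires $\gamma_j<N-1$, supplied by the hypothesis on $\gamma_j$). The polar-coordinate reduction and the substitution $t=(1+s)\tau$ are routine, and once the closed forms are in hand the sign assertions are immediate.
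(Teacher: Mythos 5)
Your proposal is correct and follows essentially the same route as the paper's proof: polar coordinates in $y$ and $z$, the substitution $t=(1+s)\bar t$ splitting the denominator, and then Beta-type recursions (the paper's identities (5.1)--(5.3)) to collapse the resulting three products into a single one with a manifestly negative prefactor. The paper likewise carries out the arithmetic only for $b_3^j$ and declares $b_1,b_2^j$ ``similar,'' so your deferral of the final bookkeeping matches the level of detail actually given there.
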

\begin{proof}
We only prove  the estimate for $b_3^j$ since the estimates for
$b_1$ and $b_2^j$ are similar.

Firstly, it is easy to check that
\begin{eqnarray}
&&\int_0^{+\infty}
\frac{s^mds}{(1+s)^{n+1}}=\frac{n-m-1}{n}\int_0^{+\infty}
\frac{s^mds}{(1+s)^{n}},\,\,\,\,\forall \,0<m<n-1,\\
&&\int_0^{+\infty}
\frac{s^{m+1}ds}{(1+s)^{n+1}}=\frac{m+1}{n-m-1}\int_0^{+\infty}
\frac{s^mds}{(1+s)^{n}},\,\,\,\,\forall \,0<m<n-1,\\
&&\int_0^{+\infty}
\frac{t^{m-2}dt}{(1+t^2)^n}=\frac{2n-m-1}{2(n-1)}\int_0^{+\infty}
\frac{t^{m-2}dt}{(1+t^2)^{n-1}},\,\,\,\,\forall \,0<m<2n-1.
\end{eqnarray}
 Changing to polar coordinates
and using the change of variable $\bar{t}=\frac{t}{1+s}$, we can
find that
\begin{eqnarray*}
\frac{b_3^j}{\omega_h\omega_k}&=&\int_0^{+\infty}\int_0^{+\infty}\frac{s^{k-1}t^{\gamma_j+h-1}(1-s^2-t^2)}{s[(1+s)^2+t^2]^N}dsdt\\
&=&\int_0^{+\infty}\frac{s^{k-2}ds}{(1+s)^{2N-\gamma_j-h}}\int_0^{+\infty}\frac{t^{\gamma_j+h-1}dt}
{(1+t^2)^{N}}\\
&&-\int_0^{+\infty}\frac{s^{k}ds}{(1+s)^{2N-\gamma_j-h}}\int_0^{+\infty}
\frac{t^{\gamma_j+h-1}dt}{(1+t^2)^{N}}\\
&&-\int_0^{+\infty}\frac{s^{k-2}ds}{(1+s)^{2N-\gamma_j-h-2}}\int_0^{+\infty}\frac{t^{\gamma_j+h+1}dt}
{(1+t^2)^{N}}.
\end{eqnarray*}
Inserting (5.1)-(5.3) into the above equation, we see
\begin{eqnarray*}
\frac{b_3^j}{\omega_h\omega_k}&=&\Bigl[\int_0^{+\infty}\frac{s^{k-2}ds}{(1+s)^{2N-\gamma_j-h}}-
\int_0^{+\infty}\frac{s^{k}ds}{(1+s)^{2N-\gamma_j-h}}\\
&&-\frac{\gamma_j+h}{2N-\gamma_j-h-2}
\int_0^{+\infty}\frac{s^{k-2}ds}{(1+s)^{2N-\gamma_j-h-2}}\Bigl]\int_0^{+\infty}\frac{t^{\gamma_j+h-1}dt}{(1+t^2)^{N}}\\
&=&-\frac{2\gamma_j}{N-\gamma_j+k-2}\int_0^{+\infty}\frac{s^{k-2}ds}{(1+s)^{N-\gamma_j+k-2}}
\int_0^{+\infty} \frac{t^{\gamma_j+h-1}dt}{(1+t^2)^{N}}\\
 &<& 0.
\end{eqnarray*}
\end{proof}

\noindent{\bf Acknowledgements}:\,The first and the second author
were supported by the grant from NSFC(1063103). The first author
was also supported by Science Fund for Creative Research Groups of
NSFC(10721101) and CAS grant KJCX3- SYW-S03. The second author was
also supported by NCET(07-0350) and the Key Project of Chinese
Ministry of Education(107081). The third author was supported by
ARC of Australia.


\begin{thebibliography}{99}
\bibitem{Am1}A. Ambrosetti and M. Badiale, Variational perturbative
methods and bifurcation of bound states from the essential
spectrum, Proc. R. Soc. Edinb. Sect. A., Math., 128(1998),
1131-1161.

\bibitem{Am2} A. Ambrosetti and M. Badiale, Homoclinics:
Poincar\`{e}-Melnikov type results via a variational approach, Ann.
Inst. Henri Poincar\`{e}, Anal. Non Lin\`{e}aire, 15(1998), 233-252.

\bibitem{Am3}A. Ambrosetti, J. Garcia Azorero, and I. Peral,
Perturbation of $\Delta u+u^{(N+2)/(N-2)}=0$, the scalar curvature
problem on $\R^N$, and related topics, J. Funct. Anal., 165(1999),
117-149.

\bibitem{Ba}  A. Bahri, Critical points at infinity in some
variational problems, Longman Scientific \& Technical, 1989.

\bibitem{Bi1} I. Birindelli, A. Capuzzo Dolcetta, A. Cutr\`{i},
Liouville theorems for semilinear equations on the Heisenberg group,
Ann. Inst. Henri Poincar\`{e}, Anal. Non Lin\`{e}aire, 14(1997),
295-308.

\bibitem{Bi2}I. Birindelli, A. Capuzzo Dolcetta, A. Cutr\`{i},
Indefinite semi-linear equations on the Heisenberg group: a priori
bounds and existence, Comm. Partial Differential Equations,
23(1998), 1123-1157.

\bibitem{Ca} D. Castorina, I. Fabbri, G.
Mancini and K. Sandeep, Hardy-Sobolev inequalities, hyperbolic
symmatry and the Webster scalar curvature problem, preprint.

\bibitem{Ci} G. Citti, Semilinear Dirichlet problem involving
critical exponent for the Kohn Laplacian, Ann. Mat. Pura Appl.,
169(1995), 375-392.


\bibitem{CNY3}  D. Cao, E. S. Noussair and S. Yan, On the scalar
curvature equation $-\Delta u=(1+\va K(x))u^{(N+2)/(N-2)}$ in
$\R^N$, Calc. Var. PDE., 15(2002), 403-419.

\bibitem{Ga1} N. Gamara, The CR Yamabe conjecture -- the case $n=1$,
J. Eur. Math. Soc., 3(2001), 105-137.

\bibitem{Ga2}N. Gamara and R. Yacoub,  CR Yamabe conjecture --
the conformally flat case, Pacific J. Math., 201(2001), 121-175.

\bibitem{Gar}N. Garofalo and E. Lanconelli, Existence and
nonexistence results for semilinear equations on the Heisenberg
Group, Indiana Univ. Math. J., 41(1992), 71-98.

\bibitem{Fe} V. Felli and F. Uguzzoni, Some existence results for
the Webster scalar curvature prblem in the presence of symmetry,
Ann. Mat. Pura Appl., 183(2004), 469-493.

\bibitem{Je} D. Jerison and J. M. Lee, Yamabe problem on CR
manifolds, J. Differ. Geom., 25(1987), 167-197.

\bibitem{Je2} D. Jerison and J. M. Lee, Intrinsic CR normal
coordinates  and the CR Yamabe problem, J. Differ. Geom., 29(1989),
303-343.

\bibitem{Je3} D. Jerison and J. M. Lee, Extremals for the Sobolev inequality on the Heisenberg group the
 Yamabe problem, J. Amer. Math.
 Soc., 1(1988), 1-13.
303-343.

\bibitem{La} E. Lanconelli and F. Uguzzoni, Non-existence results
for semilinear Kohn-Laplace equations in unbounded domains, Comm.
Partial Differential Equations, 25(2000), 1703-1739.

\bibitem{Li}G. Li, S. Yan and J. Yang, The Lazer-McKenna conjecture
for an elliptic problem with critical growth, J. Differential
Equations, 28(2007), 471-508.

\bibitem{Ma} A. Malchiodi and F. Uguzzoni, A perturbation result for
the Webster scalar curvature problem on the CR sphere, J. Math.
Pures Appl., 81(2002), 983-997.

\bibitem{Rey1}  O. Rey, The role of the Green's function in a non-linear
elliptic equation involving critical Sobolev exponent, J. Funct.
Anal., 89(1990), 1-52.

\bibitem{Ug} F. Uguzzoni, A non-existence theorem for a semilinear
Dirichlet problem involving critical exponent on halfspaces of the
Heisenberg group, NoDEA Nonlinear Differential Equations Appl.,
6(1999), 191-206.

\bibitem{Ya} S. Yan, Concentration of solutions for the scalar
curvature equation on $\R^N$, J. Differential Equations,
163(2000), 239-264.


\end{thebibliography}
\end{document}

